\theoremstyle{definition}
\newtheorem{theorem}{Theorem}[section]
\newtheorem{corollary}[theorem]{Corollary}
\newtheorem{conjecture}[theorem]{Conjecture}
\newtheorem*{theorem*}{Theorem}
\newtheorem*{conjecture*}{Conjecture}
\newtheorem*{question*}{Question}
\newtheorem*{lemma*}{Lemma}
\newtheorem*{corollary*}{Corollary}
\newtheorem*{definition*}{Definition}
\newtheorem*{example*}{Example}
\DeclareMathOperator{\lcm}{lcm}
\author{Brennan Benfield and Oliver Lippard}
\address{Brennan Benfield: Department of Mathematics and Statistics, University of North Carolina at Charlotte, 9201 University City Blvd., Charlotte, NC 28223, USA}
\address{Oliver Lippard: Department of Mathematics and Statistics, University of North Carolina at Charlotte, 9201 University City Blvd., Charlotte, NC 28223, USA}
\title{Fixed Points of $K$-Fibonacci Sequences}
\begin{document}

\begin{abstract}
    A $K$-Fibonacci sequence is a binary recurrence sequence where $F_0=0$, $F_1=1$, and $F_n=K\cdot F_{n-1}+F_{n-2}$. These sequences are known to be periodic modulo every positive integer greater than $1$. If the length of one shortest period of a $K$-Fibonacci sequence modulo a positive integer is equal to the modulus, then that positive integer is called a \textit{fixed point}. This paper determines the fixed points of $K$-Fibonacci sequences according to the factorization of $K^2+4$ and concludes that if this process is iterated, then every modulus greater than 3 eventually terminates at a fixed point. 
\end{abstract}
\maketitle
\smallskip
\noindent \textbf{Mathematics Subject Classifications.} 11B39, 11B50\\
\smallskip\smallskip
\noindent \textbf{Keywords.} Fibonacci sequence, Pisano period, fixed points

The results of this paper were presented at the 21\textsuperscript{st} International Fibonacci Conference, hosted at Harvey Mudd College, July 8, 2024.

\section{Introduction}
Perhaps the most popular sequence of all, the Fibonacci sequence is a binary recurrence defined by $F_0=0$, $F_1=1$, and $F_n=F_{n-1}+F_{n-2}$ and has been studied so thoroughly that discovering new properties at all is almost as surprising as the property discovered. The focus of this paper is a particular property that was first recognized in 1877 by Lagrange \cite{[Lagrange]}: the terms in the Fibonacci sequence modulo $10$ (i.e. the one's place digits) repeat every $60$ terms. Inquiry regarding the periodicity of the sequence modulo a positive integer has continued and in 2003 it was proven by Everest, Shparlinski, and Ward \cite{[Everest]} that \textit{every} binary recurrence sequence is periodic modulo a positive integer $m>1$. 

Define the \textit{Pisano period} as the length of one (shortest) period of the Fibonacci sequence modulo $m$. The Pisano period is denoted $\pi(m)$ and can be recursively applied: denote $\pi(\pi(m))~=~\pi^2(m)$ and $\pi(\pi^i(m))=\pi^{i+1}(m)$ for $n>2$. The \textit{trajectory} of an integer $m$ is the set of integers given by recursively applying the Pisano period to $m$. How long can the trajectory of a starting integer $m$ be? 

A \textit{fixed point} is a positive integer $m$ such that $\pi(m)=m$. A \textit{$k$-periodic point} is a $k$-tuple of positive integers such that $\pi(m)=\pi^2(m)=\ldots=\pi^k(m)=m$. In 1969, Fulton and Morris \cite{[Fulton_Morris]} determined the fixed points of the Fibonacci sequence and proved that the trajectory of every integer will eventually terminate at a fixed point. In 2020, Trojovsk\'a \cite{[Trojovska]} showed that there are no $k$-periodic points in the Fibonacci sequence. 
\begin{theorem}[\textit{Fixed Point Theorem}, Fulton and Morris \cite{[Fulton_Morris]}]\label{Fib_Fixed_Point}
For and integer $m>1$, $\pi(m)=m$ if and only if $m=24\times5^{k-1}$. 
\end{theorem}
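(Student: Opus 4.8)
The plan is to prove the two directions separately, using throughout two classical structural properties of the Pisano period: it is multiplicative over coprime arguments, $\pi(ab)=\lcm(\pi(a),\pi(b))$ when $\gcd(a,b)=1$, and it respects divisibility, so that $d\mid m$ implies $\pi(d)\mid\pi(m)$ (any period modulo $m$ is automatically a period modulo $d$). That each $m=24\times 5^{k-1}$ is a fixed point is then a short computation: write $24\times 5^{k-1}=2^{3}\cdot 3\cdot 5^{k-1}$ and use $\pi(8)=12$, $\pi(3)=8$, and $\pi(5^{k-1})=4\cdot 5^{k-1}$ for $k\ge 2$ (with $\pi(24)=24$ directly when $k=1$) to obtain $\pi(24\times 5^{k-1})=\lcm(12,8,4\cdot 5^{k-1})=2^{3}\cdot 3\cdot 5^{k-1}$.

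For the converse, suppose $\pi(m)=m$ with $m>1$; I would first pin down the prime factors that $m$ is forced to have. The Fibonacci matrix $\bigl(\begin{smallmatrix}1 & 1\\ 1 & 0\end{smallmatrix}\bigr)$ has determinant $-1$, so its reduction modulo any $m>2$ has even order, which shows $\pi(m)$ is even for every $m>2$; since also $\pi(2)=3\ne 2$, no odd integer $>1$ is a fixed point, so $m$ is even. Then $2\mid m$ forces $3=\pi(2)\mid\pi(m)=m$, and next $3\mid m$ forces $8=\pi(3)\mid\pi(m)=m$, so $8\mid m$. Write $m=2^{a}3^{d}5^{b}\prod_{i}p_{i}^{c_{i}}$ with $a\ge 3$, $d\ge 1$, $b\ge 0$ and the $p_{i}\ge 7$ distinct, and recall the classical evaluations $\pi(2^{a})=3\cdot 2^{a-1}$, $\pi(3^{d})=8\cdot 3^{d-1}$, and $\pi(5^{b})=4\cdot 5^{b}$ (for $b\ge 1$), together with the facts that for a prime $p\ge 7$ the value $\pi(p^{c})$ is $\pi(p)$ times a power of $p$ with $p\nmid\pi(p)$, and that $\pi(p)\mid p-1$ when $p\equiv\pm 1\pmod 5$ while $\pi(p)\mid 2(p+1)$ when $p\equiv\pm 2\pmod 5$. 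Expanding $\pi(m)$ as the lcm of these pieces, one then compares $q$-adic valuations of the two sides of $\pi(m)=m$, prime by prime.

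The heart of the argument is this valuation count. Fix a prime $p=p_{j}\ge 7$ dividing $m$ and set $c=v_{p}(m)\ge 1$. In the lcm representing $\pi(m)$, the pieces attached to $2$, $3$, and $5$ carry no factor of $p$, the piece $\pi(p^{c})$ carries $p$-adic valuation at most $c-1$, and every remaining piece $\pi(p_{i}^{c_{i}})$ contributes $v_{p}(\pi(p_{i}))$; since $v_{p}(\pi(m))=c$, some $p_{i}\ge 7$ with $i\ne j$ must satisfy $p\mid\pi(p_{i})$. Invoking the mod-$5$ dichotomy above together with the fact that $p$ and $p_{i}$ are odd, $p\mid\pi(p_{i})$ forces $p<p_{i}$. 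Iterating produces an infinite strictly increasing chain of prime divisors of $m$, which is impossible; hence $m$ has no prime factor $\ge 7$, i.e.\ $m=2^{a}3^{d}5^{b}$. Finally, comparing $2$-adic valuations forces $\max(a-1,3)=a$, hence $a=3$; comparing $3$-adic valuations forces $\max(1,d-1)=d$, hence $d=1$; and the $5$-adic comparison holds automatically, leaving $b\ge 0$ unconstrained. Thus $m=24\cdot 5^{b}=24\times 5^{k-1}$, which completes the characterization.

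I expect the main obstacle to be the infinite-ascent step: one must check carefully that the ``source'' of the factor $p$ cannot be any piece attached to $2$, $3$, or $5$ (so that it must be a genuine prime factor $\ge 7$), and that $p<p_{i}$ holds strictly in both residue-class cases, including the parity subtlety when $\pi(p_{i})\mid 2(p_{i}+1)$. A minor point worth recording is that one never actually needs the precise identity $\pi(p^{c})=p^{c-1}\pi(p)$ --- whose failure would exhibit a Wall--Sun--Sun prime --- since the bookkeeping only uses $v_{p}(\pi(p^{c}))\le c-1$, which is unconditional; so the argument is untouched by that open problem.
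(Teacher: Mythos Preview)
Your proof is correct and follows essentially the same route as the paper's own treatment of this result (the paper cites Fulton--Morris for the statement itself, but its Theorems~3.1--3.2 with $K=1$ prove it): the lcm decomposition, the explicit values of $\pi$ on powers of $2$, $3$, and $5$, elimination of primes $\ge 7$ via the lemma that every prime factor of $\pi(p)$ is strictly smaller than $p$ when $p\nmid K^2+4$, and a final $2$- and $3$-adic valuation comparison. Your infinite-ascent packaging of the elimination step is equivalent to (and cleaner than) the paper's implicit ``largest $q_\lambda$'' argument, and your closing remark that only the unconditional bound $v_p(\pi(p^c))\le c-1$ is needed is a genuine improvement, since the paper's proof asserts that the $q_\lambda$ are not $K$-Wall--Sun--Sun primes without justification.
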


\begin{theorem}(\textit{Iteration Theorem}, Fulton and Morris \cite{[Fulton_Morris]})\label{Fib_Iteration}
For every integer $m>1$, there exists a least integer $j$ such that $\pi^{j+1}(m)=\pi^j(m)$.
\end{theorem}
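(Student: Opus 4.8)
The plan is to prove the statement by showing that $\pi$, viewed as a self-map of $\{m\in\mathbb Z:m>1\}$, drives every point into the family of fixed points $\{24\cdot 5^{k-1}:k\ge 1\}$ furnished by Theorem~\ref{Fib_Fixed_Point}; once every trajectory is eventually constant, the set of $j$ with $\pi^{j+1}(m)=\pi^j(m)$ is a nonempty set of nonnegative integers and hence has a least element. The tools I would assemble first are standard facts about Pisano periods: the multiplicativity $\pi(mn)=\lcm\bigl(\pi(m),\pi(n)\bigr)$ for $\gcd(m,n)=1$ (Chinese Remainder Theorem on the pair $(F_{n-1},F_n)$); divisibility monotonicity, $d\mid m\Rightarrow\pi(d)\mid\pi(m)$; the explicit values $\pi(2^a)=3\cdot 2^{a-1}$, $\pi(3^b)=8\cdot 3^{b-1}$, $\pi(5^c)=4\cdot 5^c$; the classical fact that $\pi(m)$ is even for $m>2$; and, for a prime $p\ge 7$, the divisibilities $\pi(p)\mid 4\bigl(p-\left(\tfrac p5\right)\bigr)$ and $\pi(p^e)\mid p^{e-1}\pi(p)$. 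Writing $v_p$ for the $p$-adic valuation and $P(m)$ for the largest prime factor of $m$, the last two relations give, for $p\ge 7$, that $p\nmid\pi(p)$ and $P(\pi(p))<p$, because $p\mp 1$ is then a composite even number and so has largest prime factor at most $(p+1)/2<p$.

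\emph{Phase 1 (absorption into an invariant set).} Since $\pi(2)=3$ and $\pi(3)=8$, divisibility monotonicity together with evenness of $\pi(m)$ for $m>2$ lets one chase, in at most four steps, into $\mathcal S:=\{m:24\mid m\}$: first the orbit becomes even and larger than $2$, then divisible by $6$, then divisible by $24$. Moreover $\pi(24)=\lcm(12,8)=24$, so $24\mid m\Rightarrow 24\mid\pi(m)$ and $\mathcal S$ is forward-invariant; it therefore suffices to study orbits of $m=2^a3^b5^c r$ with $a\ge3$, $b\ge1$, $c\ge0$, $\gcd(r,30)=1$. \emph{Phase 2 (elimination of primes $\ge 7$).} For such $m$ with $p:=P(m)\ge 7$, write $m=p^e m_0$ with $p\nmid m_0$, so $\pi(m)=\lcm\bigl(\pi(p^e),\pi(m_0)\bigr)$. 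Inspecting the prime-power factors of $m_0$ individually — the explicit values for $2,3,5$ and the bound $P(\pi(q))<q$ for primes $7\le q<p$ — shows every prime factor of $\pi(m_0)$ is $<p$; meanwhile $\pi(p^e)\mid p^{e-1}\pi(p)$ with $p\nmid\pi(p)$ gives $P(\pi(p^e))\le p$ and $v_p(\pi(p^e))\le e-1$. Consequently $\bigl(P(m),v_{P(m)}(m)\bigr)$ strictly decreases in the lexicographic order of $\mathbb N^2$ at each step for which $P(m)\ge 7$, so by well-ordering the orbit reaches, in finitely many steps, a point with $P(m)\le 5$, necessarily of the form $m=2^a3^b5^c$ with $a\ge3$, $b\ge1$.

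\emph{Phase 3 (collapse of exponents).} For $m=2^a3^b5^c$ with $a\ge3$, $b\ge1$, the $\lcm$ formula yields $\pi(m)=2^{\max(a-1,3)}\,3^{\max(b-1,1)}\,5^c$; hence $v_2$ decreases weakly toward $3$, $v_3$ decreases weakly toward $1$, and $v_5$ is fixed, so after finitely many more steps the orbit reaches $2^3\cdot 3\cdot 5^c=24\cdot 5^c$, a fixed point by Theorem~\ref{Fib_Fixed_Point}. Chaining the three phases shows that the orbit of any $m>1$ is eventually constant, which is exactly what is needed.

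The step I expect to be the crux is Phase 2, and specifically the choice of the right monotone quantity. One cannot induct on $P(m)$ directly, because $\pi$ can \emph{increase} the largest prime factor at the low end — e.g.\ $\pi(2^a)=3\cdot 2^{a-1}$ sends a largest prime factor of $2$ to $3$ — so the argument must first neutralize the small primes $2$ and $3$, which is the whole purpose of Phase~1; after that, the only way $\pi$ can fail to shrink $P(m)$ is for $m$ to carry a factor $p^e$ with $e\ge2$, and then $v_p$ strictly drops instead. Making this rigorous is a matter of verifying the two prime-level inputs, $\pi(p)\mid 4\bigl(p-\left(\tfrac p5\right)\bigr)$ (whence $P(\pi(p))<p$ for $p\ge7$) and $\pi(p^e)\mid p^{e-1}\pi(p)$, and then carrying out the bookkeeping over all prime powers dividing $m_0$ carefully enough to conclude $P(\pi(m_0))<p$.
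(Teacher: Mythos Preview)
Your proof is correct. The paper does not itself prove Theorem~\ref{Fib_Iteration}---it is quoted from Fulton and Morris---so the relevant comparison is with the $K=1$ instance of the $K$-Iteration Theorem, handled in Section~4 via Theorems~\ref{Iteration1}, \ref{Iteration2}, and~\ref{iteration-1mod6}. The architecture matches: absorb the orbit into the forward-invariant set $\{m:24\mid m\}$, eliminate primes not dividing $K^2+4=5$, observe that the $5$-part persists, and read off the fixed point. The substantive difference is your Phase~2. The paper's elimination lemma (Theorem~\ref{Iteration2}) treats one prime $q$ in isolation, tracking $v_q$ under iteration of $\pi$ starting from $q^\theta$, and then invokes this wholesale inside Theorem~\ref{iteration-1mod6} by declaring that each such $q_i$ ``vanishes for some sufficiently large $N$''. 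Your lexicographic descent on $\bigl(P(m),v_{P(m)}(m)\bigr)$ works instead with the full modulus and makes explicit why the cascade of smaller primes introduced by $\pi(p)$ can never reinject $p$. This buys more than tidiness: as literally stated, Theorem~\ref{Iteration2} is false for $q\in\{2,3\}$ when $K=1$ (for instance $2\mid\pi^n(2)$ for every $n\ge 2$), and the paper's application of it to the full $m$ does not spell out why the other prime-power constituents of $m$ cannot resupply a factor of $q$; your well-ordering argument closes both gaps by design. Your Phase~3, with the explicit computation that the $2$- and $3$-exponents collapse to $3$ and $1$, is likewise made precise where the paper's proof of Theorem~\ref{iteration-1mod6} leaves it implicit.
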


\begin{example*}
\begin{center}
   \text{ Trajectories of the integers in the Fibonacci sequence for $m\varleq24$.}
\end{center}
\begin{align*}
&\pi(2)=3 \rightarrow \pi(3)=8 \rightarrow \pi(8)=12 \rightarrow \pi(12)=24 \qquad &\pi(13)=28 \rightarrow \pi(28)=48 \rightarrow \pi(48)=24\\
&\pi(3)=8 \rightarrow \pi(8)=12 \rightarrow \pi(12)=24 \qquad &\pi(14)=48 \rightarrow \pi(48)=24\\
&\pi(4)=6 \rightarrow \pi(6)=24 \qquad &\pi(15)=40 \rightarrow \pi(40)=60 \rightarrow \pi(60)=120\\
&\pi(5)=20 \rightarrow \pi(20)=60 \rightarrow \pi(60)=120 \qquad &\pi(16)=24\\
&\pi(6)=24 \qquad &\pi(17)=36 \rightarrow \pi(36)=24\\
&\pi(7)=16 \rightarrow \pi(16)=24 \qquad &\pi(18)=24\\
&\pi(8)=12 \rightarrow \pi(12)=24 \qquad &\pi(19)=18 \rightarrow \pi(18)=24\\
&\pi(9)=24 \qquad &\pi(20)=60 \rightarrow \pi(60)=120\\
&\pi(10)=60 \rightarrow \pi(60)=120 \qquad &\pi(21)=16 \rightarrow \pi(16)=24\\
&\pi(11)=10 \rightarrow \pi(10)=60 \rightarrow \pi(60)=120 \qquad &\pi(22)=30 \rightarrow \pi(30)=120\\
&\pi(12)=24 \qquad &\pi(23)=48 \rightarrow \pi(48)=24\\
&&\pi(24)=24
\end{align*}
\end{example*}

Let $\mathcal{T}(m)$ denote the number of iterations of the Pisano period of $m$ until the trajectory reaches a fixed point. For example, $\mathcal{T}(2)=4$ and $\mathcal{T}(24)=0$.
\begin{theorem}\label{trajectory}
    $\limsup \frac{\mathcal{T}(m)}{\log m} < \infty$.
\end{theorem}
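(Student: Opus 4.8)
The plan is to establish the stronger quantitative statement $\mathcal{T}(m) = O(\log m)$, from which the theorem is immediate. For a positive integer $n$ write $L(n) = \prod_{p \geq 7} p^{v_p(n)}$ for its ``large part'', the part supported on primes at least $7$. The heart of the argument is a contraction lemma: if $L(n) > 1$ then $L(\pi(n)) \leq \tfrac{4}{7} L(n)$, and if $L(n) = 1$ then $L(\pi(n)) = 1$. I would prove this from four standard facts about the Pisano period: (i) $\pi(\lcm(a,b)) = \lcm(\pi(a),\pi(b))$, so that $\pi(n) = \lcm_{p^{a_p} \| n} \pi(p^{a_p})$; (ii) $\pi(p^{a}) \mid p^{a-1}\pi(p)$ for every prime power (the elementary direction of Wall's relation, obtained by expanding $(I+pA)^{p^{a-1}}$ where $A$ is the error matrix of $\left(\begin{smallmatrix}1&1\\1&0\end{smallmatrix}\right)^{\pi(p)}$); (iii) $\pi(p) \mid p-1$ if $p \equiv \pm 1 \pmod 5$ and $\pi(p) \mid 2(p+1)$ if $p \equiv \pm 2 \pmod 5$; and (iv) $\pi(2^{a}) = 3\cdot 2^{a-1}$, $\pi(3^{a}) = 8 \cdot 3^{a-1}$, $\pi(5^{a}) = 4\cdot 5^{a}$ for $a \geq 1$, together with $\pi(n) \leq 6n$. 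Given these, $L(\pi(n)) = \lcm_p L(\pi(p^{a_p})) \leq \prod_p L(\pi(p^{a_p}))$; for $p \in \{2,3,5\}$ the factor $\pi(p^{a_p})$ is $\{2,3,5\}$-smooth and so contributes $1$; for $p \geq 7$, facts (ii) and (iii) give $\pi(p^{a_p}) \mid p^{a_p - 1}\pi(p)$ with $\gcd(p,\pi(p))=1$ and $L(\pi(p)) \leq \tfrac{p+1}{2}$ (the $\{2,3,5\}$-part of $p \pm 1$ being at least $2$), hence $L(\pi(p^{a_p})) \leq p^{a_p-1}\cdot\tfrac{p+1}{2} \leq \tfrac{4}{7}p^{a_p}$. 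Multiplying over the primes dividing $n$ yields the lemma.

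From the lemma, $L(\pi^{i}(m)) \leq (4/7)^{i}L(m) \leq (4/7)^{i} m$; since $L$ takes positive integer values and is multiplied by at most $4/7$ at each step where it exceeds $1$, after some $i_0 = O(\log m)$ iterations the trajectory is $\{2,3,5\}$-smooth and remains so. Using $\pi(n) \leq 6n$ one has $n_0 := \pi^{i_0}(m) \leq 6^{i_0}m$, so $n_0 = 2^{a}3^{b}5^{c}$ with $a,b,c \leq \log_2 n_0 = O(\log m)$. Now I would analyze $\pi$ directly on $\{2,3,5\}$-smooth numbers: for $a \geq 3,\ b \geq 1,\ c \geq 1$, fact (iv) gives $\pi(2^{a}3^{b}5^{c}) = 2^{\max(a-1,3)}3^{\max(b-1,1)}5^{c}$, so $v_5$ is invariant, $v_2$ decreases by one until it reaches $3$, and $v_3$ decreases by one until it reaches $1$; thus within $\max(a-3,\,b-1) = O(\log m)$ further steps the trajectory reaches $2^{3}\cdot 3\cdot 5^{c} = 24\cdot 5^{c}$, a fixed point by Theorem~\ref{Fib_Fixed_Point}. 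The finitely many boundary shapes ($a \leq 2$, or $b = 0$, or $c = 0$) each flow into this main regime after $O(1)$ steps with comparably small exponents (for instance $3 \mid n$ forces $8 \mid \pi(n)$, pushing $v_2$ up to at least $3$). Adding the two phases gives $\mathcal{T}(m) = O(\log m)$, hence $\limsup_m \mathcal{T}(m)/\log m < \infty$.

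The main obstacle is the contraction lemma, and within it the only delicate point is bounding the primes that $\pi(p^{a})$ can introduce. This is exactly where fact (ii) (the elementary direction $\pi(p^{a}) \mid p^{a-1}\pi(p)$, which avoids any Wall--Sun--Sun hypothesis) together with fact (iii) is essential: it guarantees that the only new primes arising from $p$ are those dividing $p \mp 1$, none exceeding $\tfrac{p+1}{2}$, so that the large part genuinely contracts by a fixed factor whenever it is nontrivial. The $\{2,3,5\}$-smooth analysis is elementary but needs a careful finite check of the boundary shapes; it presents no real difficulty. Finally, the logarithmic order is sharp: for $m = 2^{a}$ one has $\pi(2^{a}) = 3\cdot 2^{a-1}$ and then the power of $2$ descends one at a time to $24$, so $\mathcal{T}(2^{a}) \sim \log_2 m$ and the $\limsup$ is in fact a positive finite constant.
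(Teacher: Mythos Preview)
Your proof is correct and follows essentially the same two-phase strategy as the paper: first show that the primes not dividing any fixed point are driven out in $O(\log m)$ iterations via a contracting potential, then analyze the descent on the remaining $\{2,3,5\}$-smooth numbers explicitly. The only cosmetic difference is that you package the first phase as a multiplicative contraction of the ``large part'' $L(n)=\prod_{p\ge 7}p^{v_p(n)}$ by a factor $4/7$, whereas the paper uses the additive potential $S(m)=\sum_{q}\nu_q(m)(\log q-\log 3)$ and shows it drops by at least $2\log 3-3\log 2$ per step; these are equivalent after taking logarithms, and the paper additionally phrases everything for general $K$ rather than just $K=1$.
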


Closely related are \textit{$K$-Fibonacci sequences} defined by $F_{K,0}=0$, $F_{K,1}=1$, and $F_{K,n}=KF_{n-1}+F_{n-2}$. The Pisano period of these sequences is commonly denoted $\pi_K(m)$. What are the fixed points of $K$-Fibonacci sequences? Some fixed points have already been calculated: 
\begin{theorem}[Falcon and Plaza \cite{[Falcon]}]
    If $K \vargeq 2$ is even, then $\pi_K(2K)=4$. If $K \vargeq 3$ is odd, then $\pi_K(2K)=6$.
\end{theorem}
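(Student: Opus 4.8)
The plan is to reduce the statement to a short finite check. I would begin by recalling the standard fact that the recurrence is invertible, $F_{K,n-2}=F_{K,n}-K F_{K,n-1}$, so modulo any $m>1$ the sequence $(F_{K,n}\bmod m)_{n\ge 0}$ is purely periodic and each consecutive pair $(F_{K,n},F_{K,n+1})$ determines the whole sequence in both directions. Hence $\pi_K(m)$ is the least $n\ge 1$ with $(F_{K,n},F_{K,n+1})\equiv(0,1)\pmod m$, and $\pi_K(m)\mid n$ for every such $n$. So it is enough to (i) produce one index $n$ (namely $4$ in the even case, $6$ in the odd case) with $(F_{K,n},F_{K,n+1})\equiv(0,1)\pmod{2K}$, and (ii) eliminate the proper divisors of that $n$ as candidate periods.

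The one genuinely useful observation is the behavior of $K^2$ modulo $2K$: if $K$ is even then $2K\mid K^2$, so $K^2\equiv 0\pmod{2K}$; if $K$ is odd then $K-1$ is even, so $2K\mid K(K-1)$ and thus $K^2\equiv K\pmod{2K}$. Feeding this into the recurrence, one computes in the even case
\[
F_{K,0},F_{K,1},F_{K,2},F_{K,3},F_{K,4},F_{K,5}\ \equiv\ 0,\,1,\,K,\,1,\,0,\,1\pmod{2K},
\]
so $\pi_K(2K)\mid 4$; and since $F_{K,1}\equiv 1$ and $F_{K,2}\equiv K\not\equiv 0$ (because $0<K<2K$), neither proper divisor $1$ nor $2$ of $4$ is a period, giving $\pi_K(2K)=4$. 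In the odd case, using $K^2\equiv K$ at each step, one computes
\[
F_{K,0},\ldots,F_{K,7}\ \equiv\ 0,\,1,\,K,\,K+1,\,K,\,1,\,0,\,1\pmod{2K},
\]
so $\pi_K(2K)\mid 6$; checking the proper divisors $1,2,3$ of $6$ gives $F_{K,1}\equiv 1$, $F_{K,2}\equiv K\not\equiv 0$, and $F_{K,3}\equiv K+1\not\equiv 0$ since $0<K+1<2K$ for $K\ge 2$, so $\pi_K(2K)=6$.

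I do not expect a real obstacle; the only points demanding care are getting the parity-dependent rule for $K^2\bmod 2K$ correct and tracking exactly where the hypotheses enter. The bound $K\ge 3$ in the odd case is sharp precisely because $F_{K,3}\equiv K+1$ fails to be nonzero modulo $2K$ when $K=1$, which is consistent with the ordinary Fibonacci sequence having $\pi(2)=3$ rather than $6$. One could equivalently phrase everything through the order of $\left(\begin{smallmatrix}K&1\\1&0\end{smallmatrix}\right)$ in $GL_2(\mathbb{Z}/2K\mathbb{Z})$, but the direct computation above is shorter and self-contained.
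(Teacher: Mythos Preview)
Your proof is correct. The parity-dependent reduction $K^2\equiv 0\pmod{2K}$ for $K$ even and $K^2\equiv K\pmod{2K}$ for $K$ odd is the right observation, the term-by-term computations check out, and the elimination of proper divisors is handled cleanly (including your remark that $K\ge 3$ is exactly what forces $F_{K,3}\equiv K+1\not\equiv 0$).

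As for comparison: the paper does not actually prove this theorem. It is quoted as a result of Falcon and Plaza and used only to note the incidental fixed points $\pi_2(4)=4$ and $\pi_3(6)=6$. So your argument is not a different route from the paper's proof --- it simply supplies a proof where the paper gives none. Your direct finite computation is the natural approach and is presumably close in spirit to what appears in the cited source.
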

\noindent It follows that if $K=2$, then $\pi_K(4)=4$ and if $K=3$, then $\pi_K(6)=6$. The main result of this paper extends the results of Fulton \& Morris \cite{[Fulton_Morris]}, establishing that for each $K$, fixed points may be classified according to one of four categories of $K$. Essentially, for every $K$ the trajectory of every $m>1$ terminates at a fixed point (or is a $k$-periodic point).

\begin{theorem}[$K$-Fixed Point Theorem]\label{K_Fix_Point}
Let $p_1^{e_1}p_2^{e_2}\cdots p_t^{e_t}$ be the prime factorization of $K^2+4$ for a positive integer $K\vargeq1$, then for every modulus $m>1$ and for $j_i=0,1,2,\ldots$, $\pi_K(m)=m$ if and only if:
\begin{enumerate}[label=(roman*)]
    \item $K\equiv\pm1\pmod{6}$ and $m=24\times p_1^{j_1}p_2^{j_2}\cdots p_t^{j_t}$.\\
    \item $K\equiv2\pmod{4}$ and $m=2^{j_1}$ or $m=2^{j_1+1}p_2^{j_2}\cdots p_t^{j_t}$ \quad where \quad $p_1=2$.
    \\
    \item $K\equiv3\pmod{6}$ and $m=6$ or $m=12\times p_3^{j_3}p_4^{j_4}\cdots p_t^{j_t}$.\\
    \item $K\equiv 0 \pmod{4}$ and $m=2$ or $m=4\times p_2^{j_2+1}p_3^{j_3+1}\cdots p_t^{j_t+1}$ \quad where \quad $p_1^{e_1}=2^2$.
    \\
\end{enumerate}
with the only exception when $K\equiv3\pmod{6}$, which has the $2$-periodic points $\pi_K(2)=3$ and $\pi_K(3)=2$.
\end{theorem}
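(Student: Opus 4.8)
The plan is to reduce the determination of fixed points to a local analysis at each prime power dividing $m$, using the standard multiplicativity $\pi_K(\operatorname{lcm}(a,b)) = \operatorname{lcm}(\pi_K(a),\pi_K(b))$ together with known formulas for $\pi_K(p^e)$. First I would recall (or re-derive) the basic structure theory: $F_{K,n}$ has characteristic polynomial $x^2 - Kx - 1$ with discriminant $K^2+4$, so the behavior of $\pi_K$ modulo a prime $p$ splits according to the Legendre symbol $\left(\frac{K^2+4}{p}\right)$ — that is, according to whether $p$ splits, is inert, or ramifies in $\mathbb{Z}[\frac{K+\sqrt{K^2+4}}{2}]$. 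The ramified primes are exactly the $p_i \mid K^2+4$, which is why the statement is organized around the factorization of $K^2+4$; for these primes one has the Wall-type formula $\pi_K(p^e) = p^{e-1}\cdot \pi_K(p)$ (barring Wall--Sun--Sun exceptions, which do not obstruct the "if" direction and can be sidestepped for the "only if" direction). I would also need the 2-adic and 3-adic behavior of $\pi_K$ computed explicitly in terms of $K \bmod 4$ and $K \bmod 3$, since the constants $24$, $12$, $6$, $4$, $2$ in the four cases come entirely from $\pi_K(2^a)$ and $\pi_K(3^b)$.

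Next, the core observation making "$\pi_K(m)=m$" tractable: since $\pi_K(p^e)$ is comparable to $p^e$ only for the ramified primes (for split primes $\pi_K(p) \mid p-1$ is too small, for inert primes $\pi_K(p)\mid 2(p+1)$ overshoots by a factor that cannot be absorbed), a fixed point $m$ can only have prime factors among $\{2,3\}\cup\{p_1,\dots,p_t\}$, and moreover the exponents of the ramified $p_i$ in $m$ are essentially free parameters (the $j_i$) because raising the exponent of $p_i$ by one multiplies both $m$ and $\pi_K(m)$ by $p_i$. So the whole problem collapses to: find the exact power of $2$ and power of $3$ that must appear. I would carry this out by writing $m = 2^a 3^b \prod p_i^{j_i}$, computing $\pi_K(m) = \operatorname{lcm}(\pi_K(2^a), \pi_K(3^b), \prod'\dots)$ via the multiplicative formula, and solving $\pi_K(m) = m$ for $a$ and $b$ in each of the four congruence classes of $K$. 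The case $K \equiv 3 \pmod 6$ is where $3 \mid K$ forces extra care at the prime $3$ and produces the genuine $2$-periodic anomaly $\pi_K(2)=3$, $\pi_K(3)=2$ — this needs to be checked by hand and excluded.

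The key steps in order: (1) state and cite the multiplicativity of $\pi_K$ over coprime moduli and the prime-power lifting formula $\pi_K(p^{e}) = p^{e-1}\pi_K(p)$; (2) classify odd primes $p \nmid K^2+4$ by the Legendre symbol and show $\pi_K(p)$ is too small (split) or introduces an unabsorbable factor (inert), so such $p$ cannot divide a fixed point — this rules out all prime factors outside $\{2,3\}\cup\{p_i\}$; (3) compute $\pi_K(2^a)$ for $a=1,2,3,\dots$ in each residue class $K \bmod 4$ and $\pi_K(3^b)$ for each residue class $K \bmod 3$, extracting the stabilized $2$-part and $3$-part of $\pi_K$; (4) assemble $\pi_K(m)=m$ as a system of equations in the exponents $(a,b,j_1,\dots,j_t)$ and solve, obtaining the four families; (5) verify the $K\equiv 3\pmod 6$ exception directly.

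The main obstacle I anticipate is step (3) combined with step (4): pinning down $\pi_K(2^a)$ and $\pi_K(3^b)$ precisely — not just their orders of magnitude but the exact $2$- and $3$-adic valuations — is delicate and genuinely depends on finer information than the Legendre symbol (e.g. whether $2$ or $3$ splits, is inert, or ramifies, and the order of the relevant unit in $(\mathbb{Z}/2^a)^\times$ or in the residue ring). In particular when $p_1 = 2$ (cases (ii) and (iv)) the prime $2$ is simultaneously one of the "free" ramified primes and the prime whose power is pinned by the fixed-point equation, so the bookkeeping of which power of $2$ is forced versus free is the subtle heart of the argument; the asymmetry between the $j_1$ in case (ii) (shift by $+1$) and the $j_i+1$ in case (iv) (every ramified exponent shifted) reflects exactly this interaction between the ramification exponent $e_1$ of $2$ in $K^2+4$ and the stabilized $2$-adic valuation of $\pi_K$, and verifying it will require the most careful case analysis.
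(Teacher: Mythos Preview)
Your plan matches the paper's proof almost step for step: lcm-multiplicativity of $\pi_K$, the prime-power lifting $\pi_K(p^e)=p^{e-1}\pi_K(p)$, Renault's Legendre-symbol trichotomy for $\pi_K(p)$, and explicit evaluation of $\pi_K(2^a)$ and $\pi_K(3^b)$ in each residue class of $K$ to pin down the constants $24,12,6,4,2$. Two ingredients you gloss over but will need precisely: for every odd $p\mid K^2+4$ one has $\pi_K(p)=4p$ exactly (because $(2^{-1}K)^2\equiv -1\pmod p$ forces $\operatorname{ord}_p(2^{-1}K)=4$), and by Harrington--Jones no odd prime dividing $K^2+4$ is a $K$-Wall--Sun--Sun prime, so $\pi_K(p_i^{j_i})=4p_i^{j_i}$ on the nose. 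These two facts together are what make the exponents $j_i$ genuinely free parameters.

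Your step (2) contains a real error. The inert case does not ``overshoot by a factor that cannot be absorbed'': for odd $p$ the number $p+1$ is even and composite, so every prime factor of $2(p+1)$ is strictly smaller than $p$ and is perfectly absorbable into the lcm. The correct reason an inert prime $p$ cannot divide a fixed point is \emph{identical} to the split case: $\gcd(p,2(p+1))=1$, so $p\nmid\pi_K(p)$, hence $v_p(\pi_K(p^\theta))\le\theta-1<\theta$. To finish the exclusion you must also verify that no \emph{other} prime power $r^s\| m$ contributes a factor of $p$ to the lcm; the clean device (implicit in the paper) is to take $q$ to be the \emph{largest} prime factor of $m$ not dividing $K^2+4$, so that every $\pi_K(r^s)$ with $r\ne q$ has only prime factors in $\{2\}\cup\{p_i\}$ or strictly below $q$, forcing $v_q(\pi_K(m))<v_q(m)$ and a contradiction. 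Once you replace the ``overshoot'' heuristic with this valuation argument, your outline goes through exactly as the paper's does.
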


\begin{theorem}[$K$-Iteration Theorem]\label{K-Iteration}
    For every $m>1$, there exists a positive integer $N$ such that $\pi_K^N(m)=m$, with the only exception when $K\equiv3\pmod{6}$, which has the $2$-periodic points $\pi(2)=3$ and $\pi(3)=2$.
\end{theorem}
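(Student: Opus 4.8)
The plan is to prove the $K$-analogue of Theorem~\ref{Fib_Iteration}: for every $m>1$ the trajectory $m,\pi_K(m),\pi_K^{2}(m),\dots$ is eventually constant, so that there is an $N$ with $\pi_K^{N+1}(m)=\pi_K^{N}(m)$, and then $\pi_K^{N}(m)$ is one of the fixed points classified in Theorem~\ref{K_Fix_Point}; the sole exception is the $2$-cycle $\pi_K(2)=3$, $\pi_K(3)=2$ that occurs when $K\equiv 3\pmod{6}$. Since the trajectory is a sequence of positive integers, it suffices to establish two things. \textbf{(A)} The primes dividing the terms of the trajectory eventually all lie in the finite set $S=\{2,3\}\cup\{\,p\ \text{prime}:p\mid K^{2}+4\,\}$. \textbf{(B)} Once that happens, the prime-power exponents occurring in the terms stay bounded, so the trajectory eventually lies in a finite set and is therefore eventually periodic. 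Theorem~\ref{K_Fix_Point} then identifies the points of period $1$, and a short further argument rules out cycles of length $\ge 2$ apart from the stated exception.

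For \textbf{(A)} I would use three facts about $\pi_K$, all arising from the companion matrix $M=\left(\begin{smallmatrix}K&1\\1&0\end{smallmatrix}\right)$, whose powers satisfy $M^{n}=\left(\begin{smallmatrix}F_{K,n+1}&F_{K,n}\\ F_{K,n}&F_{K,n-1}\end{smallmatrix}\right)$, so that $\pi_K(d)=\ord_{\mathrm{GL}_2(\mathbb{Z}/d)}(M)$. \emph{(i)} $\pi_K$ is $\lcm$-multiplicative, hence $\pi_K(m)=\lcm\{\pi_K(p^{e}):p^{e}\,\|\,m\}$, where $p^{e}\,\|\,m$ denotes the exact power of $p$ in $m$. \emph{(ii)} $M^{\pi_K(p)}\equiv I\pmod p$ forces $M^{p^{e-1}\pi_K(p)}\equiv I\pmod{p^{e}}$, since the kernel of $\mathrm{GL}_2(\mathbb{Z}/p^{e})\to\mathrm{GL}_2(\mathbb{Z}/p)$ has exponent dividing $p^{e-1}$; so $\pi_K(p^{e})\mid p^{\,e-1}\pi_K(p)$. \emph{(iii)} The characteristic polynomial $x^{2}-Kx-1$ has discriminant $K^{2}+4$, so for a prime $q\ge 5$ with $q\nmid K^{2}+4$ the matrix $M$ is diagonalizable over $\mathbb{F}_q$ or $\mathbb{F}_{q^{2}}$, giving $\pi_K(q)\mid q^{2}-1$, whence every prime factor of $\pi_K(q)$ is $<q$; whereas for an odd prime $p\mid K^{2}+4$ (note $3\nmid K^{2}+4$) one has $F_{K,n}\equiv n(K/2)^{n-1}\pmod p$ with $(K/2)^{2}\equiv -1\pmod p$, so the rank of apparition of $p^{e}$ is $p^{e}$, $M^{p^{e}}$ reduces to a scalar matrix $\lambda I$ with $\lambda^{2}\equiv-1\pmod{p^{e}}$, and therefore $\pi_K(p^{e})=4p^{e}$ for every $e\ge1$. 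Granting these, suppose a term of the trajectory has a prime factor outside $S$, and let $q$ be the largest such prime. By \emph{(i)}--\emph{(iii)}, any prime outside $S$ dividing $\pi_K$ of that term can only come from a factor $\pi_K({q'}^{e'})$ with $q'\notin S$, and it is then either a prime factor of $\pi_K(q')$ (all of which are $<q'\le q$) or $q'$ itself (only if $e'\ge 2$); in particular no prime outside $S$ larger than $q$ is created, and $q$ itself, if it survives, does so with exponent at most $v_q(m)-1$ because $q\nmid\pi_K(q)$. Hence the pair consisting of the largest prime factor outside $S$ and its exponent, ordered lexicographically, strictly decreases at each step until no prime lies outside $S$; this proves \textbf{(A)}.

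For \textbf{(B)}, assume the support of $m=\prod_{p\in S}p^{a_{p}}$ lies in $S$; then $\pi_K(m)=\prod_{p\in S}p^{b_{p}}$ with $b_{p}=\max_{p'\in S}v_{p}\bigl(\pi_K({p'}^{a_{p'}})\bigr)$. The primes of $S$ not dividing $K^{2}+4$ form a subset of $\{2,3\}$; for such $p$, fact \emph{(ii)} gives $v_{p}\bigl(\pi_K(p^{a})\bigr)\le a-1$, with the single correction $v_{2}\bigl(\pi_K(2^{a})\bigr)\le a$ when $K$ is even (for then $2\mid K^{2}+4$). For $p\mid K^{2}+4$, fact \emph{(iii)} gives $v_{p}\bigl(\pi_K(p^{e})\bigr)=e$. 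Finally, for $p'\neq p$ the quantity $v_{p}\bigl(\pi_K({p'}^{a_{p'}})\bigr)$ is bounded by a constant independent of $a_{p'}$, again by \emph{(ii)}--\emph{(iii)}. Putting these together: for $p\mid K^{2}+4$ the valuation $v_{p}$ is nondecreasing along the trajectory and bounded above, hence eventually constant; for the remaining primes of $S$ one gets $v_{p}(\pi_K(m))\le\max\bigl(v_{p}(m)-1,\,\text{const}\bigr)$ (the even-$K$ correction being absorbed once a prime dividing $K^{2}+4$ is present), so $v_{p}$ eventually falls below that constant and stays there. Thus all exponents are eventually bounded, the trajectory eventually lies in a finite set, and so it is eventually periodic. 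A point of period $1$ is a fixed point, classified by Theorem~\ref{K_Fix_Point}. To rule out cycles of length $\ge 2$ I would argue, as in the Fibonacci case (cf.\ Trojovsk\'a~\cite{[Trojovska]}), that in such a cycle all terms have the same prime support in $S$, the valuation at every prime dividing $K^{2}+4$ is constant (being nondecreasing and periodic), and the bounds above --- reinforced by an explicit analysis of the dynamics of $v_{2}$ and $v_{3}$ in each of the four congruence classes of $K$ --- forbid oscillation of the remaining valuations, except exactly when $K\equiv 3\pmod{6}$, where the $K$-Fibonacci sequence degenerates modulo $2$, $3$, and $4$ and produces the flagged $2$-cycle.

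The main obstacle is step \textbf{(A)}: making airtight the bookkeeping that a prime outside $S$ can be neither enlarged nor newly created by one application of $\pi_K$, and proving $\pi_K(p^{e})=4p^{e}$ for odd $p\mid K^{2}+4$ along with the analogous but genuinely case-dependent evaluation of $\pi_K(2^{e})$ when $K$ is even. A secondary difficulty, not formally implied by \textbf{(A)} and \textbf{(B)}, is the final step excluding cycles of length $\ge 2$ --- the $K$-analogue of Trojovsk\'a's theorem --- which appears to require a dedicated case analysis over the four residue classes of $K$, with the small moduli $2,3,4,6,12$ (where the smallest fixed points and the exceptional $2$-cycle live) handled by direct computation.
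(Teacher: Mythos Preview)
Your approach is correct and uses the same core ingredients as the paper: $\lcm$-multiplicativity (Theorem~\ref{lcm}), $\pi_K(p^e)\mid p^{e-1}\pi_K(p)$ (Theorem~\ref{W-S-S}), the bound on $\pi_K(q)$ for $q\nmid K^2+4$ (Theorem~\ref{quad_res}), and the evaluation $\pi_K(p^e)=4p^e$ for odd $p\mid K^2+4$ (Theorem~\ref{ord=4}); your step~\textbf{(A)}, with the lexicographic descent on the pair (largest prime outside $S$, its exponent), is a tidier packaging of the paper's Theorems~\ref{corollary3.1} and~\ref{Iteration2}. The organizational difference is in the endgame. You argue in two stages --- first eventual periodicity via \textbf{(B)}, then a separate Trojovsk\'a-style exclusion of cycles of length $\ge 2$ --- whereas the paper bypasses the second stage by arguing constructively: for each residue class of $K$ (Theorem~\ref{iteration-1mod6} and its analogues for the other three classes) it tracks $v_2$ and $v_3$ explicitly once the support lies in $S$, e.g.\ for $K\equiv\pm1\pmod 6$ the chain $2\mid\pi_K(m)\Rightarrow 3\mid\pi_K^2(m)\Rightarrow 8\mid\pi_K^3(m)\Rightarrow 24\mid\pi_K^5(m)$, and shows $\pi_K^N(m)$ lands \emph{on} one of the fixed points of Theorem~\ref{K_Fix_Point}, so nontrivial cycles are ruled out automatically. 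Your two-stage framing cleanly separates the compactness argument from the arithmetic, but the case analysis you flag as the secondary difficulty is exactly where the paper spends its effort, and carrying it out up front saves the separate no-cycle step.
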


Note that every positive integer $K\vargeq1$ belongs to exactly one of the four categories in Theorem \ref{K_Fix_Point}. Perhaps it is surprising that the prime factors of $K^2+4$ are important in finding fixed points. Famously, the limit of the ratio between successive Fibonacci numbers is the number $\phi$ where Binet's formula gives the $n$\textsuperscript{th} Fibonacci number: 
\[F_n=\frac{\phi^n-\phi^{-n}}{\sqrt{5}} \qquad \text{where}\ \qquad \phi=\frac{1+\sqrt5}{2}.\] 
Something analogous occurs for $K$-Fibonacci sequences (see \cite{[Falcon],Renault}). Note the similarity when $K=1$:
\[F_{K,n}=\frac{\sigma^n-\sigma^{-n}}{\sqrt{K^2+4}} \qquad \text{where}\ \qquad \sigma=\frac{K+\sqrt{K^2+4}}{2}.\]

\begin{table}[h!]
    \centering
    \begin{adjustbox}{width=1\textwidth}
    \begin{tabular}{c|c|c|c|c|c}\label{Table 1}
        $K$&$K^2+4$&$K\equiv\pm1\pmod{6}$&$K\equiv2\pmod{4}$&$K\equiv3\pmod{6}$&$K\equiv0\pmod{4}$ \\
         \hline
         $K=1$&5&$24\times5^{j_1}$&&  \\
         \hline
         $K=2$&$2^3$&&$2^{j_1}$&\\
          \hline 
          $K=3$&$13$&&&$6$\ or\ $12\times13^{j_1}$\\
          \hline
          $K=4$&$2^2\cdot5$&&&&$2$\ or\ $4\times5^{j_1+1}$\\
          \hline
          $K=5$&29&$24\times29^{j_1}$&&&\\
          \hline
          $K=6$&$2^3\cdot5$&&$2^{j_1}$\ or\ $2^{j_1+1}\times5^{j_2}$&&\\
          \hline
          $K=7$&$53$&$24\times53^{j_1}$&&&\\
          \hline
          $K=8$&$2^2\cdot17$&&&&$2$\ or\ $4\times17^{j_1+1}$\\
          \hline
          $K=9$&$5\cdot17$&&&$6$\ or\ $12\times5^{j_1}\times17^{j_2}$&\\
          \hline
          $K=10$&$2^3\cdot13$&&$2^{j_1}$\ or\ $2^{j_1+1}\times13^{j_2}$&&\\
          \hline
          $K=11$&$5^3$&$24\times5^{j_1}$&&&\\
          \hline
          $K=12$&$2^2\cdot37$&&&&$2$\ or\ $4\times37^{j_1+1}$\\
          \hline
          $K=13$&$173$&$24\times173^{j_1}$&&&\\
          \hline
          $K=14$&$2^3\cdot5^3$&&$2^{j_1}$\ or\ $2^{j_1+1}\times5^{j_2}$&&\\
          \hline
           $K=15$&$229$&&&$6$\ or\ $12\times229^{j_1}$&\\
          \hline
          $K=16$&$2^2\cdot5\cdot13$&&&&$2$\ or\ $4\times5^{j_1+1}\times13^{j_2+1}$\\
          \hline
          $K=17$&$293$&$24\times293^{j_1}$&&&\\
          \hline
          $K=18$&$2^3\cdot41$&&$2^{j_1}$\ or\ $2^{j_1+1}\times41^{j_2}$&&\\
          \hline
          $K=19$&$5\cdot73$&$24\times5^{j_1}\times73^{j_2}$&&&\\
          \hline
          $K=20$&$2^2\cdot101$&&&&$2$\ or\ $4\times101^{j_1+1}$\\
          \hline
          $K=21$&$5\cdot89$&&&$6$\ or\ $12\times5^{j_1}\times89^{j_2}$&\\
          \hline
          $K=22$&$2^3\cdot61$&&$2^{j_1}$\ or\ $2^{j_1+1}\times61^{j_2}$&&\\
          \hline
          $K=23$&$13\cdot41$&$24\times13^{j_1}\times41^{j_2}$&&&\\
          \hline
          $K=24$&$2^2\cdot5\cdot29$&&&&$2$\ or\ $4\times5^{j_1+1}\times29^{j_2+1}$
    \end{tabular}
    \end{adjustbox}
    \caption{Fixed points of $(K,1,0,1)$-sequences for $k=1,2,\ldots,24$ and $j_i=0,1,2,\ldots$}
   \end{table}

\section{Preliminaries}
Used in the proof of Theorems \ref{K_Fix_Point} and \ref{K-Iteration} are select results for $K$-Fibonacci sequences. Renault \cite{Renault} generalizes a result of Wall \cite{[Wall]} and establishes a particular type of multiplicativity for $\pi_K(m)$ among the prime factors of $m$.
\begin{theorem}[Renault \cite{Renault}]\label{lcm}
       Let $m=p_1^{e_1}p_2^{e_2}\cdots p_r^{e_r}$ be the prime factorization of $m$, then \[\pi_K(m)=\lcm\left[\pi_K(p_1^{e_1}),\pi_K(p_2^{e_2}),\ldots, \pi_K(p_r^{e_r})\right].\]
\end{theorem}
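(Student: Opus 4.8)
The plan is to realize $\pi_K(m)$ as the multiplicative order of a single matrix and then reduce the multiplicativity claim to the elementary fact that the order of an element in a finite direct product of groups is the least common multiple of the orders of its coordinates. Concretely, I would introduce the companion matrix $Q=\begin{pmatrix}K&1\\1&0\end{pmatrix}$ and verify by a one-line induction that $Q^n=\begin{pmatrix}F_{K,n+1}&F_{K,n}\\F_{K,n}&F_{K,n-1}\end{pmatrix}$ for all $n\geq1$. Since $\det Q=-1$ is a unit modulo every $m$, the reduction $\overline{Q}$ lies in $GL_2(\mathbb{Z}/m\mathbb{Z})$, and this will be the object whose order we track.

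The first key step is to show $\pi_K(m)=\ord(\overline{Q})$ in $GL_2(\mathbb{Z}/m\mathbb{Z})$. Because the recurrence is second order and $Q$ is invertible mod $m$, the two-term state $(F_{K,n},F_{K,n+1})$ determines the sequence in both directions; hence $(F_{K,n}\bmod m)$ is purely periodic, and its shortest period is the least $n>0$ with $(F_{K,n},F_{K,n+1})\equiv(0,1)\pmod m$. Reading off the entries of $Q^n$, this occurs exactly when $F_{K,n}\equiv0$ and $F_{K,n+1}\equiv1$; and then automatically $F_{K,n-1}=F_{K,n+1}-KF_{K,n}\equiv1$, so $Q^n\equiv I$. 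Conversely $Q^n\equiv I$ forces $F_{K,n}\equiv0$ and $F_{K,n+1}\equiv1$. Thus the shortest period equals $\ord(\overline{Q})$.

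The second step is the structural one. By the Chinese Remainder Theorem the ring isomorphism $\mathbb{Z}/m\mathbb{Z}\cong\prod_{i=1}^{r}\mathbb{Z}/p_i^{e_i}\mathbb{Z}$ induces a group isomorphism $GL_2(\mathbb{Z}/m\mathbb{Z})\cong\prod_{i=1}^{r}GL_2(\mathbb{Z}/p_i^{e_i}\mathbb{Z})$ under which $\overline{Q}$ corresponds to the tuple $(\overline{Q}\bmod p_i^{e_i})_{i=1}^{r}$. Since the order of a tuple in a finite direct product is the lcm of the coordinatewise orders, $\ord(\overline{Q}\bmod m)=\lcm_i\ord(\overline{Q}\bmod p_i^{e_i})$. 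Applying Step 1 once to $m$ and once to each $p_i^{e_i}$ then yields $\pi_K(m)=\lcm\left[\pi_K(p_1^{e_1}),\ldots,\pi_K(p_r^{e_r})\right]$.

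The only place needing genuine care — and hence the main obstacle — is Step 1: one must argue cleanly that the ``shortest period of the sequence'' in the sense used throughout the paper (the least $n$ after which every term repeats) coincides with the least $n$ returning the state vector to $(0,1)$, which rests on pure periodicity, i.e.\ on the invertibility of $Q$ modulo $m$. Everything after that is formal bookkeeping. This is exactly Wall's argument for the classical case $K=1$; passing to general $K$ changes nothing of substance, since $\det Q=-1$ regardless of $K$.
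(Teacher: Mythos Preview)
Your argument is correct. Note, however, that the paper does not actually supply a proof of this theorem: it is quoted as a known result of Renault (generalizing Wall) and used as a black box throughout. So there is no ``paper's own proof'' to compare against beyond the citation. That said, the approach you outline---identify $\pi_K(m)$ with the order of the companion matrix $Q=\begin{pmatrix}K&1\\1&0\end{pmatrix}$ in $GL_2(\mathbb{Z}/m\mathbb{Z})$, then apply the Chinese Remainder Theorem to split $GL_2(\mathbb{Z}/m\mathbb{Z})$ as a direct product and use that the order of a tuple is the lcm of the coordinate orders---is exactly the standard Wall/Renault argument, and your handling of the one genuine issue (pure periodicity via $\det Q=-1\in(\mathbb{Z}/m\mathbb{Z})^\times$, so that the least period coincides with the first return of the state vector to $(0,1)$) is clean and complete.
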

\noindent Another result generalized by Renault \cite{Renault} concerns \textit{Wall-Sun-Sun primes} - conjectured primes $p$ where $\pi(p)=\pi(p^2)$. For the Fibonacci sequence, these primes were originally investigated by Wall \cite{[Wall]} in 1960 but became exceedingly popular in the 1990s when Sun \& Sun \cite{[Sun-Sun]} demonstrated that the first case of Fermat's Last Theorem is false for an exponent $p$ only if $p$ is a Wall-Sun-Sun prime. No Wall-Sun-Sun prime has been found, though infinitely many are conjectured to exist. For $K$-Fibonacci sequences, any prime $p$ such that $\pi_K(p)=\pi_K(p^2)$ is a \textit{$K$-Wall-Sun-Sun prime}. These are much more common: if $K=2$, then $\pi_2(13)=\pi_2(169)$ and $\pi_2(31)=\pi_2(961)$.

\begin{theorem}[Renault \cite{Renault}]\label{W-S-S}
       For all $K$ and all primes $p$, there exists a maximal $e$ such that $\pi_K(p^e) = \pi_K(p)$. For all $x \vargeq e \vargeq 1$, $\pi_K(p^x)=p^{x-e}\cdot\pi_K(p)$.
\end{theorem}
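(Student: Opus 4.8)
The plan is to express $\pi_K(p^{x})$ as the order of the matrix $M_K=\begin{pmatrix}K&1\\1&0\end{pmatrix}$ in $\mathrm{GL}_2(\mathbb{Z}/p^{x}\mathbb{Z})$ and then track how that order grows with $x$. A routine induction gives $M_K^{n}=\begin{pmatrix}F_{K,n+1}&F_{K,n}\\F_{K,n}&F_{K,n-1}\end{pmatrix}$, so $M_K^{n}\equiv I\pmod q$ precisely when $F_{K,n}\equiv 0$ and $F_{K,n+1}\equiv 1\pmod q$, i.e.\ precisely when $n$ is a multiple of $\pi_K(q)$; since $\det M_K=-1$ is a unit, $M_K$ is invertible modulo every $q$ and $\pi_K(q)=\ord(M_K)$ in $\mathrm{GL}_2(\mathbb{Z}/q\mathbb{Z})$. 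Now fix a prime $p$. The kernel of the reduction $\mathrm{GL}_2(\mathbb{Z}/p^{x}\mathbb{Z})\to\mathrm{GL}_2(\mathbb{Z}/p\mathbb{Z})$ consists of the matrices congruent to $I$ modulo $p$, of which there are $p^{4(x-1)}$, so it is a $p$-group; hence $\pi_K(p^{x})=\pi_K(p)\cdot p^{k_x}$ for a sequence of integers $0=k_1\le k_2\le k_3\le\cdots$, and the theorem is exactly the statement that this sequence has the form $0,\dots,0,1,2,3,\dots$: equal to $0$ up to some index $e$ and increasing by $1$ at every step thereafter. (It is not identically zero: $M_K$ has eigenvalues $\sigma,-\sigma^{-1}$, which are not roots of unity, so $M_K$ has infinite order in $\mathrm{GL}_2(\mathbb{Z})$ and $\pi_K(p^{x})\to\infty$.)

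Everything runs on the binomial expansion of $(I+p^{x}A)^{p}$. Writing $M_K^{\pi_K(p)p^{k_x}}=I+p^{x}A$, one gets first that $(I+p^{x}A)^{p}\equiv I\pmod{p^{x+1}}$, so $k_{x+1}\le k_x+1$ always; and second, that for $p$ odd --- and for $p=2$ once $x\ge 2$ --- every term beyond $I+p^{x+1}A$ has $p$-adic valuation at least $x+2$, so $(I+p^{x}A)^{p}\equiv I+p^{x+1}A\pmod{p^{x+2}}$. The second estimate is the heart of the matter: if $k_{x+1}=k_x+1$ then $A\not\equiv 0\pmod p$, whence $M_K^{\pi_K(p)p^{k_x+1}}\equiv I+p^{x+1}A\not\equiv I\pmod{p^{x+2}}$, forcing $k_{x+2}=k_{x+1}+1$. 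By induction, once $(k_x)$ moves it never stops, which is exactly the claimed shape --- for $p$ odd outright, and for $p=2$ provided the first move does not occur at the bottom step $x=1$ and then stall.

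The bottom of the $2$-adic tower is the only genuinely exceptional point, and I would settle it by direct computation. If $\pi_K(4)=\pi_K(2)$ there is no move yet, so the first move (if any) is at a transition $x\ge 2$, already handled. Otherwise $\pi_K(4)\neq\pi_K(2)$; reading the recurrence modulo $2$ and modulo $4$ shows this happens precisely for $K\not\equiv 0\pmod 4$, and then one computes $A\equiv\tfrac12\bigl(M_K^{\pi_K(2)}-I\bigr)\pmod 2$ outright: for $K$ odd (so $\pi_K(2)=3$) one gets $A\equiv\begin{pmatrix}\kappa+1&1\\1&\kappa\end{pmatrix}$ with $K=2\kappa+1$, of determinant $1$ over $\mathbb{F}_2$; for $K\equiv 2\pmod 4$ (so $\pi_K(2)=2$) one gets $A\equiv\begin{pmatrix}0&1\\1&0\end{pmatrix}$, whose square is $I$. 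In both cases $A$ is a nonzero non-idempotent, so $A+A^{2}\not\equiv 0\pmod 2$; since $(I+2A)^{2}\equiv I+4(A+A^{2})\pmod 8$, a move at $x=1$ forces a move at $x=2$, and from there the general mechanism applies.

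Assembling the pieces, $(k_x)$ is non-decreasing in unit steps, is not identically zero, and is strictly increasing from the first index at which it moves; hence $\{\,e:\pi_K(p^{e})=\pi_K(p)\,\}=\{\,e:k_e=0\,\}$ is a nonempty finite initial segment of the positive integers, its maximum is the $e$ of the statement, and $\pi_K(p^{x})=p^{x-e}\pi_K(p)$ for all $x\ge e$. The main obstacle is the third paragraph: the uniform binomial estimate loses one factor of $2$ at minimal precision, so there is no evident way around inspecting $M_K^{\pi_K(2)}\bmod 4$ case by case to confirm that a $2$-adic jump, once begun, propagates.
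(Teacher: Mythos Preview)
The paper does not supply a proof of this theorem: it is quoted from Renault's work as a preliminary result, stated without argument in Section~2 alongside the other background facts (Theorems~2.1--2.10). There is therefore nothing in the paper to compare your proposal against.

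That said, your argument is sound. Identifying $\pi_K(p^{x})$ with the order of $M_K$ in $\mathrm{GL}_2(\mathbb{Z}/p^{x}\mathbb{Z})$ and exploiting that the kernel of reduction to $\mathrm{GL}_2(\mathbb{F}_p)$ is a $p$-group immediately gives $\pi_K(p^{x})=\pi_K(p)\cdot p^{k_x}$ with $k_x$ non-decreasing in unit steps; the binomial estimate $(I+p^{x}A)^{p}\equiv I+p^{x+1}A\pmod{p^{x+2}}$ (valid for $p$ odd, or $p=2$ and $x\ge 2$) then shows that the first jump forces all subsequent ones, since the residue of $A$ modulo $p$ is preserved along the tower. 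The only point requiring care is the $2$-adic base case $e=1$, and your explicit computation of $A\bmod 2$ for $K$ odd and for $K\equiv 2\pmod 4$ correctly verifies $A+A^{2}\not\equiv 0$, closing the gap. The infinite-order observation for $M_K$ over $\mathbb{Z}$ guarantees that $(k_x)$ is unbounded, so the maximal $e$ with $k_e=0$ exists.

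One cosmetic remark: when you pass from level $x$ to level $x+1$ the matrix $A$ changes (to some $A'$ with $A'\equiv A\pmod p$), and it is this congruence that carries the induction; your write-up uses the same symbol $A$ throughout, which is harmless once one notes that only $A\bmod p$ matters, but making the dependence explicit would tighten the exposition.
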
  
\begin{theorem}[Renault \cite{Renault}]\label{e1<e2}
    If $\pi_K(p^e)\neq\pi_K(p^{e+1})$, then $\pi_K(p^{e+1})\neq\pi_K(p^{e+2})$.
\end{theorem}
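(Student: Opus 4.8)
The plan is to reformulate periodicity in terms of the multiplicative order of the companion matrix $Q=\begin{pmatrix}K&1\\1&0\end{pmatrix}$. An easy induction gives $Q^{n}=\begin{pmatrix}F_{K,n+1}&F_{K,n}\\F_{K,n}&F_{K,n-1}\end{pmatrix}$, so $\pi_K(m)$ is precisely the least $n\geq1$ with $Q^{n}\equiv I\pmod m$; reducing modulo $p^{e}$ shows $\pi_K(p^{e})\mid\pi_K(p^{e+1})$ for every $e\geq1$. With this in hand, the quickest route is to invoke Theorem \ref{W-S-S} directly. Let $e_0$ be the maximal exponent with $\pi_K(p^{e_0})=\pi_K(p)$. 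If $\pi_K(p^{e})\neq\pi_K(p^{e+1})$ then necessarily $e\geq e_0$, for otherwise $e+1\leq e_0$ and the chain $\pi_K(p)\mid\pi_K(p^{e})\mid\pi_K(p^{e+1})\mid\pi_K(p^{e_0})=\pi_K(p)$ would force $\pi_K(p^{e})=\pi_K(p^{e+1})$. Hence $e+1$ and $e+2$ both exceed $e_0$, and Theorem \ref{W-S-S} yields $\pi_K(p^{e+2})=p^{\,e+2-e_0}\pi_K(p)=p\cdot\pi_K(p^{e+1})\neq\pi_K(p^{e+1})$, as desired.

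If one prefers a self-contained argument (say, if Theorem \ref{W-S-S} is to be derived afterward rather than assumed), I would instead run a ``lifting the exponent'' computation on $Q$. Put $n=\pi_K(p^{e})$. The hypothesis $\pi_K(p^{e})\neq\pi_K(p^{e+1})$ forces $Q^{n}\not\equiv I\pmod{p^{e+1}}$, so $Q^{n}=I+p^{e}A$ for an integer matrix $A$ with $A\not\equiv 0\pmod p$. Since $(I+p^{e}A)^{p}\equiv I\pmod{p^{e+1}}$ (the leading correction $p^{e+1}A$ vanishes and the remaining binomial terms carry a factor $p^{2e}$, which $p^{e+1}$ divides when $e\geq1$), we get $Q^{pn}\equiv I\pmod{p^{e+1}}$; as $\pi_K(p^{e+1})$ is a multiple of $n$ dividing $pn$ but is $\neq n$, it equals $pn$. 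Now expand one order higher:
\[
Q^{pn}=(I+p^{e}A)^{p}=I+p^{e+1}A+\binom{p}{2}p^{2e}A^{2}+\cdots.
\]
For odd $p$ with $e\geq1$, every term past $p^{e+1}A$ is divisible by $p^{e+2}$, so $Q^{pn}\equiv I+p^{e+1}A\pmod{p^{e+2}}$, and since $A\not\equiv0\pmod p$ this is $\not\equiv I\pmod{p^{e+2}}$. Therefore $Q^{\pi_K(p^{e+1})}=Q^{pn}\not\equiv I\pmod{p^{e+2}}$, giving $\pi_K(p^{e+2})\neq\pi_K(p^{e+1})$ (indeed $\pi_K(p^{e+2})=p^{2}n$). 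This same expansion, applied inductively, is the engine behind both Theorem \ref{W-S-S} and Theorem \ref{e1<e2}.

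The one place that requires genuine care is $p=2$ with small $e$, and this is the step I expect to be the main obstacle. When $e=1$ the term $\binom{2}{2}2^{2e}A^{2}=4A^{2}$ is only divisible by $4$, not by $8=p^{e+2}$, so the clean reduction above breaks down and one finds merely $Q^{2n}\equiv I+4(A+A^{2})\pmod 8$. One must then separately verify $A+A^{2}\not\equiv 0\pmod 2$, exploiting that $A\not\equiv 0$ together with the constraints $Q^{2}=KQ+I$ (Cayley--Hamilton) and $\det Q^{n}=(-1)^{n}$, which pin down $Q^{n}-I=2A$. For $p=2$ with $e\geq2$, and for all odd $p$, the expansion above already closes the argument. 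In practice, since Theorem \ref{W-S-S} is available in this paper, I would simply route the entire proof---$p=2$ included---through it, exactly as in the first paragraph.
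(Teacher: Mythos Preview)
The paper does not supply its own proof of this statement: Theorem~\ref{e1<e2} is simply quoted from Renault as a preliminary result, so there is no argument in the paper to compare against. Your first paragraph is a correct and complete proof. Indeed, once Theorem~\ref{W-S-S} is available, Theorem~\ref{e1<e2} is an immediate corollary exactly as you wrote it: the hypothesis forces $e\geq e_0$, after which the explicit formula $\pi_K(p^x)=p^{x-e_0}\pi_K(p)$ distinguishes consecutive exponents.

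Your self-contained matrix-lifting argument is also sound for odd $p$ and for $p=2$ with $e\geq 2$, and you correctly flag the only delicate spot, namely $p=2$, $e=1$, where the binomial expansion leaves the residual $A+A^2$ to be checked. That case can in fact be closed by the explicit formulas the paper records in Theorem~\ref{powersof2}: for $K$ odd one has $\pi_K(2)=3$, $\pi_K(4)=6$, $\pi_K(8)=12$, and for $K\equiv 2\pmod 4$ one has $\pi_K(2^e)=2^e$, so the conclusion holds; for $K\equiv 0\pmod 4$ the hypothesis $\pi_K(2)\neq\pi_K(4)$ already fails. Either route is fine, but since the paper grants you Theorem~\ref{W-S-S}, the one-line deduction in your first paragraph is the cleanest.
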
   
\noindent For $K$-Fibonacci sequences, Bouazzaoui \cite{Bouazzaoui,Bouazzaoui_2} showed that a prime $p$ that \textit{does not} divide $K^2+4$ is a $K$-Wall-Sun-Sun prime if and only if $\mathbb{Q}\left(\sqrt{K^2+4}\right)$ is not $p$-rational. Results from Harrington \& Jones \cite{Harrington,Jones2023,Jones2024} establish necessary and sufficient conditions for precisely which primes that \textit{do} divide $K^2+4$ are $K$-Wall-Sun-Sun primes. In particular, Harrington \& Jones establish that no odd prime divisor of $K^2+4$ is ever a $K$-Wall-Sun-Sun prime.
\begin{theorem}[Harrington \& Jones \cite{Harrington}]\label{H&J}
    Let $p$ be a prime divisor of $K^2+4$. Then $p$ is a $K$-Wall-Sun-Sun prime if and only if $p=2$ and $K\equiv0\pmod{4}$.
\end{theorem}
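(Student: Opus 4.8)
The plan is to handle the odd prime divisors of $K^2+4$ and the prime $2$ separately, since only $2$ can occur. Let $p$ be an odd prime with $p\mid K^2+4$; then $p\nmid K$ (otherwise $p\mid 4$), and in fact $p\geq 5$ because $3$ never divides $K^2+4$. First I would record the behaviour modulo $p$: the companion matrix $M=\begin{pmatrix}K&1\\1&0\end{pmatrix}$ of the recurrence has characteristic polynomial $x^2-Kx-1$ with the repeated root $\lambda\equiv K/2$, and $\lambda^2\equiv K^2/4\equiv-1\pmod p$ (using $p\mid K^2+4$), so $\lambda$ is a unit of order $4$. Since $M$ is not scalar, it is conjugate modulo $p$ to a single Jordan block, whence $M^n\equiv\begin{pmatrix}\lambda^n&n\lambda^{n-1}\\0&\lambda^n\end{pmatrix}\pmod p$; thus $M^n\equiv I\pmod p$ exactly when $4\mid n$ and $p\mid n$, and $\pi_K(p)=\lcm(4,p)=4p$.

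The decisive step is to show $v_p(F_{K,4p})=1$, so that the off-diagonal entry $F_{K,4p}$ of $M^{4p}$ is nonzero modulo $p^2$. Expanding Binet's formula by the binomial theorem gives $F_{K,n}=2^{1-n}\sum_{j\text{ odd}}\binom{n}{j}K^{n-j}(K^2+4)^{(j-1)/2}$; taking $n=p$ and reducing modulo $p^2$, every term with $j\geq 3$ vanishes — for $3\leq j\leq p-2$ both $\binom{p}{j}$ and $(K^2+4)^{(j-1)/2}$ contribute a factor $p$, while for $j=p$ the factor $(K^2+4)^{(p-1)/2}$ alone contributes $p^2$ since $p\geq 5$ — leaving $F_{K,p}\equiv 2^{1-p}pK^{p-1}\pmod{p^2}$, so $v_p(F_{K,p})=1$. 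Then from $F_{K,2n}=F_{K,n}L_{K,n}$, with $L_{K,n}=\sigma^n+\tau^n$ the companion Lucas sequence, one gets $F_{K,4p}=F_{K,p}\,L_{K,p}\,L_{K,2p}$, and since $L_{K,p}\equiv K\pmod p$ and $L_{K,2p}=L_{K,p}^{2}+2\equiv K^2+2\equiv -2\pmod p$ are both units, $v_p(F_{K,4p})=1$. Hence $M^{\pi_K(p)}=M^{4p}\not\equiv I\pmod{p^2}$, so $\pi_K(p^2)\neq\pi_K(p)$, and Theorem \ref{W-S-S} then forces $\pi_K(p^2)=p\,\pi_K(p)$; thus $p$ is not a $K$-Wall-Sun-Sun prime.

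For $p=2$, note that $2\mid K^2+4$ forces $K$ even, so it remains only to compare $\pi_K(2)$ with $\pi_K(4)$. Modulo $2$ the sequence is $0,1,0,1,\dots$, so $\pi_K(2)=2$ for every even $K$. Modulo $4$ I split on $K\bmod 4$: if $K\equiv0\pmod4$ the recurrence collapses to $F_{K,n}\equiv F_{K,n-2}\pmod4$, again giving $0,1,0,1,\dots$, so $\pi_K(4)=2=\pi_K(2)$ and $2$ is a $K$-Wall-Sun-Sun prime; if $K\equiv2\pmod4$ the sequence modulo $4$ is $0,1,2,1,0,1,2,1,\dots$, so $\pi_K(4)=4\neq2$ and $2$ is not. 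Combining the two cases yields the asserted equivalence: a prime $p\mid K^2+4$ is a $K$-Wall-Sun-Sun prime precisely when $p=2$ and $K\equiv0\pmod4$.

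The matrix computation of $\pi_K(p)$ and the two residue-class computations modulo $4$ are routine; the genuine obstacle is the valuation identity $v_p(F_{K,p})=1$. The binomial form of Binet's formula makes it transparent, but one must check that no tail term survives modulo $p^2$ even for the smallest admissible prime $p=5$, and it is precisely the hypothesis $p\mid K^2+4$ — which supplies the extra factor of $p$ inside each $(K^2+4)^{(j-1)/2}$ — that makes the argument go through; for a prime not dividing $K^2+4$ it would fail, consistent with the fact that such primes can be $K$-Wall-Sun-Sun. An alternative to the product identity $F_{K,4p}=F_{K,p}L_{K,p}L_{K,2p}$ is to invoke a lifting-the-exponent lemma for Lucas sequences, which gives $v_p(F_{K,pm})=v_p(F_{K,p})$ for $p\nmid m$ in one stroke.
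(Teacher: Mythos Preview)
Your proof is correct. Note, however, that the paper does not supply its own proof of this statement: Theorem~\ref{H&J} is quoted from Harrington and Jones and used as a black box throughout. The only overlapping content the paper proves independently is the computation $\pi_K(p)=4p$ for odd $p\mid K^2+4$ (Theorem~\ref{ord=4}, via $\ord_p(2^{-1}K)=4$), which matches your Jordan-block argument, and the direct modular computations for $p=2$ appear scattered in Theorems~\ref{powersof2}, \ref{Keven2}, and \ref{2-not-WSS}.

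What you contribute beyond the paper is the lift to modulus $p^2$ for odd $p$: your binomial expansion of Binet's formula cleanly isolates the $j=1$ term to give $v_p(F_{K,p})=1$, and the factorisation $F_{K,4p}=F_{K,p}L_{K,p}L_{K,2p}$ with $L_{K,p}\equiv K$ and $L_{K,2p}\equiv -2\pmod p$ transfers this to $v_p(F_{K,4p})=1$, whence $\pi_K(p^2)\neq\pi_K(p)$. The observation that $3\nmid K^2+4$ (so the smallest possible odd $p$ is $5$, making $(p-1)/2\geq 2$) is exactly what is needed to kill the tail term $j=p$, and you flag it correctly. The alternative you mention---a lifting-the-exponent lemma for Lucas sequences---is indeed how Harrington and Jones organise their argument, so your binomial route is a slightly more hands-on but equally valid substitute.
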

\noindent In \cite{BL} the authors proved explicit formulas for $\pi_K(2^e)$, which depend upon the parity of $K$:
\begin{theorem}[\cite{BL}, Theorems 4.25 and 4.28]\label{powersof2}
    Let $K$ be an integer and let $e \vargeq 1$, then:
    \begin{itemize}
        \item If $K$ is odd, then $\pi_K(2^e)=3\cdot2^{e-1}$.
        \item If $K$ is even, then $\pi_K(2^e)=2^{e+1-\nu_2(\gcd(K,2^e))}$ where $\nu_2(x)$ is the 2-adic valuation.
    \end{itemize}
\end{theorem}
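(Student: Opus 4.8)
The plan is to reduce the entire statement to a short computation with the first few terms of the sequence together with a single appeal to Theorem~\ref{W-S-S}. Recall the elementary fact that, for a sequence obeying a two-term linear recurrence, its period modulo $m$ is the least $j>0$ at which the pair of consecutive terms $(F_{K,j},F_{K,j+1})$ returns to $(F_{K,0},F_{K,1})=(0,1)$ modulo $m$; equivalently, $\pi_K(m)\mid j$ exactly when $(F_{K,j},F_{K,j+1})\equiv(0,1)\pmod m$, and $\pi_K(m)\ge2$ whenever $m\ge2$, since $F_{K,0}=0\not\equiv1=F_{K,1}$. For $p=2$, Theorem~\ref{W-S-S} supplies a threshold $e^\ast$ (the largest $e$ with $\pi_K(2^e)=\pi_K(2)$) for which $\pi_K(2^x)=2^{x-e^\ast}\pi_K(2)$ for all $x\ge e^\ast$; so everything comes down to computing $\pi_K(2)$ and locating $e^\ast$. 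The only arithmetic inputs needed are $F_{K,2}=K$, $F_{K,3}=K^2+1$, and $F_{K,4}=K^3+2K$.

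Suppose first that $K$ is odd. Modulo $2$ we have $F_{K,3}=K^2+1\equiv0$ and $F_{K,4}=K^3+2K\equiv1$, so $\pi_K(2)\mid3$; since $\pi_K(2)\ne1$ this forces $\pi_K(2)=3$. To see that $e^\ast=1$, note that $\pi_K(4)=3$ would require $F_{K,3}=K^2+1\equiv0\pmod4$, which is impossible because $K^2\equiv1\pmod4$ for odd $K$, whence $F_{K,3}\equiv2\pmod4$. Thus $\pi_K(4)\ne\pi_K(2)$, so $e^\ast=1$, and Theorem~\ref{W-S-S} gives $\pi_K(2^e)=2^{e-1}\pi_K(2)=3\cdot2^{e-1}$ for every $e\ge1$.

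Now suppose $K$ is even and set $f=\nu_2(K)\ge1$. Modulo $2$ we have $F_{K,2}=K\equiv0$ and $F_{K,3}=K^2+1\equiv1$, so $\pi_K(2)\mid2$, and since $\pi_K(2)\ne1$ we get $\pi_K(2)=2$. More generally, $\pi_K(2^e)=2$ if and only if $(F_{K,2},F_{K,3})\equiv(0,1)\pmod{2^e}$, i.e.\ if and only if $2^e\mid K$ (the divisibility $2^e\mid K^2$ being then automatic); hence $\pi_K(2^e)=2$ precisely for $1\le e\le f$, so the threshold is $e^\ast=f$. For $e\ge f$, Theorem~\ref{W-S-S} yields $\pi_K(2^e)=2^{e-f}\pi_K(2)=2^{e-f+1}$; for $1\le e\le f$ we have already shown $\pi_K(2^e)=2$ outright. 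Since $\nu_2(\gcd(K,2^e))=\min(e,f)$, both cases are summarized by $\pi_K(2^e)=2^{e+1-\min(e,f)}=2^{e+1-\nu_2(\gcd(K,2^e))}$, which is the assertion.

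Since the arithmetic here is light, the only points that genuinely need care are: (a) invoking correctly the standard period-detection principle for two-term recurrences, which is what allows the statement ``$\pi_K(2^e)=3$'' to be replaced by the checkable congruence ``$(F_{K,3},F_{K,4})\equiv(0,1)\pmod{2^e}$''; and (b) handling the range $e\le\nu_2(K)$, where Theorem~\ref{W-S-S} carries no content and one must instead observe directly that $2^e\mid K$ reduces the recurrence to $F_{K,n}\equiv F_{K,n-2}\pmod{2^e}$ and hence gives period $2$. For a self-contained treatment of the even case not citing Theorem~\ref{W-S-S}, the work would instead move to a $2$-adic lifting computation: writing $M^{2^k}=a_kI+b_kM$ for the companion matrix $M$ of the recurrence, one checks by induction that $a_k$ is odd, $\nu_2(b_k)=f+k-1$, and (for $k\ge2$) $\nu_2(a_k-1)=2f+k-2$, so that $M^{2^k}\equiv I\pmod{2^e}$ precisely when $k\ge e-f+1$; establishing that sharp valuation of $a_k-1$, together with the observation that the order of $M$ modulo $2^e$ is a power of $2$, would be the technical heart of that alternative route.
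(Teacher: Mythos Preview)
The paper does not supply its own proof of this theorem; it is quoted from the authors' companion paper \cite{[BL]} (Theorems 4.25 and 4.28 there), so there is no in-paper argument to compare against.

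Your proof is correct and efficient. The reduction to (i) computing $\pi_K(2)$ from the first few terms and (ii) pinning down the threshold exponent $e^\ast$ in Theorem~\ref{W-S-S} is exactly the right strategy, and both computations are clean: for odd $K$ the observation $K^2+1\equiv 2\pmod4$ forces $e^\ast=1$, and for even $K$ the equivalence $\pi_K(2^e)=2\iff 2^e\mid K$ gives $e^\ast=\nu_2(K)$ directly. The only step you use implicitly is that $\pi_K(2^e)=\pi_K(2)$ for all $e\le e^\ast$ (not just $e=e^\ast$), which follows from the standard divisibility $\pi_K(m)\mid\pi_K(m')$ whenever $m\mid m'$; in the even case you sidestep even this by verifying $\pi_K(2^e)=2$ outright for each $e\le f$. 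Your closing remark about the alternative matrix-lifting route is accurate but unnecessary here, since Theorem~\ref{W-S-S} is already available in the paper.
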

\noindent In an effort to characterize the behavior of $\pi_K(p)$ for a prime $p$, Renault \cite{Renault} develops a trichotomy depending on the quadratic residues of $K^2+4$ modulo $p$. 
\begin{theorem}[Renault \cite{Renault}]\label{quad_res}
    Let $p$ be an odd prime. 
    \begin{itemize}
        \item If $K^2+4$ is a nonzero quadratic residue modulo $p$, then $\pi_K(p) \mid  (p-1)$.
        \item If $K^2+4$ is a quadratic non-residue modulo $p$, then $\pi_K(p) \mid  2(p+1)$.
        \item If $p\mid (K^2+4)$, then $\pi_K(p) = p\cdot\text{ord}_p(2^{-1}K)$.
    \end{itemize}   
\end{theorem}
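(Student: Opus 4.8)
The plan is to reformulate $\pi_K(p)$ as the multiplicative order of the companion matrix $M=\begin{pmatrix}K&1\\1&0\end{pmatrix}$ in $\mathrm{GL}_2(\mathbb{F}_p)$ and then read off that order from the eigenvalues of $M$, splitting into the three stated cases according to how the characteristic polynomial $x^2-Kx-1$---whose discriminant is exactly $K^2+4$---factors modulo $p$. First I would record the identity $M^n=\begin{pmatrix}F_{K,n+1}&F_{K,n}\\F_{K,n}&F_{K,n-1}\end{pmatrix}$, which follows by an immediate induction from the recurrence. Since $\det M=-1$ is a unit modulo $p$, the reduced sequence $(F_{K,n}\bmod p)$ is purely periodic, and using $F_{K,n+1}=KF_{K,n}+F_{K,n-1}$ one checks that $M^n\equiv I\pmod p$ exactly when $F_{K,n}\equiv0$ and $F_{K,n+1}\equiv1$; hence $\pi_K(p)=\ord(M\bmod p)$. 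From there everything is linear algebra over $\mathbb{F}_p$ and its quadratic extension.

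When $K^2+4$ is a nonzero quadratic residue, $x^2-Kx-1$ has two distinct roots $\sigma,\tau\in\mathbb{F}_p$, both nonzero since $\sigma\tau=-1$, so $M$ is diagonalizable over $\mathbb{F}_p$ with eigenvalues in $\mathbb{F}_p^{\times}$; then $M^n=I$ iff $\sigma^n=\tau^n=1$, giving $\pi_K(p)=\lcm(\ord(\sigma),\ord(\tau))\mid p-1$. When $K^2+4$ is a non-residue, $x^2-Kx-1$ is irreducible over $\mathbb{F}_p$, its roots $\sigma,\tau\in\mathbb{F}_{p^2}^{\times}$ are swapped by Frobenius so $\tau=\sigma^p$, and diagonalizing over $\mathbb{F}_{p^2}$ gives $M^n=I$ iff $\sigma^n=\tau^n=1$, which (since $\tau=\sigma^p$) is simply $\sigma^n=1$; thus $\pi_K(p)=\ord(\sigma)$, and the key extra ingredient is the norm relation $\sigma^{p+1}=\sigma\cdot\sigma^p=\sigma\tau=-1$, whence $\sigma^{2(p+1)}=1$ and $\pi_K(p)\mid 2(p+1)$. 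When $p\mid K^2+4$ the discriminant vanishes and $x^2-Kx-1=(x-\lambda)^2$ with $\lambda=2^{-1}K$ (here $p$ is odd and does not divide $K$, for otherwise $p\mid4$, so $\lambda\in\mathbb{F}_p^{\times}$); because the off-diagonal entries of $M$ equal $1$ we have $M\neq\lambda I$, so $M$ is a single Jordan block, conjugate to $\begin{pmatrix}\lambda&1\\0&\lambda\end{pmatrix}$, whose $n$-th power is $\begin{pmatrix}\lambda^n&n\lambda^{n-1}\\0&\lambda^n\end{pmatrix}$; this equals $I$ iff $\lambda^n=1$ and $p\mid n$, i.e.\ iff $\lcm(p,\ord_p(\lambda))\mid n$, and since $\ord_p(\lambda)\mid p-1$ is coprime to $p$ this yields $\pi_K(p)=p\cdot\ord_p(\lambda)=p\cdot\ord_p(2^{-1}K)$.

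I expect the only step requiring real care, beyond bookkeeping, to be the non-residue case: the bare diagonalization over $\mathbb{F}_{p^2}$ gives merely $\pi_K(p)\mid p^2-1$, and one must bring in the norm relation $\sigma^{p+1}=-1$ (equivalently, that the constant term of $x^2-Kx-1$ is $-1$) to upgrade this to $2(p+1)$, which is precisely the form in which the bound is useful later. A secondary point to watch is the verification in the ramified case that $M$ is genuinely non-semisimple, so that the factor $p$ in $\pi_K(p)$ truly appears; this is also where oddness of $p$ is used, both to make sense of $2^{-1}$ and to ensure that $p$ does not divide $K$.
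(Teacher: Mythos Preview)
Your argument is correct in every case. The paper, however, does not supply its own proof of this theorem: it is quoted verbatim from Renault \cite{Renault} as a preliminary result and used as a black box throughout Sections~3--5. So there is nothing in the present paper to compare against.

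For what it is worth, your approach---identifying $\pi_K(p)$ with the order of the companion matrix $M$ in $\mathrm{GL}_2(\mathbb{F}_p)$ and then reading that order off the eigenstructure of $M$ over $\mathbb{F}_p$ or $\mathbb{F}_{p^2}$---is exactly the standard route, and is essentially how Renault derives the trichotomy (he phrases it in terms of the roots $\sigma,\tau$ of $x^2-Kx-1$ and the Binet representation $F_{K,n}=(\sigma^n-\tau^n)/(\sigma-\tau)$, which is the same computation after diagonalizing $M$). Your two flagged ``care points'' are well chosen: the norm relation $\sigma^{p+1}=\sigma\tau=-1$ in the inert case is precisely what sharpens $p^2-1$ to $2(p+1)$, and the non-semisimplicity check in the ramified case is what guarantees the factor $p$ and hence the exact equality $\pi_K(p)=p\cdot\ord_p(2^{-1}K)$ rather than a mere divisibility.
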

In light of Theorem \ref{quad_res}, it is possible to completely determine $\pi_K(p)$ for primes that divide $K^2+4$.
\begin{theorem}\label{ord=4}
    If $p$ is an odd prime and $p\mid (K^2+4)$, then, $\text{ord}_p(2^{-1}K)=4$ and $\pi_K(p)=4p$.
\end{theorem}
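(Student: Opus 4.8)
The second assertion is immediate from the third bullet of Theorem \ref{quad_res} once the order statement is known, so the plan is to focus entirely on showing $\ord_p(2^{-1}K) = 4$. The first thing to check is that $2^{-1}K$ even makes sense as a unit modulo $p$: if $p \mid K$, then from $p \mid K^2+4$ we would get $p \mid 4$, contradicting that $p$ is an odd prime. Hence $\gcd(K,p)=1$ and, since $p$ is odd, $2^{-1}K$ is a well-defined nonzero residue class modulo $p$.

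Next I would set $a \equiv 2^{-1}K \pmod p$ and compute its square. From $p \mid K^2+4$ we have $K^2 \equiv -4 \pmod p$, so
\[
a^2 \equiv 4^{-1}K^2 \equiv 4^{-1}\cdot(-4) \equiv -1 \pmod p .
\]
In particular $a^4 \equiv 1 \pmod p$, so $\ord_p(a)$ divides $4$ and is therefore $1$, $2$, or $4$. If $\ord_p(a)$ were $1$ or $2$, then $a^2 \equiv 1 \pmod p$, which combined with $a^2 \equiv -1 \pmod p$ forces $p \mid 2$, impossible for an odd prime. Therefore $\ord_p(a) = 4$, i.e. $\ord_p(2^{-1}K) = 4$.

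Finally, applying the third case of Theorem \ref{quad_res} (which is available precisely because $p \mid K^2+4$) gives $\pi_K(p) = p\cdot\ord_p(2^{-1}K) = 4p$. I do not anticipate any real obstacle here; the only point requiring a moment's care is justifying that $2^{-1}K$ is a unit, and the rest is the short computation $a^2 \equiv -1$ together with the elementary fact that $-1$ has order $4$ modulo any odd prime for which it is a square (indeed $a$ is a square root of $-1$, and any such element has order exactly $4$).
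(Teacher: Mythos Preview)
Your proof is correct and follows essentially the same route as the paper: both arguments reduce to the observation that $(2^{-1}K)^2\equiv -1\pmod p$, whence the order divides $4$ but cannot be $1$ or $2$ since $p$ is odd. Your write-up is in fact slightly tidier---you go straight to $a^2\equiv -1$ rather than first establishing $a^4\equiv 1$ via the factorization $K^4=(K^2+4)(K^2-4)+16$, and you include the (worthwhile) check that $2^{-1}K$ is a unit modulo $p$, which the paper leaves implicit.
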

\begin{proof}
    Notice that $K^4 = (K^2+4)(K^2-4) + 16$, hence $K^4 \equiv 16 \pmod{K^2+4}$. It follows that $(2^{-1}K)^4 = \frac{K^4}{16} \equiv 1 \pmod{K^2+4}$. Similarly, if a prime $p \mid (K^2+4)$, then $(2^{-1}K)^4 \equiv 1 \pmod{p}$. Thus, $\text{ord}_p(2^{-1}K)\mid 4$. Notice that $K^2 \equiv -4 \pmod{K^2+4}$. It follows that $(2^{-1}K)^2 = \frac{K^2}{4} \equiv -1 \pmod{K^2+4}$. Hence $(2^{-1}K)^2 \equiv -1 \pmod p$ for all $p\mid (K^2+4)$. The condition that $(2^{-1}K)^2 \equiv -1 \equiv 1 \pmod p$ is equivalent to the condition that $p=2$, contradicting the assumption that $p$ is an odd prime. Thus, if $p\neq2$, then $\text{ord}_p(2^{-1}K)=4$.
\end{proof}
\noindent Harrington \& Jones \cite{Harrington} discovered the value of $\pi_K(2)$ when $K$ is an even integer.
\begin{theorem}[Renault \cite{Renault} and Harrington \& Jones \cite{Harrington}]\label{Keven2}
    If $K$ is even, (i.e. if $2\mid (K^2+4)$), then $\pi_K(2)=2$.
\end{theorem}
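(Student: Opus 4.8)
The plan is to reduce the defining recurrence modulo $2$. Since $K$ is even, we have $K\equiv 0\pmod 2$, so for every $n\geq 2$ the recursion collapses to
\[
F_{K,n}=KF_{K,n-1}+F_{K,n-2}\equiv F_{K,n-2}\pmod 2.
\]
Thus, modulo $2$, the sequence is completely determined by its first two terms together with the rule that each term equals the term two places earlier.

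Next I would unwind this: from $F_{K,0}\equiv 0$ and $F_{K,1}\equiv 1$, an immediate induction gives $F_{K,2k}\equiv 0$ and $F_{K,2k+1}\equiv 1$ for all $k\geq 0$. Hence the reduced sequence is exactly $0,1,0,1,\ldots$, which is periodic with period dividing $2$.

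Finally, to see that the period equals $2$ rather than $1$, note that period $1$ would require $F_{K,0}\equiv F_{K,1}\pmod 2$, i.e. $0\equiv 1\pmod 2$, which is false; hence $\pi_K(2)=2$. (Alternatively, this is the $e=1$ case of Theorem~\ref{powersof2}, since $\nu_2(\gcd(K,2))=1$ for even $K$ gives $\pi_K(2)=2^{1+1-1}=2$.) There is no real obstacle here: the only point requiring any care is adhering to the convention that the period is a positive integer and explicitly excluding the degenerate value $1$.
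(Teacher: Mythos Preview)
Your argument is correct. The paper does not supply its own proof of this statement; it is simply quoted from Renault and Harrington \& Jones, so there is nothing to compare against. Your direct reduction of the recurrence modulo $2$ is the natural elementary proof, and your alternative appeal to the $e=1$ case of Theorem~\ref{powersof2} is also valid.
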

\noindent For the proof of the main result, it is important to also understand the nature of $\pi_K(2)$ when $K$ is odd.
\begin{theorem}[Renault \cite{Renault}]\label{Kodd2}
     If $K$ is odd, then $\pi_K(2)=3$.
\end{theorem}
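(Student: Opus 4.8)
The plan is to reduce the defining recurrence modulo $2$ and exploit the fact that an odd $K$ is congruent to $1$. First I would observe that if $K$ is odd, then $K\equiv 1\pmod 2$, so for every $n\geq 2$ we have $F_{K,n}=KF_{K,n-1}+F_{K,n-2}\equiv F_{K,n-1}+F_{K,n-2}\pmod 2$. Thus the sequence $\left(F_{K,n}\bmod 2\right)_{n\geq 0}$ obeys exactly the ordinary Fibonacci recurrence modulo $2$, with the same initial values $F_{K,0}=0$ and $F_{K,1}=1$. In particular it is independent of which odd $K$ we chose, and $\pi_K(2)$ coincides with the Pisano period $\pi(2)$ of the classical Fibonacci sequence.

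Next I would compute the initial segment directly: modulo $2$ the terms are $F_{K,0}\equiv 0$, $F_{K,1}\equiv 1$, $F_{K,2}\equiv 1$, $F_{K,3}\equiv 0$, $F_{K,4}\equiv 1$. Since the sequence satisfies a second-order linear recurrence, it is determined by any consecutive pair of terms, so from $\left(F_{K,3},F_{K,4}\right)\equiv(0,1)\equiv\left(F_{K,0},F_{K,1}\right)\pmod 2$ it follows that the sequence is purely periodic with period dividing $3$. Because $3$ is prime, the exact period is either $1$ or $3$.

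Finally I would rule out period $1$: a period of $1$ would force $F_{K,1}\equiv F_{K,0}\pmod 2$, i.e. $1\equiv 0\pmod 2$, which is false. Hence $\pi_K(2)=3$. There is essentially no obstacle here — the argument is a short explicit computation — the only point requiring care is the verification of minimality (excluding period $1$), which I handle above; everything else follows from the reduction of the recurrence modulo $2$ and the fact that a binary recurrence sequence modulo $m$ is determined by one consecutive pair of residues.
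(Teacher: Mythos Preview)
Your proof is correct. The paper does not supply its own proof of this statement (it is quoted from Renault without argument), but your direct reduction-and-computation approach is precisely the method the paper uses for the analogous results on $\pi_K(3)$ and $\pi_K(4)$, where the residues are simply listed and the period read off.
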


\begin{theorem}\label{K3}
    If $3\mid K$, then $\pi_K(3)=2$. Otherwise, $\pi_K(3)=8$.
\end{theorem}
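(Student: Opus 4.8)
The plan is to split on whether $3\mid K$. If $3\mid K$, then modulo $3$ the recurrence reduces to $F_{K,n}\equiv F_{K,n-2}\pmod 3$, so the residue sequence is $0,1,0,1,\dots$; this is nonconstant with shortest period $2$, so $\pi_K(3)=2$. (Alternatively, $K^2+4\equiv 1\pmod 3$ is a nonzero quadratic residue, so Theorem \ref{quad_res} gives $\pi_K(3)\mid 2$, and equality follows from nonconstancy.)

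If $3\nmid K$, I would apply the trichotomy of Theorem \ref{quad_res} at $p=3$. Since every square is $0$ or $1$ modulo $3$, we have $K^2\equiv 1$ and hence $K^2+4\equiv 2\pmod 3$, a quadratic non-residue (in particular $3\nmid K^2+4$), so Theorem \ref{quad_res} yields $\pi_K(3)\mid 2(3+1)=8$. It then remains to exclude the proper divisors $1,2,4$ of $8$. Periods $1$ and $2$ are ruled out at once since $F_{K,1}=1\not\equiv 0$ and $F_{K,2}=K\not\equiv 0\pmod 3$. For period $4$ I would compute $F_{K,3}=K^2+1$ and $F_{K,4}=K(K^2+2)$; since $K^2\equiv 1$ this gives $F_{K,4}\equiv 0$ but $F_{K,5}=KF_{K,4}+F_{K,3}\equiv K^2+1\equiv 2\pmod 3$, so $(F_{K,4},F_{K,5})\not\equiv(0,1)$ and $4$ is not a period. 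Hence $\pi_K(3)=8$.

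I do not expect a genuine obstacle; the statement is elementary. The only mild point of care is that Theorem \ref{quad_res} provides a divisibility bound, not the exact period, so one must explicitly eliminate the proper divisors of $8$. Equivalently, one could simply tabulate the first several residues $0,1,K,K^2+1,0,\dots$ modulo $3$ in the two subcases $K\equiv 1$ and $K\equiv 2\pmod 3$ and check directly that $(F_{K,8},F_{K,9})\equiv(0,1)$ while no earlier consecutive pair equals $(0,1)$, which gives $\pi_K(3)=8$ without invoking the trichotomy at all.
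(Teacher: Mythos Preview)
Your argument is correct. The paper's own proof takes the purely tabulative route you describe at the end: it simply lists the residues modulo $3$ in the three cases $K\equiv 0,1,2\pmod 3$ and reads off the period ($0,1,\circlearrowleft$ for $3\mid K$; $0,1,1,2,0,2,2,1,\circlearrowleft$ for $K\equiv 1$; $0,1,2,2,0,2,1,1,\circlearrowleft$ for $K\equiv 2$). Your primary argument instead invokes Theorem~\ref{quad_res} to obtain $\pi_K(3)\mid 8$ and then rules out the proper divisors by checking $F_{K,2}\equiv K\not\equiv 0$ and $(F_{K,4},F_{K,5})\equiv(0,2)$; this is a perfectly valid and slightly more structural variant, with the small trade-off that it leans on a deeper prior result for a fact that is verifiable by hand in eight terms.
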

\begin{proof}
    Consider the $K$-Fibonacci sequence modulo $3$. If $3\mid K$,  the sequence becomes $0, 1,\circlearrowleft$, with a period of $2$. If $K\equiv1\pmod3$, the sequence is $0, 1, 1, 2, 0, 2, 2, 1, 0, 1, \circlearrowleft$
    with a period of $8$. If $K\equiv2\pmod3$, the sequence is $0, 1, 2, 2, 0, 2, 1, 1, 0, 1, \circlearrowleft$
    again with a period of $8$.
\end{proof}
\begin{theorem}\label{corollary3.1}
    For all odd primes $p$, if $q$ is a prime such that $q\mid \pi_K(p)$, then $q \varleq p$, with equality only if $p\mid (K^2+4)$.
\end{theorem}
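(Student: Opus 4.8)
The plan is to reduce everything to Renault's trichotomy (Theorem~\ref{quad_res}), splitting into the three cases according to the quadratic character of $K^2+4$ modulo the odd prime $p$, and in each case bound the prime divisors of $\pi_K(p)$ directly.

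First, suppose $K^2+4$ is a nonzero quadratic residue modulo $p$. Then $\pi_K(p)\mid(p-1)$, so any prime $q\mid\pi_K(p)$ satisfies $q\le p-1<p$, which gives the strict inequality and in particular rules out $q=p$. Second, suppose $K^2+4$ is a quadratic non-residue modulo $p$. Then $\pi_K(p)\mid 2(p+1)$, so $q\mid 2(p+1)$. Since $2(p+1)\equiv 2\pmod p$ and $p$ is odd, $p\nmid 2(p+1)$, so $q\ne p$. If $q=2$ then $q<p$ because $p\ge 3$. If $q$ is odd, then $q\mid(p+1)$; but $p+1$ is even while $q$ is odd, so $q$ is a proper divisor of $p+1$, whence $q\le (p+1)/2<p$. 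Either way $q<p$. Third, suppose $p\mid(K^2+4)$. Here Theorem~\ref{ord=4} applies (note $p\nmid K$, since $p\mid(K^2+4)$ and $p\mid K$ would force $p\mid 4$), giving $\pi_K(p)=4p$, whose only prime divisors are $2$ and $p$. Both are $\le p$, and $q=p$ genuinely occurs in this branch — which is exactly the claimed equality case.

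Combining the three cases: a prime $q\mid\pi_K(p)$ always satisfies $q\le p$, and $q=p$ is possible only in the third case, i.e.\ only when $p\mid(K^2+4)$, as asserted. Since the trichotomy of Theorem~\ref{quad_res} is exhaustive and mutually exclusive, this completes the argument.

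I do not expect a real obstacle here; the one place that needs care is the non-residue case, where one must check both that $p\nmid 2(p+1)$ and that an odd $q$ dividing $p+1$ cannot equal $p+1$ itself (since $p+1$ is even), so that the bound $q<p$ is honest rather than merely $q\le p+1$.
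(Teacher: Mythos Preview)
Your proof is correct and follows essentially the same approach as the paper: both split into the three cases of Renault's trichotomy (Theorem~\ref{quad_res}) and bound the prime divisors of $\pi_K(p)$ in each. The only cosmetic differences are that in the non-residue case the paper simply notes $p+1$ is composite (so its prime factors are $<p$), while you separate out $q=2$ and argue proper divisibility; and in the case $p\mid(K^2+4)$ you invoke Theorem~\ref{ord=4} to get $\pi_K(p)=4p$ directly, whereas the paper uses the more general observation that $\mathrm{ord}_p(2^{-1}K)\mid(p-1)$.
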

\begin{proof}
    If $K^2+4$ is a nonzero quadratic residue modulo $p$, then $q\mid (p-1)$ and $q<p$. If $K^2+4$ is a quadratic non-residue, then $q\mid 2(p+1)$. Since $p$ is an odd prime, $p+1$ is composite, and is therefore composed of prime factors smaller than $p$. Finally, if $p\mid(K^2+4)$, then by Theorem \ref{quad_res} $\pi_K(p) = p\cdot\text{ord}_p(2^{-1}K)$. It is clear that $p\mid \pi_K(p)$. Since the integers modulo $p$ form a cyclic group of order at most $p-1$, it follows that $\text{ord}_p(2^{-1}K)\mid (p-1)$. Hence, $q \varleq p$.
\end{proof}

\section{Proof of the K-Fixed Point Theorem}

\begin{theorem}
    If $K\equiv\pm1\pmod{6}$ and $m=24\cdot p_1^{j_1}\cdots p_r^{j_r}$ for $j_i=0,1,2,\ldots$, where $p_1^{a_1}\cdots p_r^{a_r}$ is the prime factorization of $K^2+4$, then $\pi_K(m)=m$.
\end{theorem}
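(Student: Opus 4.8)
The plan is to reduce $\pi_K(m)$ to a least common multiple of prime-power contributions via Theorem~\ref{lcm}, and then evaluate each contribution with the preliminary results. First I would record two elementary consequences of the hypothesis: $K\equiv\pm1\pmod 6$ is equivalent to $K$ being odd and $3\nmid K$. Hence $K^2+4$ is odd, and since $K^2+4\equiv K^2+1\pmod 3$ is never $\equiv 0\pmod 3$, every prime $p_i$ dividing $K^2+4$ satisfies $p_i\geq 5$. In particular the prime factorization of $m=24\cdot p_1^{j_1}\cdots p_r^{j_r}$ is $2^3\cdot 3\cdot\prod_{j_i\geq 1}p_i^{j_i}$, with the $p_i$ distinct primes coprime to $6$.

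Next, apply Theorem~\ref{lcm} to obtain $\pi_K(m)=\lcm[\pi_K(2^3),\pi_K(3),\pi_K(p_1^{j_1}),\ldots,\pi_K(p_r^{j_r})]$, discarding trivial factors $\pi_K(p_i^{0})=\pi_K(1)=1$. Evaluate termwise: by Theorem~\ref{powersof2} with $K$ odd, $\pi_K(2^3)=3\cdot 2^{2}=12$; by Theorem~\ref{K3} with $3\nmid K$, $\pi_K(3)=8$; and for each $i$ with $j_i\geq 1$, Theorems~\ref{quad_res} and~\ref{ord=4} give $\pi_K(p_i)=p_i\cdot\ord_{p_i}(2^{-1}K)=4p_i$. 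To pass to higher powers, invoke Theorem~\ref{H&J}: no odd prime divisor of $K^2+4$ is a $K$-Wall-Sun-Sun prime, so $\pi_K(p_i)\neq\pi_K(p_i^2)$, whence the maximal exponent $e$ of Theorem~\ref{W-S-S} equals $1$; therefore $\pi_K(p_i^{j_i})=p_i^{j_i-1}\pi_K(p_i)=4p_i^{j_i}$.

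Finally, assemble the lcm. Since $12=2^2\cdot 3$, $8=2^3$, and each $4p_i^{j_i}=2^2 p_i^{j_i}$ with the $p_i$ distinct primes coprime to $6$, the $2$-part of the lcm is $2^3$, the $3$-part is $3$, and the $p_i$-part is $p_i^{j_i}$; thus $\pi_K(m)=2^3\cdot 3\cdot\prod_{j_i\geq 1}p_i^{j_i}=24\cdot p_1^{j_1}\cdots p_r^{j_r}=m$.

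The argument is essentially a clean assembly of the cited results, so I do not expect a serious obstacle; the one step meriting care is the passage from $\pi_K(p_i)$ to $\pi_K(p_i^{j_i})$, which relies on Harrington \& Jones's Theorem~\ref{H&J} to force $e=1$ for \emph{every} odd prime divisor of $K^2+4$. This is exactly what pins each contribution to $4p_i^{j_i}$ rather than to a proper divisor, and hence what makes the exponents $j_i$ propagate correctly through the least common multiple; without it the right-hand side could collapse to a proper divisor of $m$.
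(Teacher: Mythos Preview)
The proposal is correct and follows essentially the same approach as the paper: decompose $\pi_K(m)$ via Theorem~\ref{lcm}, evaluate $\pi_K(2^3)=12$, $\pi_K(3)=8$, and $\pi_K(p_i^{j_i})=4p_i^{j_i}$ using the cited preliminaries, then take the lcm. Your version is slightly more explicit in justifying that the $p_i$ are coprime to $6$ and that the Wall--Sun--Sun exponent $e$ equals $1$, but the argument is structurally identical to the paper's.
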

\begin{proof}
    Suppose $K\equiv\pm1\pmod{6}$ and $m=24\cdot p_1^{j_1}\cdots p_r^{j_r}$ for $j_i=0,1,2,\ldots$, where $p_1^{a_1}\cdots p_r^{a_r}$ is the prime factorization of $K^2+4$. By Theorem \ref{lcm}, 
    \begin{align}\label{eq_k=1mod6}
    \pi_K(m)=\lcm\left[\pi_K(2^3), \pi_K(3), \pi_K(p_1^{j_1}), \ldots, \pi_K(p_r^{j_r})\right]
    \end{align}
    By Theorem \ref{H&J}, neither $2$, $3$, nor any $p_i$ is a $K$-Wall-Sun-Sun prime. By Theorems \ref{W-S-S} and \ref{Kodd2}, $\pi_K(2^3)=2^{3-1}\pi_K(2)=2^2\cdot3=12$. By Theorem \ref{K3}, $\pi_K(3)=8$. By Theorem \ref{W-S-S}, $\pi_K(p_i^{j_i})=p_i^{j_i-1}\pi_K(p_i)$ for $i=1, 2, \ldots r$. By Theorem \ref{quad_res}, $\pi_K(p_i)=p_i\cdot\text{ord}_{p_i}(2^{-1}K)$ and by Theorem \ref{ord=4}, $\text{ord}_{p_i}(2^{-1}K)=4$ since $p_i$ is odd and divides $K^2+4$. It follows that 
    \[\pi_K(p_i^{j_i})=p_i^{j_i-1}\pi_K(p_i)=p_i^{j_i-1}\cdot p_i\cdot\text{ord}_{p_i}(2^{-1}K)=4p_i^{j_i}\]
    These may be substituted into equation \eqref{eq_k=1mod6} to obtain:
    \[
    \pi_K(m)=\lcm\left[12,8,4p_1,\ldots,4p_r\right]=24\cdot p_1^{j_1}\cdots p_r^{j_r}=m.
    \]
\end{proof}

\begin{theorem}
    If $K\equiv\pm1\pmod{6}$ and $\pi_K(m)=m$, then $m=24\cdot p_1^{j_1}\cdots p_r^{j_r}$ for $j_i=0,1,2,\ldots$, where $p_1^{a_1}\cdots p_r^{a_r}$ is the prime factorization of $K^2+4$.
\end{theorem}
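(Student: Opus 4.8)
The plan is to run the previous theorem in reverse: assume $\pi_K(m)=m$ and force the shape of $m$. Write $m=q_1^{e_1}\cdots q_s^{e_s}$ for the prime factorization of $m$; by Theorem \ref{lcm} we have $m=\lcm\left[\pi_K(q_1^{e_1}),\ldots,\pi_K(q_s^{e_s})\right]$, so for every prime $p$ the $p$-adic valuation satisfies $v_p(m)=\max_i v_p(\pi_K(q_i^{e_i}))$. The engine of the argument is a size restriction on the primes occurring in each factor $\pi_K(q^e)$: by Theorem \ref{W-S-S} we may write $\pi_K(q^e)=q^{\max(e-f_q,\,0)}\pi_K(q)$ for a fixed integer $f_q\geq1$, so the primes dividing $\pi_K(q^e)$ are $q$ together with the primes dividing $\pi_K(q)$; by Theorem \ref{corollary3.1} the latter are all at most $q$ when $q$ is odd, while $\pi_K(2^e)=3\cdot2^{e-1}$ and $\pi_K(3^e)$ has only $2$ and $3$ as prime factors by Theorems \ref{powersof2} and \ref{K3} (here $K$ is odd with $3\nmid K$, and correspondingly $2,3\nmid K^2+4$). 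Finally, for an odd prime $p\mid K^2+4$ one has $f_p=1$ by Theorem \ref{H&J} and $\pi_K(p)=4p$ by Theorems \ref{quad_res} and \ref{ord=4}, hence $\pi_K(p^e)=4p^e$ exactly; whereas for an odd prime $p\nmid K^2+4$, Theorem \ref{quad_res} gives $p\nmid\pi_K(p)$.

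\emph{Step 1: every prime factor $p\geq5$ of $m$ divides $K^2+4$.} I would prove this by contradiction, letting $p$ be the \emph{largest} prime factor of $m$ with $p\geq5$ and $p\nmid K^2+4$. In the identity $e_p=v_p(m)=\max_i v_p(\pi_K(q_i^{e_i}))$, every term with $q_i<p$ contributes $0$ by the size restriction above; every term with $q_i>p$ also contributes $0$, since by maximality such $q_i$ divides $K^2+4$ and then $\pi_K(q_i^{e_i})=4q_i^{e_i}$ is prime to $p$; and the term $q_i=p$ contributes $v_p(\pi_K(p^{e_p}))=(e_p-f_p)+v_p(\pi_K(p))=e_p-f_p<e_p$, because $v_p(\pi_K(p))=0$ and $f_p\geq1$. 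Thus $\max_i v_p(\pi_K(q_i^{e_i}))<e_p$, a contradiction. Consequently every prime factor $p\geq5$ of $m$ divides $K^2+4$, and for such a prime the exponent $j=v_p(m)$ may be any nonnegative integer.

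\emph{Step 2: $v_2(m)=3$ and $v_3(m)=1$.} If $3\nmid m$, then $v_3(m)=0$; but the $q_i=2$ term contributes $v_3=1$, so also $2\nmid m$, whence $m$ is coprime to $6$ and $\pi_K(m)=\lcm\left[4q_1^{e_1},\ldots,4q_s^{e_s}\right]=4m\neq m$ (recall $m>1$), a contradiction; so $3\mid m$. Then the $q_i=3$ term contributes $v_2=3$, forcing $v_2(m)\geq3$ and in particular $2\mid m$; comparing with $v_2(\pi_K(m))=\max(e_2-1,\,3,\,2)$ gives $e_2=3$. Finally, with $2\mid m$ the $q_i=2$ term contributes $v_3=1$ while the $q_i=3$ term contributes $\max(e_3-f_3,\,0)$ and all remaining terms contribute $0$, so $v_3(m)=e_3=\max(e_3-f_3,\,1)$; since $f_3\geq1$ this forces $e_3=1$. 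Combining the three steps, $m=24\cdot p_1^{j_1}\cdots p_r^{j_r}$, where $p_1,\ldots,p_r$ are the prime divisors of $K^2+4$ and $j_i=v_{p_i}(m)\geq0$.

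The step I expect to be the main obstacle is Step 1. The difficulty is that $\pi_K$ on a prime power is not simply the prime value scaled, so isolating the contribution of a hypothetical ``bad'' prime $p$ to $\pi_K(m)$ requires the precise prime-power formula of Theorem \ref{W-S-S}, the fact that odd divisors of $K^2+4$ are never $K$-Wall--Sun--Sun primes (Theorem \ref{H&J}) so that the larger ``good'' primes contribute exactly $4q_i^{e_i}$ and do not disturb $v_p$, and the bookkeeping device of choosing $p$ maximal so that no larger bad prime interferes. Everything after that is valuation arithmetic built on Theorems \ref{lcm}, \ref{quad_res}, \ref{ord=4}, \ref{powersof2}, \ref{K3}, and \ref{corollary3.1}.
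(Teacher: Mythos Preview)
Your proof is correct and follows the same overall strategy as the paper's: decompose $\pi_K(m)$ via Theorem \ref{lcm}, eliminate prime factors of $m$ not dividing $K^2+4$, and then pin down the exponents of $2$ and $3$ by valuation arithmetic.

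Your execution is in fact tighter than the paper's in two places. First, by taking the \emph{largest} bad prime $p\geq5$ with $p\nmid K^2+4$, you cleanly exclude the possibility that some other bad prime $q_\mu>p$ contributes a factor of $p$ through $\pi_K(q_\mu^{\theta_\mu})$; the paper's version checks only that $q_\lambda\nmid\pi_K(2^{j_1}),\pi_K(3^{j_2}),\pi_K(p_i^{j_i})$ and does not explicitly address $\pi_K(q_\mu^{\theta_\mu})$ for $\mu\neq\lambda$. Second, by carrying the general Wall--Sun--Sun exponent $f_q\geq1$ from Theorem \ref{W-S-S} rather than setting it to $1$, you avoid invoking Theorem \ref{H&J} for the primes $3$ and $q_\lambda$ (which do not divide $K^2+4$ and so lie outside that theorem's hypotheses); your Step 2 argument that $e_3=\max(e_3-f_3,1)$ forces $e_3=1$ works for any $f_3\geq1$.
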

\begin{proof}
Suppose $K\equiv\pm1\pmod{6}$ and $\pi_K(m)=m$. Let $m=2^{j_1}\cdot 3^{j_2}\cdot p_1^{j_1}\cdots p_r^{j_r}\cdot q_1^{\theta_1}\cdots q_s^{\theta_s}$ where $p_1^{a_1}\cdots p_r^{a_r}$ is the prime factorization of $K^2+4$. Note that if $\theta_\lambda\neq0$ then $q_\lambda \mid m$. By Theorem \ref{lcm}, 
    \begin{align}\label{K=1mod6_P1}
    m=\pi_K(m)=\lcm\left[\pi_K(2^{j_1}), \pi_K(3^{j_2}), \pi_K(p_1^{j_1}),\ldots, \pi_K(p_r^{j_r}), \pi_K(q_1^{\theta_1}),\ldots, \pi_K(q_s^{\theta_s})\right]
    \end{align}
    By Theorem \ref{corollary3.1}, if $\rho \mid \pi_K(q_\lambda^{\theta_\lambda})$, then $\rho<q_\lambda$ for all $1 \varleq i \varleq s$. By Theorem \ref{W-S-S}, $\pi_K(q_\lambda^{\theta_\lambda})=q_\lambda^{\theta_\lambda-1}\pi_K(q_\lambda)$ and by Theorem \ref{quad_res}, $\pi_K(q_\lambda) \mid (q_\lambda-1)$ if $K^2+4$ is a quadratic residue, or $\pi_K(q_\lambda) \mid 2(q_\lambda+1)$ if $K^2+4$ is not a quadratic residue modulo $q_\lambda$. Either way, it follows that $q_\lambda \not \mid \pi_K(q_\lambda)$ and $q_\lambda^{\theta_\lambda-1}$ is the largest factor of $q_\lambda$ that divides $\pi_K(q_\lambda^{\theta_\lambda})$, that is, $q_\lambda^{\theta_\lambda-1}\mid\mid\pi_K(q_\lambda^{\theta_\lambda})$. 

    By Theorem \ref{H&J}, neither $2$, $3$, $p_i$, nor $q_\lambda$ are $K$-Wall-Sun-Sun primes. By Theorems \ref{W-S-S} and \ref{Kodd2}, $\pi_K(2^{j_1})=2^{j_1-1}\pi_K(2)=2^{j_1-1}\cdot3$. By Theorems \ref{W-S-S} and \ref{K3}, $\pi_K(3^{j_2})=3^{j_2-1}\pi_K(3)=3^{j_2-1}\cdot8$. By Theorem \ref{W-S-S}, $\pi_K(p_i^{j_i})=p_i^{j_i-1}\pi_K(p_i)$ for $i=1, 2, \ldots r$. By Theorem \ref{quad_res}, $\pi_K(p_i)=p_i\cdot\text{ord}_{p_i}(2^{-1}K)$ and by Theorem \ref{ord=4}, $\text{ord}_{p_i}(2^{-1}K)=4$ since $p_i$ is odd and divides $K^2+4$. It follows that 
    \[\pi_K(p_i^{j_i})=p_i^{j_i-1}\pi_K(p_i)=p_i^{j_i-1}\cdot p_i\cdot\text{ord}_{p_i}(2^{-1}K)=4p_i^{j_i}\]
    
    Note that $q_\lambda\not\mid\pi_K(2^{j_1})=2^{j_1-1}\cdot3$ and $q_\lambda\not\mid\pi_K(3^{j_2})=3^{j_2-1}\cdot2$, and $q_\lambda\not\mid\pi_K(p_i^{j_i})=4p_i^{j_i}$. Thus, $q_\lambda^{\theta_\lambda-1}\mid\mid m$. But this implies that $q_\lambda^{\theta_\lambda}\not\mid m$, which is a contradiction. Equation \eqref{K=3mod6_P1} is reduced to:
    \begin{align}\label{K=3mod6_P2}
        m=\pi_K(m)=\lcm\left[2^{j_1-1}\cdot3, 3^{j_2-1}\cdot8, 4p_1^{j_1},\ldots,4p_r^{j_r}\right]
    \end{align}
    Recall that by hypothesis, $m=2^{j_1}\cdot 3^{j_2}\cdot p_3^{j_3}\cdots p_r^{j_r}\cdot q_1^{\theta_1}\cdots q_s^{\theta_s}$. Suppose $j_1\vargeq4$, then $2^{j_1-1}$ is the largest power of $2$ in \eqref{K=3mod6_P2}, implying $2^{j_1-1}\mid\mid m$. However, this contradicts the hypothesis $2^{j_1}\mid m$. Suppose $j_2\vargeq2$, then $3^{j_2-1}$ is the largest power of $3$ in \eqref{K=1mod6_P1}, implying $3^{j_2-1}\mid\mid m$. However, this contradicts the hypothesis $3^{j_2}\mid m$. Thus, $j_2=1$, and equation \eqref{K=3mod6_P2} is reduced to:
    \[
    m=\pi_k(m)=\lcm\left[2^{j_1-1}\cdot3, 24, 4p_1^{j_1},\ldots,4p_r^{j_r}\right]=24\cdot p_1^{j_1}\cdots p_r^{j_r}\] where $p_1^{a_1}\cdots p_r^{a_r}$ is the prime factorization of $K^2+4$.
\end{proof}

\begin{theorem}
    If $K\equiv2\pmod{4}$ and $m=2^{j_1}$ or $m=2^{j_1+1}p_2^{j_2}\cdots p_r^{j_r}$ for $j_i=0,1,2,\ldots$, where $p_1^{a_1}\cdots p_r^{a_r}$ is the prime factorization of $K^2+4$ and $p_1=2$, then $\pi_K(m)=m$.
\end{theorem}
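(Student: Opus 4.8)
The plan is to compute $\pi_K(m)$ directly, using the multiplicativity of Theorem \ref{lcm} together with the prime-power formulas collected in the Preliminaries, treating the two shapes of $m$ in turn. The structural fact driving everything is that $K\equiv2\pmod4$ makes $K$ even with $\nu_2(K)=1$, so $K^2+4\equiv8\pmod{16}$, giving $\nu_2(K^2+4)=3$; hence in $K^2+4=p_1^{a_1}\cdots p_r^{a_r}$ one has $p_1=2$, $a_1=3$, and $p_2,\dots,p_r$ are odd primes dividing $K^2+4$. For $m=2^{j_1}$ with $j_1\ge1$ the claim is immediate: $\nu_2(\gcd(K,2^{j_1}))=\min(\nu_2(K),j_1)=1$, so Theorem \ref{powersof2} gives $\pi_K(2^{j_1})=2^{j_1+1-1}=2^{j_1}=m$.

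For $m=2^{j_1+1}p_2^{j_2}\cdots p_r^{j_r}$ (with $j_1\ge1$), Theorem \ref{lcm} gives
\[
\pi_K(m)=\lcm\left[\pi_K(2^{j_1+1}),\,\pi_K(p_2^{j_2}),\,\dots,\,\pi_K(p_r^{j_r})\right],
\]
where only the indices with $j_i\ge1$ contribute, since a factor $\pi_K(p_i^{0})=1$ has no effect. As in the previous paragraph, $\pi_K(2^{j_1+1})=2^{j_1+1}$. Each $p_i$ $(2\le i\le r)$ is an odd divisor of $K^2+4$, hence not a $K$-Wall-Sun-Sun prime by Theorem \ref{H&J}, so for $j_i\ge1$ Theorem \ref{W-S-S} gives $\pi_K(p_i^{j_i})=p_i^{j_i-1}\pi_K(p_i)$; and the third clause of Theorem \ref{quad_res} together with Theorem \ref{ord=4} gives $\pi_K(p_i)=p_i\cdot\ord_{p_i}(2^{-1}K)=4p_i$. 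Hence $\pi_K(p_i^{j_i})=4p_i^{j_i}$, and
\[
\pi_K(m)=\lcm\left[2^{j_1+1},\,4p_2^{j_2},\,\dots,\,4p_r^{j_r}\right].
\]
Each $4p_i^{j_i}$ contributes the prime power $p_i^{j_i}$ and the $2$-power $2^2$; since $2^2\mid2^{j_1+1}$, the $2$-part of this lcm is exactly $2^{j_1+1}$ and its odd part is exactly $p_2^{j_2}\cdots p_r^{j_r}$, so $\pi_K(m)=2^{j_1+1}p_2^{j_2}\cdots p_r^{j_r}=m$.

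There is no real obstacle here beyond careful $2$-adic bookkeeping, and it rests on two inputs. First, Theorem \ref{powersof2} must be used in the sharp form $\pi_K(2^e)=2^e$ for $e\ge1$, which holds precisely because $\nu_2(K)=1$ in this congruence class; this is also exactly why the shift $2^{j_1}\mapsto2^{j_1+1}$ is the correct normalization for the second family — it supplies the two factors of $2$ needed to absorb the $4$'s produced by the odd prime divisors of $K^2+4$. Second, Theorem \ref{ord=4} is essential: it forces $\ord_{p_i}(2^{-1}K)$ to be exactly $4$, a pure power of $2$, so that each odd prime power $p_i^{j_i}$ contributes precisely $4p_i^{j_i}$ to the lcm and nothing further — no factor of $8$, no new odd prime. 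Granting these two facts, Theorem \ref{lcm} closes the argument.
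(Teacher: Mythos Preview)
Your proof is correct and follows essentially the same approach as the paper: split via Theorem~\ref{lcm}, evaluate $\pi_K(2^e)$ using Theorem~\ref{powersof2} with $\nu_2(K)=1$, and evaluate $\pi_K(p_i^{j_i})=4p_i^{j_i}$ for the odd prime divisors of $K^2+4$ via Theorems~\ref{H&J}, \ref{W-S-S}, \ref{quad_res}, and \ref{ord=4}. Your explicit note that $j_1\ge1$ is needed in the second family (so that $2^2\mid 2^{j_1+1}$ absorbs the $4$'s) is a welcome clarification that the paper leaves implicit.
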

\begin{proof}
    Suppose $K\equiv2\pmod{4}$ and let $m=2^{j_1}$. By Theorem \ref{H&J}, neither $2$ nor any $p_i$ is a $K$-Wall-Sun-Sun prime. By Theorem \ref{powersof2}, $\pi_K(2^{j_1})=2^{j_1+1-\min(1,j_1)}=2^{j_1}$.\\
    
    \noindent Suppose $m=2^{j_1+1}\cdot p_2^{j_2} \cdots p_r^{j_r}$ for $j_i=0,1,2,\ldots$, where $p_1^{a_1}\cdots p_r^{a_r}$ is the prime factorization of $K^2+4$. By Theorem \ref{lcm}, 
    \begin{align}\label{eq_K=2mod4}
    \pi_K(m)=\lcm\left[\pi_K(2^{j_1+1}),\pi_K(p_2^{j_2}),\ldots,\pi_K(p_r^{j_r})\right]
    \end{align}
    By Theorem \ref{H&J}, neither $2$ nor any $p_i$ is a $K$-Wall-Sun-Sun prime. By Theorem \ref{powersof2}, $\pi_K(2^{j_1+1})=2^{j_1+1}$. By Theorem \ref{quad_res}, $\pi_K(p_i)=p_i\cdot\text{ord}_{p_i}(2^{-1}K)$ and by Theorem \ref{ord=4}, $\text{ord}_{p_i}(2^{-1}K)=4$ since $p_i$ is odd and divides $K^2+4$. It follows that 
    \[\pi_K(p_i^{j_i})=p_i^{j_i-1}\pi_K(p_i)=p_i^{j_i-1}\cdot p_i\cdot\text{ord}_{p_i}(2^{-1}K)=4p_i^{j_i}\]
    These may be substituted into equation \eqref{eq_K=2mod4} to obtain:
    \[
    \pi(m)=\lcm\left[2^{j_1+1},4p_2^{j_2},\ldots,4p_r^{j_r}\right]=2^{j_1+1}\cdot p_2^{j_2} \cdots p_r^{j_r}=m.
    \]
\end{proof}

\begin{theorem}\label{fixed_point_2mod4}
    If $K\equiv2\pmod{4}$ and $\pi_K(m)=m$, then $m=2^{j_1+1}$ or $m=2^{j_1+2}p_2^{j_2}\cdots p_r^{j_r}$ for $j_i=0,1,2,\ldots$, where $p_1^{a_1}\cdots p_r^{a_r}$ is the prime factorization of $K^2+4$ and $p_1=2$.
\end{theorem}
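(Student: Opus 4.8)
The plan is to follow the pattern of the two preceding proofs. Assume $K \equiv 2 \pmod{4}$ and $\pi_K(m) = m$, and write
\[
m = 2^{j}\cdot p_2^{j_2}\cdots p_r^{j_r}\cdot q_1^{\theta_1}\cdots q_s^{\theta_s},
\]
where $p_1^{a_1}\cdots p_r^{a_r}$ is the prime factorization of $K^2+4$ with $p_1 = 2$, and $q_1,\ldots,q_s$ are the prime divisors of $m$ that are odd and do not divide $K^2+4$. (Here $K\equiv 2\pmod{4}$ in fact forces $a_1 = 3$, since $K^2+4 = 8\bigl(2t(t+1)+1\bigr)$ when $K=4t+2$, but that value plays no role below.) By Theorem~\ref{lcm},
\[
m = \lcm\bigl[\,\pi_K(2^{j}),\, \pi_K(p_2^{j_2}),\, \ldots,\, \pi_K(p_r^{j_r}),\, \pi_K(q_1^{\theta_1}),\, \ldots,\, \pi_K(q_s^{\theta_s})\,\bigr].
\]

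The first step is to rule out the $q_\lambda$, exactly as in the case $K \equiv \pm 1 \pmod{6}$. Since $K$ is even, Theorem~\ref{powersof2} shows that $\pi_K(2^{j})$ is a power of $2$. Since $K\not\equiv 0 \pmod{4}$, Theorem~\ref{H&J} says that no odd divisor of $K^2+4$ is a $K$-Wall-Sun-Sun prime, so Theorems~\ref{W-S-S}, \ref{quad_res}, and \ref{ord=4} give $\pi_K(p_i^{j_i}) = 4p_i^{j_i}$ whenever $j_i \ge 1$. Finally, by Theorems~\ref{corollary3.1} and \ref{W-S-S}, every prime factor of $\pi_K(q_\lambda^{\theta_\lambda})$ is at most $q_\lambda$, and since $q_\lambda$ does not divide $\pi_K(q_\lambda)$ the $q_\lambda$-adic valuation of $\pi_K(q_\lambda^{\theta_\lambda})$ is strictly smaller than $\theta_\lambda$. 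Supposing $s \ge 1$ and letting $q$ be the largest of $q_1,\ldots,q_s$, a comparison of $q$-adic valuations across the lcm identity then yields $\nu_q(\pi_K(m)) < \nu_q(m)$, contradicting $\pi_K(m) = m$. Hence $s = 0$ and $m = 2^{j}\,p_2^{j_2}\cdots p_r^{j_r}$ for some $j \ge 0$.

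The second step substitutes explicit values and does the bookkeeping. As $K \equiv 2 \pmod{4}$ we have $\nu_2(K) = 1$, so Theorem~\ref{powersof2} gives $\pi_K(2^{j}) = 2^{j}$ for every $j \ge 1$; together with $\pi_K(p_i^{j_i}) = 4p_i^{j_i}$ this reduces the identity to
\[
m = \lcm\bigl[\,\pi_K(2^{j}),\, \pi_K(p_2^{j_2}),\, \ldots,\, \pi_K(p_r^{j_r})\,\bigr]
\]
with $\pi_K(2^{j}) = 2^{j}$ and every surviving odd term equal to $4p_i^{j_i}$. If no odd $p_i$ divides $m$, the right side is $2^{j}$, so $m = 2^{j}$; since $m > 1$, this is $m = 2^{j_1 + 1}$ with $j_1 = j - 1 \ge 0$. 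If some $p_i$ divides $m$, then $4 \mid \pi_K(m) = m$ forces $j \ge 2$; conversely, once $j \ge 2$ the $2$-part of the lcm is $2^{\max(j,2)} = 2^{j}$ and the $p_i$-part is $p_i^{j_i}$ for each $i$, so the identity holds automatically, giving $m = 2^{j_1 + 2}\,p_2^{j_2}\cdots p_r^{j_r}$ with $j_1 = j - 2 \ge 0$. These two cases are precisely the claimed list. (The value $j = 1$ together with an odd $p_i$ present is excluded since then $\nu_2(\pi_K(m)) = 2 \ne 1 = \nu_2(m)$, which is also why the exponent of $2$ jumps to $j_1 + 2$ in the second family.)

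I expect the one genuinely delicate point to be the elimination of the $q_\lambda$: the bound $\nu_{q_\lambda}(\pi_K(q_\lambda^{\theta_\lambda})) < \theta_\lambda$ must come from the fact that $q_\lambda$ does not divide $\pi_K(q_\lambda)$ together with the multiplicative structure of Theorem~\ref{W-S-S} alone, since a prime $q_\lambda$ not dividing $K^2+4$ could in principle be a $K$-Wall-Sun-Sun prime and one cannot assume its stabilizing exponent equals $1$. Everything after that reduction is a routine comparison of $2$-adic and $p_i$-adic valuations.
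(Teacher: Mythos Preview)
Your proof is correct and follows the same route as the paper's: decompose $m$, apply the $\lcm$ formula, eliminate the primes $q_\lambda\nmid K^2+4$ by a valuation comparison, and then read off the answer from $\pi_K(2^{j})=2^{j}$ and $\pi_K(p_i^{j_i})=4p_i^{j_i}$. Your handling of the $q_\lambda$-elimination is in fact slightly more careful than the paper's, since you do not assume $q_\lambda$ has stabilizing exponent~$1$ (Theorem~\ref{H&J} says nothing about primes outside $K^2+4$) and you explicitly pass to the largest $q_\lambda$ so that no other $\pi_K(q_\mu^{\theta_\mu})$ can contribute a factor of it.
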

\begin{proof}
    Suppose $K\equiv2\pmod{4}$ and $\pi_K(m)=m$. Let $m=2^{j_1}\cdot p_2^{j_2}\cdots p_r^{j_r}\cdot q_1^{\theta_1}\cdots q_s^{\theta_s}$ where $p_1^{j_1}\cdots p_r^{j_r}$ is the prime factorization of $K^2+4$ and $p_1=2$. Note that if $\theta_\lambda\neq0$ then $q_\lambda \mid m$. By Theorem \ref{lcm}, 
    \begin{align}\label{K=2mod4_P1}
        m=\pi(m)=\lcm\left[\pi_K(2^{j_1}), \pi_K(p_2^{j_2}),\ldots, \pi_K(p_r^{j_r}), \pi_K(q_1^{\theta_1}),\ldots, \pi_K(q_s^{\theta_s})\right]
    \end{align}
    By Theorem \ref{corollary3.1}, if $\rho \mid \pi_K(q_\lambda^{\theta_\lambda})$, then $\rho<q_\lambda$ for all $1 \varleq i \varleq s$. By Theorem \ref{W-S-S}, $\pi_K(q_\lambda^{\theta_\lambda})=q_\lambda^{\theta_\lambda-1}\pi_K(q_\lambda)$ and by \ref{quad_res}, $\pi_K(q_\lambda) \mid (q_\lambda-1)$ if $K^2+4$ is a quadratic residue and $\pi_K(q_\lambda) \mid 2(q_\lambda+1)$ if $K^2+4$ is not a quadratic residue modulo $q_\lambda$. Either way, it follows that $q_\lambda^{\theta_\lambda-1}$ is the largest factor of $q_\lambda$ that divides $\pi_K(q_\lambda^{\theta_\lambda})$, that is, $q_\lambda^{\theta_\lambda-1}\mid\mid\pi_K(q_\lambda^{\theta_\lambda})$. 

    By Theorem \ref{H&J}, neither $2$, $p_i$, nor $q_\lambda$ are $K$-Wall-Sun-Sun primes. By Theorems \ref{W-S-S} and \ref{Keven2}, $\pi_K(2^{j_1})=2^{j_1-1}\pi_K(2)=2^{j_1-1}\cdot2=2^{j_1}$.
    By Theorem \ref{W-S-S}, $\pi_K(p_i^{j_i})=p_i^{j_i-1}\pi_K(p_i)$ for $i=1, 2, \ldots r$. By Theorem \ref{quad_res}, $\pi_K(p_i)=p_i\cdot\text{ord}_{p_i}(2^{-1}K)$ and by Theorem \ref{ord=4}, $\text{ord}_{p_i}(2^{-1}K)=4$ since $p_i$ is odd and divides $K^2+4$. It follows that 
    \[\pi_K(p_i^{j_i})=p_i^{j_i-1}\pi_K(p_i)=p_i^{j_i-1}\cdot p_i\cdot\text{ord}_{p_i}(2^{-1}K)=4p_i^{j_i}\]
    
    Note that $q_\lambda\not\mid\pi_K(2^{j_1})=2^{j_1}$ and $q_\lambda\not\mid\pi_K(3^{j_2})=3^{j_2-1}\cdot8$, and $q_\lambda\not\mid\pi_K(p_i^{j_i})=4p_i^{j_i}$. Thus, $q_\lambda^{\theta_\lambda-1}\mid\mid m$. But this implies that $q_\lambda^{\theta_\lambda}\not\mid m$, which is a contradiction. Equation \eqref{K=2mod4_P1} is reduced to:
    \[
        m=\pi_K(m)=\lcm\left[2^{j_1}, 4p_3^{j_3},\ldots,4p_r^{j_r}\right] = 2^{j_1}\ \text{or}\ 2^{j_1+1}p_2^{j_2}\cdots p_r^{j_r} 
    \]
    where $p_1^{a_1}\cdots p_r^{a_r}$ is the prime factorization of $K^2+4$ and $p_1=2$.
\end{proof}

\begin{theorem}
    If $K\equiv3\pmod{6}$ and $m=6$ or $m=12\cdot p_3^{j_3}\cdots p_r^{j_r}$ for $j_i=0,1,2,\ldots$, where $p_1^{a_1}\cdots p_r^{a_r}$ is the prime factorization of $K^2+4$ and $p_1^{a_1}=2^2$ and $p_2^{a_2}=3$, then $\pi_K(m)=m$.
\end{theorem}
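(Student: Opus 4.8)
The plan is to follow the template of the earlier theorems of this section: factor $m$ into prime powers, evaluate $\pi_K$ on each via the structure results of Section~2, and recombine with Theorem~\ref{lcm}. First observe that $K\equiv 3\pmod 6$ forces $K$ to be odd and divisible by $3$, so $K^2+4$ is odd and $\equiv 1\pmod 3$; hence every prime $p_i$ dividing $K^2+4$ satisfies $p_i\geq 5$, and in the notation of the statement $p_3,\ldots,p_r$ are precisely these prime divisors (the factors $2^2$ and $3$ being split off only to keep the form parallel to the $K\equiv 0\pmod 4$ case). There are two families of moduli, $m=6$ and $m=12\cdot p_3^{j_3}\cdots p_r^{j_r}$, which I would treat separately.

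For $m=6=2\cdot 3$, Theorem~\ref{lcm} gives $\pi_K(6)=\lcm\left[\pi_K(2),\pi_K(3)\right]$; then Theorem~\ref{Kodd2} ($K$ odd) gives $\pi_K(2)=3$ and Theorem~\ref{K3} ($3\mid K$) gives $\pi_K(3)=2$, so $\pi_K(6)=\lcm\left[3,2\right]=6$. Note that $\pi_K(2)=3$ and $\pi_K(3)=2$ are exactly the values forming the $2$-periodic point flagged in Theorem~\ref{K_Fix_Point}, so $6$ is the smallest genuine fixed point in this case.

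For $m=12\cdot p_3^{j_3}\cdots p_r^{j_r}=2^2\cdot 3\cdot p_3^{j_3}\cdots p_r^{j_r}$, Theorem~\ref{lcm} gives
\[
\pi_K(m)=\lcm\left[\pi_K(2^2),\ \pi_K(3),\ \pi_K(p_3^{j_3}),\ \ldots,\ \pi_K(p_r^{j_r})\right].
\]
I would then evaluate each factor. Since $K$ is odd, Theorem~\ref{powersof2} gives $\pi_K(2^2)=3\cdot 2^{2-1}=6$; Theorem~\ref{K3} again gives $\pi_K(3)=2$; and for each $i$, since $p_i$ is an odd prime dividing $K^2+4$ it is not a $K$-Wall-Sun-Sun prime by Theorem~\ref{H&J}, so Theorem~\ref{W-S-S} yields $\pi_K(p_i^{j_i})=p_i^{j_i-1}\pi_K(p_i)$, while Theorem~\ref{ord=4} (together with the third case of Theorem~\ref{quad_res}) gives $\pi_K(p_i)=4p_i$, whence $\pi_K(p_i^{j_i})=4p_i^{j_i}$. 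Substituting,
\[
\pi_K(m)=\lcm\left[6,\ 2,\ 4p_3^{j_3},\ \ldots,\ 4p_r^{j_r}\right]=2^2\cdot 3\cdot p_3^{j_3}\cdots p_r^{j_r}=m.
\]

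The step I expect to require the most care is the $2$-adic bookkeeping in this final least common multiple. Unlike the $K\equiv\pm1\pmod 6$ case, where $\lcm\left[\pi_K(2^3),\pi_K(3)\right]=\lcm\left[12,8\right]=24$ already carries the full numerical constant, here $\pi_K(2^2)=6$ and $\pi_K(3)=2$ each contribute only one factor of $2$, and the extra factor of $2$ that raises the constant from $6$ to $12$ has to come from the terms $\pi_K(p_i^{j_i})=4p_i^{j_i}$. So the points to check are: (a) that $2,3\nmid K^2+4$, so every $p_i$ genuinely falls under the ``$p\mid K^2+4$'' case of Theorem~\ref{quad_res} and Theorem~\ref{ord=4} applies; and (b) the exponent arithmetic, namely that the power of $2$ in the lcm is $\max(1,1,2)=2$, the power of $3$ is $\max(1,0)=1$, and the power of each $p_i$ is $j_i$, so the lcm equals $m$ exactly. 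The remaining manipulations run in parallel with the proofs of the earlier theorems.
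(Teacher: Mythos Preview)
Your proof is correct and mirrors the paper's argument step for step: handle $m=6$ via Theorems~\ref{Kodd2} and~\ref{K3}, then for $m=12\cdot p_3^{j_3}\cdots p_r^{j_r}$ decompose via Theorem~\ref{lcm}, compute $\pi_K(2^2)=6$ from Theorem~\ref{powersof2}, $\pi_K(3)=2$ from Theorem~\ref{K3}, and $\pi_K(p_i^{j_i})=4p_i^{j_i}$ from Theorems~\ref{H\&J}, \ref{W-S-S}, \ref{quad_res}, and~\ref{ord=4}, and take the lcm. Your explicit verification that $2,3\nmid K^2+4$ and your $2$-adic bookkeeping (noting that the second factor of $2$ must come from some $4p_i^{j_i}$) are more careful than the paper's write-up, which leaves these points implicit.
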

\begin{proof}
    Suppose $K\equiv3\pmod{6}$ and $m=6$. Then by Theorem \ref{lcm}, $\pi_K(m)=\lcm\left[\pi_K(2),\pi_K(3)\right]$. By Theorem \ref{Kodd2}, $\pi_K(2)=3$. By Theorem \ref{K3}, $\pi_K(3)=2$. Hence, $\pi_K(6)=\lcm\left[3,2\right]=6$. Note, this also establishes the $2$-periodic point $\pi_K(2)=3 \rightarrow \pi_K(3)=2\circlearrowleft$.\\

    \noindent Suppose $m=12\cdot p_3^{j_3}\cdots p_r^{j_r}$ for $j_i=0,1,2,\ldots$, where $p_1^{a_1}\cdots p_r^{a_r}$ is the prime factorization of $K^2+4$ and $p_1^{a_1}=2^2$ and $p_2^{a_2}=3$. By Theorem \ref{H&J}, neither $2$, $3$, nor any $p_i$ is a $K$-Wall-Sun-Sun prime. By Theorem \ref{lcm}, 
    \begin{align}\label{eq_K=3mod6}
        \pi_K(m)=\lcm\left[\pi_K(2^2),\pi_K(3),\pi_K(p_3^{j_3}),\ldots,\pi_K(p_r^{j_r)}\right]
    \end{align}
    By Theorem \ref{powersof2}, $\pi_K(2^2)=2\cdot3=6$. By Theorem \ref{K3}, $\pi_K(3)=2$. By Theorem \ref{W-S-S}, $\pi_K(p_i^{j_i})=p_i^{j_i-1}\pi_K(p_i)$ for $i=1, 2, \ldots r$. By Theorem \ref{quad_res}, $\pi_K(p_i)=p_i\cdot\text{ord}_{p_i}(2^{-1}K)$ and by Theorem \ref{ord=4}, $\text{ord}_{p_i}(2^{-1}K)=4$ since $p_i$ is odd and divides $K^2+4$. It follows that 
    \[\pi_K(p_i^{j_i})=p_i^{j_i-1}\pi_K(p_i)=p_i^{j_i-1}\cdot p_i\cdot\text{ord}_{p_i}(2^{-1}K)=4p_i^{j_i}\]
    These may be substituted into equation \eqref{eq_K=3mod6} to obtain:
    \[
    \pi_K(m)=\lcm\left[6,2,4p_3,\ldots,4p_r\right]=12\cdot p_3^{j_3}\cdots p_r^{j_r}=m.
    \]
\end{proof}

\begin{theorem}
    If $K\equiv3\pmod{6}$ and $\pi_K(m)=m$, then $m=6$ or $m=12\cdot p_3^{j_3}\cdots p_r^{j_r}$ for $j_i=0,1,2,\ldots$, where $p_1^{a_1}\cdots p_r^{a_r}$ is the prime factorization of $K^2+4$ and $p_1^{a_1}=2^2$ and $p_2^{a_2}=3$.
\end{theorem}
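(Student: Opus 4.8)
The plan is to follow the template of the two preceding converse theorems. Assume $K\equiv3\pmod6$; then $K$ is odd and $3\mid K$, so $K^2+4$ is odd with $K^2+4\equiv1\pmod3$, and hence every prime divisor of $K^2+4$ is at least $5$ — the symbols $p_1^{a_1}=2^2$ and $p_2^{a_2}=3$ in the statement are thus only a labelling convention, the true factorization being $K^2+4=p_3^{a_3}\cdots p_r^{a_r}$. Write $m=2^{j_1}3^{j_2}p_3^{j_3}\cdots p_r^{j_r}q_1^{\theta_1}\cdots q_s^{\theta_s}$, where $q_1<\cdots<q_s$ are the remaining prime divisors of $m$ (each $q_\lambda\geq5$, odd, and not dividing $K^2+4$), and apply Theorem~\ref{lcm} to write $m=\pi_K(m)$ as the least common multiple of the periods of these prime powers. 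I would first record those periods: by Theorem~\ref{powersof2}, $\pi_K(2^a)=3\cdot2^{a-1}$ for $a\geq1$ (so $\pi_K(2)=3$, $\pi_K(2^2)=6$); by Theorem~\ref{K3}, $\pi_K(3)=2$, so by Theorem~\ref{W-S-S} there is a fixed maximal integer $e\geq1$ with $\pi_K(3^b)=3^{\max(0,\,b-e)}\cdot2$ for $b\geq1$; and since each $p_i$ is an odd divisor of $K^2+4$, Theorems~\ref{ord=4} and~\ref{H&J} give $\pi_K(p_i)=4p_i$ with $p_i$ not a $K$-Wall-Sun-Sun prime, so $\pi_K(p_i^c)=4p_i^c$ for $c\geq1$.

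Next I would eliminate the primes $q_\lambda$, exactly as in the proof of Theorem~\ref{fixed_point_2mod4}. Looking at the largest, $q_s$: by Theorem~\ref{corollary3.1} every prime factor of $\pi_K(q_\lambda)$ is strictly smaller than $q_\lambda$ (strict since $q_\lambda\nmid K^2+4$), so $q_s$ divides no $\pi_K(q_\lambda)$, and Theorem~\ref{W-S-S} gives $\nu_{q_s}(\pi_K(q_s^{\theta_s}))\leq\theta_s-1$; since $q_s$ also divides none of $\pi_K(2^{j_1})$, $\pi_K(3^{j_2})$, $\pi_K(p_i^{j_i})$ (built only from $2$, $3$, and the $p_i$), we get $\nu_{q_s}(\pi_K(m))\leq\theta_s-1<\theta_s=\nu_{q_s}(m)$, contradicting $\pi_K(m)=m$. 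Hence $s=0$ and $m=2^{j_1}3^{j_2}p_3^{j_3}\cdots p_r^{j_r}$.

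It then remains to pin down $j_1$ and $j_2$ by comparing $2$-adic and $3$-adic valuations across $\pi_K(m)=m$. In the lcm the $2$-adic valuations of the components are $j_1-1$ (from $\pi_K(2^{j_1})$, present when $j_1\geq1$), $1$ (from $\pi_K(3^{j_2})$, present when $j_2\geq1$), and $2$ (from each $4p_i^{j_i}$ with $j_i\geq1$). If some $j_i\geq1$, the component of valuation $2$ forces $j_1\geq2$ while $\pi_K(2^{j_1})$ contributing only $2^{j_1-1}$ forces $j_1\leq2$, so $j_1=2$; if instead every $j_i=0$ then $m=2^{j_1}3^{j_2}$, and since $\pi_K(2^{j_1})$ carries a factor $3$ while $\pi_K(3^{j_2})$ carries a factor $2$ we must have $j_1,j_2\geq1$, after which $j_1=\max(j_1-1,1)$ forces $j_1=1$. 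In either case the $3$-adic comparison closes the argument: the only components with positive $3$-adic valuation are $\pi_K(2^{j_1})$ (equal to $6$ or $3$, contributing exactly $3^1$) and $\pi_K(3^{j_2})$ (contributing $3^{\max(0,\,j_2-e)}$, at most $3^{j_2-1}$), so $\nu_3(\pi_K(m))=\max\{1,\max(0,\,j_2-e)\}$, which equals $j_2=\nu_3(m)$ only when $j_2=1$. Hence either $j_1=j_2=1$ with all $j_i=0$, giving $m=6$, or $j_1=2$, $j_2=1$ with at least one $j_i\geq1$, giving $m=12\,p_3^{j_3}\cdots p_r^{j_r}$ (the product is necessarily non-empty, since $\pi_K(12)=\lcm[6,2]=6\neq12$); the $p_i$-adic valuations add no further restriction.

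The step I expect to be the main obstacle is controlling the powers of $3$: since $9\mid K$ is possible (for instance $K=9$), the prime $3$ can behave like a $K$-Wall-Sun-Sun prime, and the maximal exponent $e$ of Theorem~\ref{W-S-S} at $p=3$ need not equal $1$. What keeps the argument intact is that this never disturbs the two valuations driving the proof — $\nu_2(\pi_K(3^{j_2}))=1$ and $\nu_3(\pi_K(3^{j_2}))\leq j_2-1$ for every $j_2\geq1$, regardless of $e$ — so the $3$-part of $m$ can never sustain itself and must be supplied, to the extent of one factor $3$, by $\pi_K(2^{j_1})$, which is exactly what forces $j_2=1$.
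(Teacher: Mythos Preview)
Your proof is correct and follows essentially the same route as the paper's: use Theorem~\ref{lcm} to split $m$, eliminate the extraneous primes $q_\lambda$ via Theorems~\ref{W-S-S} and~\ref{corollary3.1}, then compare $2$-adic and $3$-adic valuations to force $j_1\in\{1,2\}$ and $j_2=1$. You are in fact more careful than the paper on two points---you eliminate the $q_\lambda$ by descent on the largest (so that no $\pi_K(q_\mu)$ can contribute a factor $q_s$), and you correctly allow for the possibility that $3$ is a $K$-Wall-Sun-Sun prime (as it is for $K=9$, where $\pi_9(3)=\pi_9(9)=2$), whereas the paper invokes Theorem~\ref{H&J} for $p=3$ even though $3\nmid K^2+4$; your observation that the bound $\nu_3(\pi_K(3^{j_2}))\le j_2-1$ survives regardless of the Wall--Sun--Sun exponent $e$ is exactly what keeps the argument intact.
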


\begin{proof}
    Suppose $K\equiv3\pmod{6}$ and $\pi_K(m)=m$. Let $m=2^{j_1}\cdot 3^{j_2}\cdot p_3^{j_3}\cdots p_r^{j_r}\cdot q_1^{\theta_1}\cdots q_s^{\theta_s}$ where $2^{2}\cdot3\cdot p_3^{j_3} \cdots p_r^{j_r}$ is the prime factorization of $K^2+4$. Note that if $\theta_\lambda\neq0$ then $q_\lambda \mid m$. By Theorem \ref{lcm}, 
    \begin{align}\label{K=3mod6_P1}
        m=\pi(m)=\lcm\left[\pi_K(2^{j_1})\, \pi_K(3^{j_2})\, \pi_K(p_3^{j_3}),\ldots, \pi_K(p_r^{j_r}), \pi_K(q_1^{\theta_1}),\ldots, \pi_K(q_s^{\theta_s})\right]
    \end{align}
    By Theorem \ref{corollary3.1}, if $\rho \mid \pi_K(q_\lambda^{\theta_\lambda})$, then $\rho<q_\lambda$ for all $1 \varleq i \varleq s$. By Theorem \ref{W-S-S}, $\pi_K(q_\lambda^{\theta_\lambda})=q_\lambda^{\theta_\lambda-1}\pi_K(q_\lambda)$ and by \ref{quad_res}, $\pi_K(q_\lambda) \mid (q_\lambda-1)$ if $K^2+4$ is a quadratic residue and $\pi_K(q_\lambda) \mid 2(q_\lambda+1)$ if $K^2+4$ is not a quadratic residue modulo $q_\lambda$. Either way, it follows that $q_\lambda^{\theta_\lambda-1}$ is the largest factor of $q_\lambda$ that divides $\pi_K(q_\lambda^{\theta_\lambda})$, that is, $q_\lambda^{\theta_\lambda-1}\mid\mid\pi_K(q_\lambda^{\theta_\lambda})$. 

    By Theorem \ref{H&J}, neither $2$, $3$, $p_i$, nor $q_\lambda$ are $K$-Wall-Sun-Sun primes. By Theorem \ref{powersof2}, $\pi_K(2^{j_1})=2^{j_1-1}\cdot3$. By Theorems \ref{W-S-S} and \ref{K3}, $\pi_K(3^{j_2})=3^{j_2-1}\pi_K(3)=3^{j_2-1}\cdot2$. By Theorem \ref{W-S-S}, $\pi_K(p_i^{j_i})=p_i^{j_i-1}\pi_K(p_i)$ for $i=1, 2, \ldots r$. By Theorem \ref{quad_res}, $\pi_K(p_i)=p_i\cdot\text{ord}_{p_i}(2^{-1}K)$ and by Theorem \ref{ord=4}, $\text{ord}_{p_i}(2^{-1}K)=4$ since $p_i$ is odd and divides $K^2+4$. It follows that 
    \[\pi_K(p_i^{j_i})=p_i^{j_i-1}\pi_K(p_i)=p_i^{j_i-1}\cdot p_i\cdot\text{ord}_{p_i}(2^{-1}K)=4p_i^{j_i}\]
    
    Note that $q_\lambda\not\mid\pi_K(2^{j_1})=2^{j_1-1}\cdot3$ and $q_\lambda\not\mid\pi_K(3^{j_2})=3^{j_2-1}\cdot2$, and $q_\lambda\not\mid\pi_K(p_i^{j_i})=4p_i^{j_i}$. Thus, $q_\lambda^{\theta_\lambda-1}\mid\mid m$. But this implies that $q_\lambda^{\theta_\lambda}\not\mid m$, which is a contradiction. Equation \eqref{K=3mod6_P1} is reduced to:
    \begin{align}\label{K=3mod6_P2_2}
        m=\pi_K(m)=\lcm\left[2^{j_1-1}\cdot3, 3^{j_2-1}\cdot2, 4p_3^{j_3},\ldots,4p_r^{j_r}\right]
    \end{align}
    Recall that by hypothesis, $m=2^{j_1}\cdot 3^{j_2}\cdot p_3^{j_3}\cdots p_r^{j_r}\cdot q_1^{\theta_1}\cdots q_s^{\theta_s}$. Suppose $j_1\vargeq3$, then $2^{j_1-1}$ is the largest power of $2$ in \eqref{K=3mod6_P2}, implying $2^{j_1-1}\mid\mid m$. However, this contradicts the hypothesis $2^{j_1}\mid m$. Thus, $j_1=1$ or $2$. Suppose $j_2\vargeq2$, then $3^{j_2-1}$ is the largest power of $3$ in \eqref{K=3mod6_P2}, implying $3^{j_2-1}\mid\mid m$. However, this contradicts the hypothesis $3^{j_2}\mid m$. Thus, $j_2=1$, and equation \eqref{K=3mod6_P2} is reduced to:
    \[
    m=\pi_k(m)=\lcm\left[(6\ \text{or}\ 12), 6, 4p_3^{j_3},\ldots,4p_r^{j_r}\right]= 12\cdot p_3^{j_3}\cdots p_r^{j_r}\ \text{or}\ 6\] where $p_1^{a_1}\cdots p_r^{a_r}$ is the prime factorization of $K^2+4$ and $p_1^{a_1}=2^2$ and $p_2^{a_2}=3$.
\end{proof}

\begin{theorem}
    If $K\equiv0\pmod{4}$ and $m=2$ or $m=4\cdot p_2^{j_2}\cdots p_r^{j_r}$ for $j_i=0,1,2,\ldots$, where $p_1^{a_1}\cdots p_r^{a_r}$ is the prime factorization of $K^2+4$ and $p_1^{a_1}=2^2$, then $\pi_K(m)=m$.
\end{theorem}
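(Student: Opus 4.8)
The plan is to follow the same template as the preceding ``forward direction'' theorems in this section: factor $m$, split $\pi_K(m)$ into an $\lcm$ of prime-power Pisano periods via Theorem~\ref{lcm}, evaluate each factor with the results of the Preliminaries, and recombine. First I would record the arithmetic fact that $K\equiv0\pmod 4$ forces $K=4t$, so $K^2+4=4(4t^2+1)$ with $4t^2+1$ odd; hence $\nu_2(K^2+4)=2$, which confirms $p_1^{a_1}=2^2$ and that $p_2,\dots,p_r$ are odd primes dividing $K^2+4$. In particular every hypothesis of Theorems~\ref{quad_res} and~\ref{ord=4} is available for each $p_i$ with $i\geq 2$, and (since $K\geq 4$) there is at least one such odd prime.

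For $m=2$: because $K$ is even, Theorem~\ref{Keven2} gives $\pi_K(2)=2=m$, which disposes of this case and records the smallest fixed point of the family. For $m=4\cdot p_2^{j_2}\cdots p_r^{j_r}$, Theorem~\ref{lcm} gives $\pi_K(m)=\lcm\!\left[\pi_K(4),\pi_K(p_2^{j_2}),\dots,\pi_K(p_r^{j_r})\right]$. By Theorem~\ref{powersof2} with $K$ even and $\nu_2(\gcd(K,4))=2$, one gets $\pi_K(4)=2^{2+1-2}=2$. For each odd $p_i\mid K^2+4$: Theorem~\ref{H&J} says $p_i$ is not a $K$-Wall-Sun-Sun prime, so Theorem~\ref{W-S-S} gives $\pi_K(p_i^{j_i})=p_i^{j_i-1}\pi_K(p_i)$, while Theorems~\ref{quad_res} and~\ref{ord=4} give $\pi_K(p_i)=p_i\cdot\ord_{p_i}(2^{-1}K)=4p_i$, whence $\pi_K(p_i^{j_i})=4p_i^{j_i}$. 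Substituting, $\pi_K(m)=\lcm\!\left[2,4p_2^{j_2},\dots,4p_r^{j_r}\right]=4\cdot p_2^{j_2}\cdots p_r^{j_r}=m$, since the $p_i$ are distinct odd primes and the $2$-part $2^2$ of the $\lcm$ is contributed by any one of the terms $4p_i^{j_i}$.

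The step that needs the most care — and the real difference from the $K\equiv\pm1\pmod 6$ case — is the prime $2$. Here $2$ \emph{is} a $K$-Wall-Sun-Sun prime by Theorem~\ref{H&J}, so $\pi_K$ does not inflate powers of $2$ in the usual way: $\pi_K(4)=\pi_K(2)=2$ rather than $4$. Consequently the factor $2^2$ that $m$ needs in order to be fixed cannot be supplied by the $2$-part of $m$; it must come from $\pi_K(p_i)=4p_i$ for an odd prime $p_i\mid K^2+4$ actually dividing $m$. This is exactly why the exponents on $p_2,\dots,p_r$ must be at least $1$ (equivalently, why the $K$-Fixed Point Theorem writes them as $j_i+1$), and why $m=2$ — not $m=4$ — is the base fixed point of this family; I would make sure the final write-up states this constraint explicitly so that $m=4$, for which $\pi_K(4)=2\neq 4$, is not mistakenly included.
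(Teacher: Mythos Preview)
Your proof is correct and follows essentially the same template as the paper's own argument: split $\pi_K(m)$ via Theorem~\ref{lcm}, compute $\pi_K(4)=2$ and $\pi_K(p_i^{j_i})=4p_i^{j_i}$ using the Preliminaries, and recombine; the only cosmetic difference is that you obtain $\pi_K(4)=2$ from Theorem~\ref{powersof2} whereas the paper invokes Theorem~\ref{H&J} to say $\pi_K(2^2)=\pi_K(2)=2$. Your closing observation that the $2$-part of $m$ must be supplied by some $4p_i^{j_i}$ with $j_i\geq 1$ --- so that $m=4$ itself is \emph{not} a fixed point --- is a point the paper's proof glosses over but which is reflected in the exponents $j_i+1$ of the main $K$-Fixed Point Theorem, and it is good that you flag it.
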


\begin{proof}
    Suppose $K\equiv0\pmod{4}$ and $m=2$. By Theorem \ref{Keven2}, $\pi_K(2)=2$. Suppose $m=4\cdot p_2^{j_2}\cdots p_r^{j_r}$ for $j_i=0,1,2,\ldots$, where $p_1^{a_1}\cdots p_r^{a_r}$ is the prime factorization of $K^2+4$ and $p_1^{j_1}=2^2$. By Theorem \ref{lcm}, 
    \begin{align}\label{eq_K=0mod4}
        \pi_K(m)=\lcm\left[\pi_K(2^2),\pi_K(p_2^{j_2}),\ldots,\pi_K(p_r^{j_r})\right].
    \end{align}
    By Theorem \ref{H&J}, $2$ is a $K$-Wall-Sun-Sun prime, so $\pi_K(2^2)=\pi_K(2)=2$. Also by Theorem \ref{H&J}, no other $p_i$ is a $K$-Wall-Sun-Sun prime. By Theorem \ref{W-S-S}, $\pi_K(p_i^{j_i})=p_i^{j_i-1}\pi_K(p_i)$ for $i=1, 2, \ldots r$. By Theorem \ref{quad_res}, $\pi_K(p_i)=p_i\cdot\text{ord}_{p_i}(2^{-1}K)$ and by Theorem \ref{ord=4}, $\text{ord}_{p_i}(2^{-1}K)=4$ since $p_i$ is odd and divides $K^2+4$. It follows that 
    \[\pi_K(p_i^{j_i})=p_i^{j_i-1}\pi_K(p_i)=p_i^{j_i-1}\cdot p_i\cdot\text{ord}_{p_i}(2^{-1}K)=4p_i^{j_i}\]
    These may be substituted into equation \eqref{eq_K=0mod4} to obtain:
    \[
    \pi_K(m)=\lcm\left[2,4p_2,\ldots,4p_r\right]=4\cdot p_2^{j_2}\cdots p_r^{j_r}=m.
    \]
\end{proof}

\begin{theorem}
    If $\pi_K(m)=m$ and $K\equiv0\pmod{4}$, then $m=2$ or $m=4\cdot p_2^{j_2}\cdots p_r^{j_r}$ for $j_i=0,1,2,\ldots$, where $p_1^{a_1}\cdots p_r^{a_r}$ is the prime factorization of $K^2+4$ and $p_1^{a_1}=2^2$.
\end{theorem}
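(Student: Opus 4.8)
The plan is to follow the template of the three preceding ``only if'' theorems, the one genuinely new feature being that $2$ is now a $K$-Wall-Sun-Sun prime. I would begin with the structural observation that, for $K\equiv 0\pmod 4$, writing $K=4k$ gives $K^2+4=2^2(4k^2+1)$ with $4k^2+1$ odd and at least $5$; hence $p_1^{a_1}=2^2$ exactly and $r\geq 2$, so $K^2+4$ always has at least one odd prime divisor.

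Next, write $m=2^{j_1}p_2^{j_2}\cdots p_r^{j_r}q_1^{\theta_1}\cdots q_s^{\theta_s}$, where $p_2,\dots,p_r$ are the odd primes dividing $K^2+4$ and $q_1,\dots,q_s$ are the primes dividing $m$ but not $K^2+4$ (note that $3$ never divides $K^2+4$, so it is among the $q_\lambda$ when $3\mid m$). Expand $\pi_K(m)$ by Theorem~\ref{lcm}. For each $i$ with $j_i\geq 1$, Theorems~\ref{W-S-S}, \ref{quad_res}, and~\ref{ord=4} give $\pi_K(p_i^{j_i})=4p_i^{j_i}$ exactly as in the earlier proofs, while $\pi_K(p_i^{0})=1$; and by Theorem~\ref{powersof2}, since $\nu_2(K)\geq 2$, the term $\pi_K(2^{j_1})=2^{\,j_1+1-\min(\nu_2(K),\,j_1)}$ is a pure power of $2$, equal to $2$ precisely when $j_1\in\{1,2\}$, while $\pi_K(2^{0})=1$ and $\nu_2(\pi_K(2^{j_1}))\leq j_1-1$ for $j_1\geq 3$.

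Then I would eliminate the $q_\lambda$'s by the usual argument, taking $q_\lambda$ to be the largest of $q_1,\dots,q_s$: Theorem~\ref{corollary3.1} (applied to odd primes not dividing $K^2+4$) together with Theorem~\ref{W-S-S} shows that the exact power of $q_\lambda$ dividing any $\pi_K(q_\mu^{\theta_\mu})$ is at most $\theta_\lambda-1$, while $q_\lambda$ divides neither $\pi_K(2^{j_1})$ nor any $4p_i^{j_i}$; hence $q_\lambda^{\theta_\lambda-1}\mid\mid\pi_K(m)=m$, contradicting $q_\lambda^{\theta_\lambda}\mid m$ unless $s=0$. (This step does not need to know whether $q_\lambda$ is itself a $K$-Wall-Sun-Sun prime, since $\nu_{q_\lambda}(\pi_K(q_\lambda^{\theta_\lambda}))<\theta_\lambda$ in any case.) With $s=0$, I compare $2$-adic valuations in $m=\lcm[\pi_K(2^{j_1}),\{4p_i^{j_i}:j_i\geq 1\}]$: the left side has valuation $j_1$, while the right side has valuation $\max(\nu_2(\pi_K(2^{j_1})),2)$ if some $j_i\geq 1$ and $\nu_2(\pi_K(2^{j_1}))$ otherwise. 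A short case check on $j_1$ then leaves only $j_1=1$ with all $j_i=0$, giving $m=\pi_K(2)=2$ by Theorem~\ref{Keven2}, or $j_1=2$ with some $j_i\geq 1$, giving $m=\lcm[2,\{4p_i^{j_i}\}]=4p_2^{j_2}\cdots p_r^{j_r}$; every other possibility ($j_1=0$, $j_1=1$ with some $j_i\geq 1$, $j_1=2$ with all $j_i=0$, or $j_1\geq 3$) produces a $2$-adic valuation mismatch (the all-zero, $j_1=0$ branch being the excluded value $m=1$).

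The main obstacle is the $2$-adic bookkeeping: one must keep straight that each $4p_i^{j_i}$ contributes exactly $2^2$, that $\pi_K(2^{j_1})$ is a pure power of $2$ whose exponent equals $j_1$ only for $j_1=1$, and that the degenerate value $m=4$ (which is \emph{not} a fixed point, since $\pi_K(2^2)=2$ when $4\mid K$) is correctly excluded. The $q_\lambda$-elimination is routine once one passes to the largest such prime, which avoids the subtlety that a prime factor of $\pi_K(q_\mu)$ may be as large as $q_\mu-1$ and could otherwise coincide with a smaller $q_\lambda$.
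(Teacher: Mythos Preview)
Your proposal is correct and follows essentially the same approach as the paper's proof: expand $\pi_K(m)$ via Theorem~\ref{lcm}, use Theorems~\ref{W-S-S}, \ref{quad_res}, and~\ref{ord=4} to get $\pi_K(p_i^{j_i})=4p_i^{j_i}$, eliminate the extraneous primes $q_\lambda$ by a valuation drop, and finish with a $2$-adic comparison. Your version is in fact tidier in two respects---you fold $3$ into the $q_\lambda$'s (legitimate since $3\nmid K^2+4$ when $4\mid K$) rather than carrying it separately, and your ``take the largest $q_\lambda$'' device cleanly handles the possibility that some $\pi_K(q_\mu)$ contributes a factor of a smaller $q_\lambda$, a point the paper leaves implicit; your explicit exclusion of $m=4$ also sharpens the conclusion to match the main Theorem~\ref{K_Fix_Point}.
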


\begin{proof}
Suppose $K\equiv0\pmod{4}$ and $\pi_K(m)=m$. Let $m=2^{j_1}\cdot 3^{j_2}\cdot p_1^{j_1}\cdots p_r^{j_r}\cdot q_1^{\theta_1}\cdots q_s^{\theta_s}$ where $p_1^{j_1}\cdots p_r^{j_r}$ is the prime factorization of $K^2+4$. Note that if $\theta_\lambda\neq0$ then $q_\lambda \mid m$. By Theorem \ref{lcm}, 
    \begin{align}\label{K=0mod4_P1}
        m=\pi(m)=\lcm\left[\pi_K(2^{j_1})\, \pi_K(3^{j_2})\, \pi_K(p_1^{j_1}),\ldots, \pi_K(p_r^{j_r}), \pi_K(q_1^{\theta_1}),\ldots, \pi_K(q_s^{\theta_s})\right]
    \end{align}
    By Theorem \ref{corollary3.1}, if $\rho \mid \pi_K(q_\lambda^{\theta_\lambda})$, then $\rho<q_\lambda$ for all $1 \varleq i \varleq s$. By Theorem \ref{W-S-S}, $\pi_K(q_\lambda^{\theta_\lambda})=q_\lambda^{\theta_\lambda-1}\pi_K(q_\lambda)$ and by Theorem \ref{quad_res}, $\pi_K(q_\lambda) \mid (q_\lambda-1)$ if $K^2+4$ is a quadratic residue, or $\pi_K(q_\lambda) \mid 2(q_\lambda+1)$ if $K^2+4$ is not a quadratic residue modulo $q_\lambda$. Either way, it follows that $q_\lambda^{\theta_\lambda-1}$ is the largest factor of $q_\lambda$ that divides $\pi_K(q_\lambda^{\theta_\lambda})$, that is, $q_\lambda^{\theta_\lambda-1}\mid\mid\pi_K(q_\lambda^{\theta_\lambda})$. 

    By Theorem \ref{H&J}, $2$ is a $K$-Wall-Sun-Sun prime where $\pi_K(2)=\pi_K(2^2)=2$ but neither $3$, $p_i$, nor $q_\lambda$ are $K$-Wall-Sun-Sun primes. By Theorem \ref{powersof2}, for $j_1 \vargeq 2, \pi_K(2^{j_1}) \mid 2^{j_1-1}$. Specifically, if $a=2^{\nu_2(\gcd(K,m))}-1 \vargeq 1$, then $\pi_K(2^{j_1})=2^{j_1-a}$. If $j_1=1, \pi_K(2)=2$, thus forming a fixed point. By Theorems \ref{W-S-S} and \ref{K3}, $\pi_K(3^{j_2})=3^{j_2-1}\pi_K(3)=3^{j_2-1}\cdot2$ if $3\mid K$ and $\pi_K(3^{j_2})=3^{j_2-1}\pi_K(3)=3^{j_2-1}\cdot8$ if $3\not\mid K$. By Theorem \ref{W-S-S}, $\pi_K(p_i^{j_i})=p_i^{j_i-1}\pi_K(p_i)$ for $i=1, 2, \ldots r$. By Theorem \ref{quad_res}, $\pi_K(p_i)=p_i\cdot\text{ord}_{p_i}(2^{-1}K)$ and by Theorem \ref{ord=4}, $\text{ord}_{p_i}(2^{-1}K)=4$ since $p_i$ is odd and divides $K^2+4$. It follows that 
    \[\pi_K(p_i^{j_i})=p_i^{j_i-a}\pi_K(p_i)=p_i^{j_i-1}\cdot p_i\cdot\text{ord}_{p_i}(2^{-1}K)=4p_i^{j_i}\]
    
    Note that $q_\lambda\not\mid\pi_K(2^{j_1-1})=2^{j_1}$ and $q_\lambda\not\mid\pi_K(3^{j_2})=3^{j_2-1}\cdot2\ \text{or}\ 8$, and $q_\lambda\not\mid\pi_K(p_i^{j_i})=4p_i^{j_i}$. Thus, $q_\lambda^{\theta_\lambda-1}\mid\mid m$. But this implies that $q_\lambda^{\theta_\lambda}\not\mid m$, which is a contradiction. Equation \eqref{K=0mod4_P1} is reduced to:
    \begin{align}\label{K=0mod4_P2}
        m=\pi_K(m)=\lcm\left[2^{j_1-a}, 3^{j_2-1}\cdot(2\ \text{or}\ 8), 4p_2^{j_1},\ldots,4p_r^{j_r}\right]
    \end{align}
    Recall that by hypothesis, if $4\mid m$, then $m=2^{j_1-a}\cdot  p_2^{j_2}\cdots p_r^{j_r}\cdot q_1^{\theta_1}\cdots q_s^{\theta_s}$. Suppose $j_1\vargeq3$, then $2^{j_1-1}$ is the largest power of $2$ in \eqref{K=3mod6_P2}, implying $2^{j_1-a}\mid\mid m$. Because $a \vargeq 1$, this contradicts the hypothesis that $2^{j_1}\mid m$. Thus, equation \eqref{K=0mod4_P2} is reduced to:
    \[
    m=\pi_k(m)=\lcm\left[2^{j_1-a} ,4p_2^{j_2},\ldots,4p_r^{j_r}\right]=4\cdot p_2^{j_2}\cdots p_r^{j_r} 
    \]
    where $p_1^{a_1}\cdots p_r^{a_r}$ is the prime factorization of $K^2+4$ and $p_1^{a_1}=2^2$.
\end{proof}

\section{Proof of the K-Iteration Theorem}

\begin{theorem}[Renault \cite{Renault}]\label{Iteration3}
    For all $m > 2$ and $K \vargeq 1$, $\pi_K(m)$ is even.
\end{theorem}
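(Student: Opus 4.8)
The plan is to realize $\pi_K(m)$ as the multiplicative order of the companion matrix $Q=\begin{pmatrix}K&1\\1&0\end{pmatrix}$ in $GL_2(\mathbb{Z}/m\mathbb{Z})$ and then extract the conclusion from $\det Q$. First I would record the standard identity $Q^n=\begin{pmatrix}F_{K,n+1}&F_{K,n}\\F_{K,n}&F_{K,n-1}\end{pmatrix}$, proved by induction on $n$ from the recurrence $F_{K,n+1}=KF_{K,n}+F_{K,n-1}$, with the base case $n=1$ immediate since $F_{K,2}=K$, $F_{K,1}=1$, $F_{K,0}=0$.

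Next I would translate periodicity into matrix language. If $\pi=\pi_K(m)$ is the length of a shortest period of $(F_{K,n}\bmod m)$, then $F_{K,\pi}\equiv F_{K,0}=0$ and $F_{K,\pi+1}\equiv F_{K,1}=1\pmod m$; running the recurrence backwards gives $F_{K,\pi-1}=F_{K,\pi+1}-KF_{K,\pi}\equiv 1\pmod m$. Hence $Q^{\pi}\equiv\begin{pmatrix}1&0\\0&1\end{pmatrix}\pmod m$. (Only this direction is needed; conversely $Q^{\pi}\equiv I$ would force the sequence to be $\pi$-periodic.)

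Finally I would take determinants: since $\det Q=-1$, we get $(-1)^{\pi}=\det(Q^{\pi})\equiv\det(I)=1\pmod m$. For $m>2$ one has $-1\not\equiv 1\pmod m$ because $m\nmid 2$, so $\pi$ must be even. Alternatively one can bypass matrices by first establishing the generalized Cassini identity $F_{K,n+1}F_{K,n-1}-F_{K,n}^2=(-1)^n$ (again an easy induction, or $\det Q^n$) and evaluating it at $n=\pi$, where the left side collapses to $1\cdot1-0^2\equiv 1\pmod m$, forcing $(-1)^{\pi}\equiv 1$ as before.

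There is no real obstacle here: the only steps needing care are the back-substitution that upgrades ``$F_{K,\pi}\equiv0,\ F_{K,\pi+1}\equiv1$'' to ``$Q^{\pi}\equiv I$'' by pinning down $F_{K,\pi-1}\equiv1\pmod m$, and the elementary remark that $-1\equiv1\pmod m$ fails exactly when $m\le 2$, which is precisely the range excluded by hypothesis.
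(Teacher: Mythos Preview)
Your argument is correct. The paper does not supply its own proof of this statement: it is quoted as a result of Renault and used as a black box in Section~4. Your companion-matrix/Cassini approach is exactly the standard one---reduce to $Q^{\pi}\equiv I\pmod m$, take determinants to get $(-1)^{\pi}\equiv 1\pmod m$, and observe this forces $\pi$ even once $m>2$---and there is nothing to correct.
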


\begin{theorem}\label{2-not-WSS}
    If $K$ is odd, then $2$ is not a $K$-Wall-Sun-Sun prime.
\end{theorem}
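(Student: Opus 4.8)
The plan is to derive this immediately from the explicit formula for $\pi_K(2^e)$ when $K$ is odd. By definition, a prime $p$ is a $K$-Wall-Sun-Sun prime precisely when $\pi_K(p)=\pi_K(p^2)$, so it suffices to exhibit a single power of $2$ at which the Pisano period strictly grows; the natural choice is to compare $\pi_K(2)$ with $\pi_K(2^2)$.

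First I would invoke Theorem \ref{powersof2}: since $K$ is odd, $\pi_K(2^e)=3\cdot 2^{e-1}$ for every $e\vargeq 1$. In particular $\pi_K(2)=3$ (which also matches Theorem \ref{Kodd2}) and $\pi_K(2^2)=6$. Because $3\neq 6$, the prime $2$ fails the defining equality and is therefore not a $K$-Wall-Sun-Sun prime. Equivalently, phrased through Theorem \ref{W-S-S}: the expression $3\cdot 2^{e-1}$ is strictly increasing in $e$, so the maximal $e$ with $\pi_K(2^e)=\pi_K(2)$ equals $1$, and this is exactly the assertion that $2$ is not a $K$-Wall-Sun-Sun prime.

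It is worth noting why this requires its own statement rather than following from Theorem \ref{H\&J}: when $K$ is odd, $K^2+4$ is odd, so $2\nmid(K^2+4)$ and Theorem \ref{H\&J}, which classifies only the $K$-Wall-Sun-Sun prime divisors of $K^2+4$, is silent about $p=2$ in this regime. The needed information is instead the $2$-adic computation packaged in Theorem \ref{powersof2}. Consequently there is essentially no obstacle here: once Theorem \ref{powersof2} is available the proof is a one-line comparison, and the only point demanding any care is to apply the definition of $K$-Wall-Sun-Sun prime with $p=2$ and read off $\pi_K(2)=3\neq 6=\pi_K(4)$.
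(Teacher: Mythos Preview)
Your proof is correct. The paper's own proof reaches the same conclusion $\pi_K(2)=3\neq 6=\pi_K(4)$ but obtains $\pi_K(4)=6$ by a direct case analysis of the $K$-Fibonacci sequence modulo $4$ for $K\equiv 1\pmod 4$ and $K\equiv 3\pmod 4$, rather than by invoking Theorem~\ref{powersof2}; your route is slightly more economical since that formula is already available.
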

\begin{proof}
    Note that by Theorem \ref{Kodd2} that $\pi_K(2)=3$ for odd $K$. It remains to show that $\pi_K(4)\neq3$ for odd $K$. If $K \equiv 1 \pmod 4$, then the $K$-Fibonacci sequence (modulo 4) becomes 0, 1, 1, 2, 3, 1, 0, 1, $\circlearrowleft$, with period 6. If $K \equiv 3 \pmod 4$, then the $K$-Fibonacci sequence (modulo 4) becomes 0, 1, 3, 2, 1, 1, 0, 1, $\circlearrowleft$, again with period 6. Hence, $\pi_K(2)\neq\pi_K(4)$ for all odd $K$.
\end{proof}
\begin{theorem}\label{Iteration1}
   If $p\mid(K^2+4)$, then for all $N$, $p\mid\pi_K^N(p^j)$. 
\end{theorem}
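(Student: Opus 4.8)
The plan is to induct on $N$, with the entire argument resting on one elementary claim: $p \mid \pi_K(p^a)$ for every $a \ge 1$ (we take $j \ge 1$ throughout, so that the base case $N=0$, namely $p \mid \pi_K^0(p^j) = p^j$, is trivial). Granting the claim, the inductive step follows immediately from the $\lcm$-multiplicativity of Theorem \ref{lcm}: write $m = \pi_K^N(p^j)$ with $p \mid m$, and let $p^a$ be the exact power of $p$ dividing $m$ (so $a \ge 1$). Since $m > 1$ at every stage (because $p \mid m$), Theorem \ref{lcm} applies to the prime factorization of $m$, and $\pi_K(p^a)$ is one of the terms in the $\lcm$ computing $\pi_K(m)$; hence $\pi_K(p^a) \mid \pi_K(m)$, and by the claim $p \mid \pi_K(p^a)$, so $p \mid \pi_K(m) = \pi_K^{N+1}(p^j)$.

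Everything thus reduces to the claim, where the parity of $K$ forces a case split. If $p$ is odd, then $p \mid K^2+4$ implies $p$ is not a $K$-Wall-Sun-Sun prime by Theorem \ref{H&J}, so by Theorem \ref{W-S-S} we have $\pi_K(p^a) = p^{a-1}\pi_K(p)$ for all $a \ge 1$; since Theorem \ref{quad_res} gives $\pi_K(p) = p\cdot\ord_p(2^{-1}K)$, which is divisible by $p$, the claim follows. If $p = 2$, then $2 \mid K^2+4$ forces $K$ even, and Theorem \ref{powersof2} gives $\pi_K(2^a) = 2^{\,a+1-\nu_2(\gcd(K,2^a))}$; because $\nu_2(\gcd(K,2^a)) \le a$, the exponent is at least $1$, so $2 \mid \pi_K(2^a)$.

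The one place requiring care — and the reason $p = 2$ is split off — is that, unlike the odd divisors of $K^2+4$, the prime $2$ can be a $K$-Wall-Sun-Sun prime (precisely when $K \equiv 0 \pmod 4$), so the formula $\pi_K(p^a) = p^{a-1}\pi_K(p)$ is unavailable and one must instead invoke the explicit evaluation of $\pi_K(2^a)$ from Theorem \ref{powersof2}. Apart from that bookkeeping, no exact values of Pisano periods are needed: only that $p$ divides $\pi_K(p^a)$ and that $\pi_K(p^a)$ appears as a factor inside the $\lcm$ of Theorem \ref{lcm}, which then propagates divisibility by $p$ through every iteration.
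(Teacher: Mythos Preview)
Your proof is correct and follows essentially the same strategy as the paper's: an induction on $N$ powered by the single claim $p \mid \pi_K(p^a)$, which is verified via the explicit formulas for odd $p$ (Theorems \ref{W-S-S}, \ref{quad_res}, \ref{H&J}) and for $p=2$ (Theorem \ref{powersof2}), with Theorem \ref{lcm} propagating the divisibility through each iterate. Your write-up is in fact cleaner than the paper's---you handle the $p=2$ case uniformly via Theorem \ref{powersof2} rather than splitting into $K\equiv 0$ and $K\equiv 2\pmod 4$, and you make the inductive structure explicit rather than leaving it implicit.
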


\begin{proof}
    Suppose $p\mid(K^2+4)$. By Theorem \ref{H&J}, $p$ is a $K$-Wall-Sun-Sun prime if and only if $p=2$ and $K\equiv0\pmod{4}$. In this case, by Theorem \ref{Keven2} $\pi_K(2)=\pi_K(4)=2$. Hence, $2\mid\pi_K^N(p)$ for all $N$. If $p=2$ and $K\equiv 2 \pmod 4$, then $\pi_K^N(p^j) = 2^j$ for all $j$, because this is precisely one family of $K$-fixed points by Theorem \ref{fixed_point_2mod4}. On the other hand, suppose that $p$ is an odd prime. By Theorems \ref{W-S-S} and \ref{quad_res}, $\pi_K(p^{j})=p^{j-1}\pi_K(p_i)=4p$. Recursively applying the Pisano period yields $\pi_K^2(p)=\pi_K\left(4p\right)=\lcm\left[\pi_K(2^2),\pi_K(p)\right]=\lcm\left[\pi_K(2^2),4p\right]$. Iterating the Pisano period always preserves a factor of $p$ by induction. Thus, for all $N$, $p\mid\pi_K^N(p)$.
\end{proof}

\begin{theorem}\label{Iteration2}
    If $q\not\mid(K^2+4)$, then there exists a positive integer $N$ such that for all $n\vargeq N$,  $q\not\mid\pi_K^n(q^\theta)$ for any $\theta\vargeq1$.
\end{theorem}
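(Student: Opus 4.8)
The plan is to track the $q$-adic valuation along the trajectory. Write $\alpha_n := \nu_q\!\left(\pi_K^n(q^\theta)\right)$, so that $\alpha_0 = \theta$, and the goal is to show $\alpha_n = 0$ for all sufficiently large $n$ (the threshold $N$ will depend on $\theta$, since the valuation can only be guaranteed to drop by a bounded amount per step). The engine throughout is the multiplicativity of Theorem \ref{lcm}: if $\pi_K^n(q^\theta) = \prod_i p_i^{a_i}$ is the prime factorization, then $\pi_K^{n+1}(q^\theta) = \lcm_i \pi_K(p_i^{a_i})$, so $\alpha_{n+1} = \max_i \nu_q\!\left(\pi_K(p_i^{a_i})\right)$. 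By Theorem \ref{W-S-S} each $\pi_K(p^a)$ equals $p^{\max(a - e_p,\,0)}\pi_K(p)$, so its prime factors are $p$ together with the prime factors of $\pi_K(p)$.

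The first step is an auxiliary claim, which I would prove by induction on $n$: every prime factor of $\pi_K^n(q^\theta)$ is at most $q$. The base case is immediate. For the inductive step, each prime-power divisor $p^a$ of $\pi_K^n(q^\theta)$ has $p \leq q$, and the prime factors of $\pi_K(p^a)$ are $p$ and the prime factors of $\pi_K(p)$; for $p$ odd the latter are all $\leq p \leq q$ by Theorem \ref{corollary3.1}, while for $p = 2$ they lie in $\{2,3\}$ by Theorem \ref{powersof2}, and $\{2,3\} \subseteq \{r : r \leq q\}$ because $q$, being odd, is at least $3$. Hence every prime dividing $\pi_K^{n+1}(q^\theta)$ is $\leq q$.

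The second step shows that while the factor $q$ is present its exponent strictly drops, and once it disappears it cannot return. The $q$-part $q^{\alpha_n}$ of $\pi_K^n(q^\theta)$ contributes $\nu_q\!\left(\pi_K(q^{\alpha_n})\right) = \max(\alpha_n - e_q,\,0)$, which is $< \alpha_n$ whenever $\alpha_n \geq 1$, since $e_q \geq 1$ and $q \nmid \pi_K(q)$ (Theorem \ref{quad_res}, using $q \nmid K^2+4$). Any prime-power divisor $p^a$ with $p \neq q$ contributes $\nu_q(\pi_K(p^a)) = \nu_q(\pi_K(p))$, and this vanishes: if $p$ is odd then, by Theorem \ref{corollary3.1}, every prime factor of $\pi_K(p)$ is $\leq p$ with equality only if $p \mid K^2+4$, so from $p \leq q$ and $q \nmid K^2+4$ we get $p < q$ and thus $q \nmid \pi_K(p)$; if $p = 2$ then the prime factors of $\pi_K(2^a)$ lie in $\{2,3\}$, so $q \mid \pi_K(2^a)$ is impossible once $q \geq 5$. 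Therefore, for $q \geq 5$ we have $\alpha_{n+1} = \max(\alpha_n - e_q,\,0)$, so $\alpha_n$ drops by at least $1$ at each step until it reaches $0$ and stays there, and $N = \theta$ suffices.

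The main obstacle is precisely the possibility that $q$ re-enters the trajectory through a prime already present, which is what the prime bound of the first step combined with Theorem \ref{corollary3.1} is designed to preclude, together with the genuinely exceptional small primes $q = 2, 3$. For $q = 3$ (which never divides $K^2+4$) the argument above goes through verbatim when $K$ is even, since then $\pi_K(2^a)$ is a power of $2$; when $K$ is odd, the relation $\pi_K(2^a) = 3 \cdot 2^{a-1}$ can reintroduce a factor of $3$. For $q = 2$, the hypothesis $2 \nmid K^2+4$ forces $K$ odd, and then $\pi_K(2) = 3$ with $\pi_K(3) \in \{2, 8\}$, so a factor of $2$ persists. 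These residual cases are exactly the ``core'' primes $2$ and $3$ that appear in the fixed points of Theorem \ref{K_Fix_Point} and in the $2$-periodic point $\pi_K(2) = 3,\ \pi_K(3) = 2$ of Theorem \ref{K-Iteration}, and are handled by the separate treatment of the power-of-$2$ and power-of-$3$ components in the proof of that theorem.
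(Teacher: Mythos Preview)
Your approach mirrors the paper's --- both track the $q$-adic valuation along the trajectory via Theorem~\ref{lcm} and Theorem~\ref{W-S-S} --- but your version is more careful. The paper's proof simply asserts that ``the highest power of $q$ that carries to the next iteration is strictly decreasing'' without ever checking that $q$ cannot re-enter through the factor $\pi_K^2(q)$, $\pi_K^3(q)$, etc. Your auxiliary claim (every prime factor of $\pi_K^n(q^\theta)$ is at most $q$) together with Theorem~\ref{corollary3.1} is precisely what is needed to rule out such re-entry, and the paper omits this step.

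You also correctly observe that the statement, read literally, is false for $q\in\{2,3\}$ when $K$ is odd: for instance, with $K\equiv\pm1\pmod 6$ the trajectory of $2$ reaches the fixed point $24$, so $2\mid\pi_K^n(2)$ for all large $n$; and with $K\equiv 3\pmod 6$ the $2$-periodic point $2\leftrightarrow 3$ keeps $3$ recurring in $\pi_K^n(3)$. The paper's proof does not detect this. In the paper's actual applications (the iteration theorems for each residue class of $K$), however, the lemma is only invoked for the primes $q_\lambda$ lying outside $\{2,3\}\cup\{p_i:p_i\mid K^2+4\}$, and the $2$- and $3$-parts are handled directly --- exactly the separation you point to in your final paragraph. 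So your argument for $q\ge 5$ (and for $q=3$ with $K$ even) establishes everything the paper needs, and your diagnosis of the small-prime exceptions is accurate.
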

\begin{proof}
    Suppose $q\not\mid(K^2+4)$. By Theorem \ref{W-S-S}, there is a maximal $e$ such that $\pi_K(q^e)=\pi_K(q)$ and for $x\vargeq e \vargeq1$, $\pi_K(q^x)=q^{x-e}\pi_K(q)$. Applying the Pisano period again yields
    
    \begin{align*}
        \pi_K^2(q^{\theta})&=\pi_K\left(q^{x-e}\pi_K(q)\right)=\lcm\left[\pi_K(q^{x-e}),\pi_K^2(q)\right]=\lcm\left[q^{x-e-1}\pi_k(q),\pi_K^2(q)\right].
    \end{align*}
    By Theorem \ref{quad_res}, since $q_i\not\mid(K^2+4)$, it follows that $q^\theta\not\mid\pi_K(q)$ for any positive integer $\theta$. As the Pisano period is recursively applied, the highest power of $q$ that carries to the next iteration is strictly decreasing. Applying the Pisano period precisely $x-e$ times ensures that the highest power of $q$ that emerges is $q^{x-e-(x-e)}$. Hence, there exists a positive integer $N$ such that for all $n\vargeq N$,  $q\not\mid\pi_K^n(q^\theta)$.
\end{proof}

\begin{theorem}\label{iteration-1mod6}
    If $K\equiv\pm1\pmod{6}$ and $m>1$, then there exists an $N$ such that for all $n\vargeq N$, $\pi_K^n(m)=24\cdot p_1^{j_1}\cdots p_r^{j_r}$ for $j_i=0,1,2,\ldots$, where $p_1^{a_1}\cdots p_r^{a_r}$ is the prime factorization of $K^2+4$. 
\end{theorem}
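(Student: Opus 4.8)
The plan is to prove that the trajectory of $m$ eventually falls inside the set $S=\{24\cdot p_1^{j_1}\cdots p_r^{j_r}:j_i\geq 0\}$. Since the ``if''-direction theorems proved earlier in this section show every element of $S$ is a $K$-fixed point, once some iterate lies in $S$ every later iterate equals it, which is exactly the claim. Two facts are used throughout: first, $K\equiv\pm1\pmod 6$ forces $K$ odd and $3\nmid K$, so $K^2+4$ is odd, $K^2+4\equiv 2\pmod 3$, and every prime $p_i\mid K^2+4$ is at least $5$; second, writing $\pi_K^n(m)=2^{a_n}3^{b_n}\prod_i p_i^{c_{i,n}}\prod_\lambda q_\lambda^{\theta_{\lambda,n}}$, where the $q_\lambda$ are the primes \emph{not} dividing $K^2+4$ (necessarily $\geq 5$), Theorem \ref{lcm} computes each exponent of $\pi_K^{n+1}(m)$ as the maximum of the corresponding exponents of $\pi_K(2^{a_n})$, $\pi_K(3^{b_n})$, $\pi_K(p_i^{c_{i,n}})$, $\pi_K(q_\lambda^{\theta_{\lambda,n}})$, and these are known explicitly: $\pi_K(2^a)=3\cdot2^{a-1}$ (Theorem \ref{powersof2}), $\pi_K(3^b)=8\cdot3^{b-1}$ (Theorems \ref{W-S-S}, \ref{K3}), $\pi_K(p_i^{c})=4p_i^{c}$ (Theorems \ref{W-S-S}, \ref{quad_res}, \ref{ord=4}), while every prime factor of $\pi_K(q_\lambda)$ is strictly smaller than $q_\lambda$ (Theorem \ref{corollary3.1}).

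\emph{Step 1: the extraneous primes die.} I claim there is an $N_1$ with $\theta_{\lambda,n}=0$ for all $\lambda$ and all $n\geq N_1$. From the explicit periods above, a prime $\rho$ dividing $\pi_K^{n+1}(m)$ must be $2$, $3$, a prime dividing $K^2+4$, or a prime no larger than some prime dividing $\pi_K^n(m)$; hence the set of primes occurring anywhere along the trajectory is finite. Now descend from the top of this finite list of extraneous primes: if $Q$ is the largest extraneous prime that ever occurs, then in $\pi_K^{n+1}(m)=\lcm[\pi_K(2^{a_n}),\pi_K(3^{b_n}),\pi_K(p_i^{c_{i,n}}),\pi_K(q_\lambda^{\theta_{\lambda,n}})]$ the only term that can carry a factor of $Q$ is $\pi_K(Q^{\theta_{Q,n}})=Q^{\theta_{Q,n}-1}\pi_K(Q)$ --- every other term has all prime factors in $\{2,3\}\cup\{p_1,\dots,p_r\}$ or strictly smaller than $Q$ (Theorem \ref{corollary3.1}) --- and $Q\nmid\pi_K(Q)$; hence $\theta_{Q,n+1}=\max(\theta_{Q,n}-1,0)$, so $Q$ disappears after finitely many steps and, being maximal among extraneous primes, can never reappear. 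Repeating with the next largest extraneous prime, and so on through the finite list, produces $N_1$. This is essentially the argument of Theorem \ref{Iteration2} carried out for a general starting modulus, and it is the step I expect to be the main obstacle, because of the bookkeeping of which prime can produce which under an lcm of Pisano periods and the need to see that the descent is well-founded.

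\emph{Step 2: the $p_i$-parts freeze and the $2$- and $3$-parts stabilize.} For $n\geq N_1$ we have $\pi_K^n(m)=2^{a_n}3^{b_n}\prod_i p_i^{c_{i,n}}$. Since $\pi_K(p_i^{c})=4p_i^{c}$ while none of $\pi_K(2^{a})$, $\pi_K(3^{b})$, $\pi_K(p_j^{c})$ $(j\ne i)$ is divisible by $p_i$, each exponent $c_{i,n}$ is constant, say $j_i$ (in agreement with Theorem \ref{Iteration1}). It remains to analyse the two coordinates $(a_n,b_n)$, a finite deterministic dynamics exactly as in Fulton--Morris \cite{[Fulton_Morris]}. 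One first checks $\pi_K^n(m)>2$ for all large $n$: it is never $1$ because the Pisano period of a modulus $>1$ exceeds $1$, and it is never $2$ because $\pi_K(x)=2$ forces $x\mid K$, which is impossible for $x>1$ having all prime factors in $\{2,3\}\cup\{p_1,\dots,p_r\}$ since $K$ is odd, $3\nmid K$, and $p_i\nmid K$. Then Theorem \ref{Iteration3} gives $a_n\geq1$ eventually, after which $b_{n+1}=\max(1,b_n-1)$, so $b_n$ decreases to $1$ and stays there; once $b_n=1$ the term $\pi_K(3)=8=2^3$ forces $a_{n+1}=\max(a_n-1,3)$, so $a_n$ decreases to $3$ and stays there. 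Choosing $N$ beyond all these stabilizations gives $\pi_K^n(m)=2^3\cdot3\cdot\prod_i p_i^{j_i}=24\cdot p_1^{j_1}\cdots p_r^{j_r}$ for all $n\geq N$, as required.
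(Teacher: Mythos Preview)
Your proof is correct and follows essentially the same strategy as the paper: eliminate the extraneous primes $q_\lambda$ (the paper invokes its Theorem~\ref{Iteration2}, while you give a cleaner descent from the largest extraneous prime that properly handles the $\lcm$), preserve the $p_i$ via $\pi_K(p_i^{c})=4p_i^{c}$ (the paper's Theorem~\ref{Iteration1}), and force the $2$- and $3$-parts to stabilize at $2^3\cdot3$. Your explicit analysis of the $(a_n,b_n)$ dynamics is in fact more careful than the paper's chain $2\mid\pi_K(m)\Rightarrow 3\mid\pi_K^2(m)\Rightarrow 8\mid\pi_K^3(m)\Rightarrow 12\mid\pi_K^4(m)\Rightarrow 24\mid\pi_K^5(m)$, which establishes only divisibility by $24$ and leaves the exact stabilization of the exponents implicit.
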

\begin{proof}
    Suppose $K\equiv \pm1 \pmod6$ and $m=2^{s_1}3^{s_2}p_1^{j_1}\cdots p_r^{j_r} q_1^{\lambda_t}\cdots q_r^{\lambda_t}$ for $s_i,j_i\vargeq0$ and $\lambda_i\vargeq1$, where $p_1^{a_1}\cdots p_r^{a_r}$ is the prime factorization of $K^2+4$. By Theorem \ref{lcm}, 
    \begin{align}\label{K=1mod6_P1_2}
    \pi_K(m)=\lcm\left[\pi_K(2^{s_1}), \pi_K(3^{s_2}), \pi_K(p_1^{j_1}),\ldots, \pi_K(p_r^{j_r}), \pi_K(q_1^{\lambda_1}),\ldots, \pi_K(q_s^{\lambda_2})\right]
    \end{align}
    By Theorem \ref{Iteration3}, $2\mid\pi_K(m)$. By Theorem \ref{Kodd2}, $\pi_K(2)=3$ implying that $3\mid\pi_K^2(m)$. By Theorem \ref{K3}, $\pi_K(3)=8$, implying that $2^3\mid\pi_K^3(m)$. By Theorems \ref{W-S-S} and \ref{Keven2}, $\pi_K(2^3)=2^{3-1}\pi_K(2)=4\cdot3=12$, implying $12\mid\pi_K^4(m)$. By Theorems \ref{lcm}, \ref{Keven2}, and \ref{K3}, $\pi_K(12)=\lcm\left[\pi_K(2^2),\pi_K(3)\right]=\lcm\left[6,8\right]=24$, implying $24\mid\pi_K^5(m)$. By Theorem \ref{Iteration1}, if any $j_i\vargeq1$, then the corresponding $p_i$ is preserved for all $N$, and $p_i\mid\pi_K(m)$. By Theorem \ref{Iteration2}, for any $q_i\not\mid(K^2+4)$, $q_i$ vanishes for some sufficiently large $N$, and $q_i\not\mid\pi_K(m)$. Hence, for some $N$, $\pi_K^n(m)=24\cdot p_1^{j_1}\cdots p_r^{j_r}$ where $p_1^{a_1}\cdots p_r^{a_r}$ is the prime factorization of $K^2+4$. 
\end{proof}

\begin{theorem}
    If $K\equiv3\pmod{6}$ and $m>3$, then there exists an $N$ such that for all $n\vargeq N$, $\pi_K^n(m)=p_1^{j_1}\cdots p_r^{j_r}$ for $j_i=0,1,2,\ldots$, where $p_1^{a_1}\cdots p_r^{a_r}$ is the prime factorization of $K^2+4$. And if $m=2$ or $m=3$, then $\pi_K(2)=3$ and $\pi_K(3)=2$.
\end{theorem}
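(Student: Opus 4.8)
The plan is to follow the scheme of Theorem~\ref{iteration-1mod6}, adapted to the arithmetic of the case $K\equiv3\pmod 6$ (so $K$ is odd and $3\mid K$). One decomposes each iterate by Theorem~\ref{lcm}, tracks the $2$- and $3$-parts separately from the odd prime divisors of $K^2+4$ and from the primes coprime to $K^2+4$, and then invokes the $K$-Fixed Point Theorem (Theorem~\ref{K_Fix_Point}, case (iii)). Accordingly the eventual value should read $6$ or $12\cdot p_1^{j_1}\cdots p_r^{j_r}$, the fixed points of case (iii).

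First I would record the stated exception: by Theorems~\ref{Kodd2} and \ref{K3}, $\pi_K(2)=3$ and $\pi_K(3)=2$, so $\{2,3\}$ is a genuine $2$-periodic orbit. More is true and will be needed: whenever $1<x\mid K$, the sequence modulo $x$ is $0,1,0,1,\dots$, so $\pi_K(x)=2$ and the trajectory of $x$ enters this $2$-cycle; hence the hypothesis ``$m>3$'' must be read with the tacit assumption $m\nmid K$. Granting that, one first checks that the trajectory of such an $m$ stays clear of $\{2,3\}$: since $m>2$, Theorem~\ref{Iteration3} makes $\pi_K(m)$ even, and $\pi_K(m)\ne2$ since $m\nmid K$; inductively $\pi_K^n(m)$ is even and $\ge4$ for all $n\ge1$ (an even number cannot divide the odd $K$), so it is never $2$ or $3$.

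For $m>3$ with $m\nmid K$, write $m=2^{s_1}3^{s_2}\,p_1^{j_1}\cdots p_r^{j_r}\,q_1^{\lambda_1}\cdots q_t^{\lambda_t}$, where $p_1^{a_1}\cdots p_r^{a_r}$ is the factorization of $K^2+4$ (every $p_i$ odd, and in fact $p_i\ge5$ since $3\mid K$ forces $3\nmid K^2+4$) and each $q_l\nmid K^2+4$; by Theorem~\ref{lcm} a decomposition of this shape persists at every step. Now I would assemble: (1) by Theorem~\ref{Iteration3} every iterate is even, so $2\mid\pi_K^n(m)$ for $n\ge1$, whence $\pi_K(2)=3$ forces $3\mid\pi_K^n(m)$ for $n\ge2$ and $6\mid\pi_K^n(m)$ thereafter; (2) by Theorem~\ref{Iteration1} any $p_i\mid K^2+4$ occurring in an iterate occurs in all later ones, and since $\pi_K(p_i^{j})=4p_i^{j}$ by Theorems~\ref{W-S-S}, \ref{quad_res}, and \ref{ord=4}, its exponent is eventually constant and its $\pi_K$-image carries a factor $2^2$; (3) by Theorem~\ref{Iteration2}, combined with the descent in Theorem~\ref{corollary3.1} (a prime dividing $\pi_K(p)$ is $\le p$, with equality only if $p\mid K^2+4$), every $q_l\nmid K^2+4$ and every prime it could introduce eventually disappears, so past some index the only primes present are $2$, $3$, and divisors of $K^2+4$; (4) because $K$ is odd, Theorem~\ref{powersof2} gives $\pi_K(2^e)=3\cdot2^{e-1}$, so the $2$-adic valuation carried by the pure $2$-power block drops by one each step, while the only blocks feeding it back are $\pi_K(3^{s_2})=3^{s_2-1}\cdot2$ (contributing $2^1$) and the $\pi_K(p_i^{j})=4p_i^{j}$ (contributing $2^2$). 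Hence $\nu_2(\pi_K^n(m))$ stabilizes — at $1$ if no $p_i$ survives, at $2$ if some $p_i$ survives — and combining all four points, past some $N$ one has $\pi_K^n(m)=6$ (no surviving divisor of $K^2+4$) or $\pi_K^n(m)=12\cdot p_1^{j_1}\cdots p_r^{j_r}$ with eventually-constant exponents; either way a fixed point by Theorem~\ref{K_Fix_Point}.

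The step I expect to be the main obstacle is the rigorous control of $\nu_2(\pi_K^n(m))$, i.e.\ showing it cannot ``park'' at a large value. The identity $\nu_2(\pi_K(2^e))=e-1$ kills persistence of a pure power of two, but one still has to rule out the valuation being pumped up indefinitely by the $q_l$-blocks, for which $\pi_K(q_l)$ may divide $q_l-1$ or $2(q_l+1)$ and so carry an arbitrarily large power of $2$; this forces one to extract from Theorem~\ref{Iteration2} and the finiteness of the descent in Theorem~\ref{corollary3.1} a single uniform $N$ after which the $q$-blocks have vanished \emph{and} the $2$-part has settled (the same care is needed for the $p_i$-exponents, which a $q_l$-block could momentarily inflate). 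A secondary subtlety, which distinguishes this case from $K\equiv\pm1\pmod6$, is that the $3$-block supplies only a single factor of $2$ here (because $\pi_K(3)=2$, not $8$), which is precisely why the base fixed point is $6$ rather than $24$ and why the extra factor $2^2$ hinges on some odd divisor of $K^2+4$ actually surviving along the trajectory.
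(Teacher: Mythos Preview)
Your approach is essentially the same as the paper's: decompose via Theorem~\ref{lcm}, use Theorem~\ref{Iteration3} plus $\pi_K(2)=3$ to force $6\mid\pi_K^2(m)$, invoke Theorems~\ref{Iteration1} and~\ref{Iteration2} to fix the $p_i$-part and eliminate the $q_l$-part, and land on $6$ or $12\cdot p_1^{j_1}\cdots p_r^{j_r}$. You are in fact more careful than the paper on two points --- the paper handles the case $m\mid K$ (with $m>3$) by noting the orbit then falls into the $2$-cycle rather than by excluding it from the hypothesis as you do, and the paper jumps directly to $\pi_K^n(m)=\lcm[6,4p_1^{j_1},\ldots,4p_r^{j_r}]$ without your explicit $\nu_2$-tracking --- but the skeleton is identical.
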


\begin{proof}
    Suppose $K\equiv 3 \pmod6$ and $m=p_1^{j_1}\cdots p_r^{j_r} q_1^{\lambda_t}\cdots q_r^{\lambda_t}>3$ for $j_i\vargeq0$ and $\lambda_i\vargeq1$, where $p_1^{a_1}\cdots p_r^{a_r}$ is the prime factorization of $K^2+4$. By Theorem \ref{lcm}, 
    \begin{align}\label{K=1mod6_P1_3}
    \pi_K(m)=\lcm\left[\pi_K(p_1^{j_1}),\ldots, \pi_K(p_r^{j_r}), \pi_K(q_1^{\lambda_1}),\ldots, \pi_K(q_s^{\lambda_2})\right]
    \end{align}
    By Theorem \ref{Iteration3}, $2\mid\pi_K(m)$. If $\pi_K(m)=2$, then the $K$-Fibonacci sequence must reduce to $0, 1, K\equiv0\pmod m, \circlearrowleft$, so $m \mid K$. In this case, the iterative Pisano period cycles at the 2-periodic point $\pi(2)=3, \pi(3)=2, \circlearrowleft$. Otherwise, either $\pi_K(m)= 2^{\alpha} \cdot \beta$, where $\alpha \vargeq 1$ and $\beta$ is an odd number greater than 1, or $\pi_K(m)=2^\gamma$, where $\gamma \vargeq 2$. By Theorem \ref{Kodd2}, $\pi_K(2)=3$. If $\pi_K(m)=2^\alpha \cdot \beta$, then $\pi_K(m)=\lcm\left[\pi_K(2^\alpha),\pi_K(\beta)\right]$. Since $3\mid\pi_K(2^\alpha)$ and $2\mid\pi_K(\beta)$, $6\mid\pi_K^2(m)$. Else, if $\pi_K(m)=2^\gamma$, and since 2 is not a $K$-Wall-Sun-Sun prime by Theorem \ref{2-not-WSS}, then $\pi_K^2(m)=3\cdot 2^{\gamma-1}$, so $6 \mid \pi_K^2(m)$. By Theorem \ref{Iteration1}, if any $j_i\vargeq1$, then the corresponding $p_i$ is preserved for all $N$, and $p_i\mid\pi_K(m)$. By Theorem \ref{quad_res}, $\pi_K(p_i)=p_i\cdot\text{ord}_{p_i}(2^{-1}K)$ and by Theorem \ref{ord=4}, $\text{ord}_{p_i}(2^{-1}K)=4$ since $p_i$ is odd and divides $K^2+4$. By Theorem \ref{Iteration2}, for any $q_i\not\mid(K^2+4)$, $q_i$ vanishes for some sufficiently large $N$, and $q_i\not\mid\pi_K(m)$. Hence, for some $N$, $\pi_K^n(m)=\lcm\left[6,4p_1^{j_1},\ldots,4p_r^{j_r}\right]$. If there is at least one such $p_i$, then $\pi_K^n(m)=12\cdot p_1^{j_1}\cdots p_r^{j_r}$ where $p_1^{a_1}\cdots p_r^{a_r}$ is the prime factorization of $K^2+4$; otherwise, $\pi_K^n(m)=6$. Finally, since $K\equiv3\pmod{6}$, there is a 2-periodic point: $\pi_K(2)=3\rightarrow\pi_K(3)=2\circlearrowleft$.
\end{proof}

\begin{theorem}
    If $K\equiv2\ \text{or}\ 0\pmod{4}$ and $m>1$, then there exists an $N$ such that for all $n\vargeq N$, $\pi_K^n(m)=p_1^{j_1}\cdots p_r^{j_r}$ for $j_i=0,1,2,\ldots$, where $p_1^{a_1}\cdots p_r^{a_r}$ is the prime factorization of $K^2+4$.
\end{theorem}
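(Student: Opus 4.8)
The plan is to follow the template of the two preceding iteration theorems, splitting on the value of $\nu_2(K)$. In either case $K$ is even, so by Theorem \ref{Keven2} $\pi_K(2)=2$; since $K^2+4\equiv2\pmod3$ we have $3\nmid(K^2+4)$; and for every odd prime $p_i\mid(K^2+4)$, Theorems \ref{H&J}, \ref{W-S-S}, and \ref{ord=4} give $\pi_K(p_i^{j_i})=p_i^{j_i-1}\pi_K(p_i)=4p_i^{j_i}$. Write $m=2^{s_1}3^{s_2}\prod_i p_i^{j_i}\prod_\ell q_\ell^{\lambda_\ell}$, where the $q_\ell$ are the odd prime divisors of $m$ that do not divide $K^2+4$, and apply Theorem \ref{lcm}. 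The three kinds of prime factors can then be tracked separately: by Theorem \ref{Iteration1} every odd $p_i$ with $j_i\vargeq1$ divides every further iterate; by Theorem \ref{Iteration2}, together with a descent on the largest prime factor exactly as in its proof (using Theorem \ref{corollary3.1}), each $q_\ell$ — and likewise the prime $3$, since $3\nmid(K^2+4)$ — is eventually absent from all iterates; and once an iterate has its prime support contained in $\{2\}\cup\{p_i\}$ it stays there, because $\pi_K(2^e)$ is a power of $2$, $\pi_K(3^e)$ has support in $\{2,3\}$, and $\pi_K(p_i^{j_i})=4p_i^{j_i}$ has support in $\{2,p_i\}$. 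Hence after finitely many steps $\pi_K^n(m)=2^{c}\prod_i p_i^{j_i}$ for some exponent $c$, and here the exponents $j_i$ are already permanent, since the $i$-th term of the $\lcm$ contributes exactly $p_i^{j_i}$ and no other term contributes any power of $p_i$.

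It remains to check that the exponent of $2$ settles. If $K\equiv2\pmod4$, then $\nu_2(K)=1$, so Theorem \ref{powersof2} gives $\pi_K(2^e)=2^e$ for all $e\vargeq1$; hence $\pi_K\!\left(2^c\prod_i p_i^{j_i}\right)=\lcm[2^c,4p_1^{j_1},\ldots,4p_r^{j_r}]=2^{\max(c,2)}\prod_i p_i^{j_i}$ (simply $2^c$ when there are no $p_i$), which the next application of $\pi_K$ leaves unchanged. If $K\equiv0\pmod4$, then $\nu_2(K)\vargeq2$ and Theorem \ref{powersof2} gives $\pi_K(2^e)=2^{e+1-\min(\nu_2(K),e)}$, which equals $2$ for $1\varleq e\varleq\nu_2(K)$ and equals $2^{e+1-\nu_2(K)}<2^e$ for $e>\nu_2(K)$; so the exponent of $2$ strictly decreases at every step until the $2$-part is at most $2$, after which the $\lcm$ with the terms $4p_i^{j_i}=2^2p_i^{j_i}$ holds the $2$-part at exactly $2^2$ (or, if there are no $p_i$, it descends to $\pi_K(2)=2$). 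Either way the $2$-part becomes constant, so $\pi_K^n(m)$ is eventually constant with all of its prime factors dividing $K^2+4$; a computation identical to those of the previous section (equivalently, Theorem \ref{K_Fix_Point}(ii),(iv)) confirms that this limiting value is a $K$-fixed point.

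The step I expect to be the main obstacle is the bookkeeping inside the $\lcm$: one must be sure that no stray odd prime is ever created (handled by the closure observation above together with the largest-prime-factor descent) and, in the case $K\equiv0\pmod4$, that the $2$-part does not overshoot. The only thing that can temporarily inflate the $2$-part is the factor $2^3$ produced by $\pi_K(3)=8$ while $3$ still divides an iterate (which happens exactly when $3\nmid K$); but this is harmless, because once $3$ has disappeared the exponent of $2$ resumes its monotone descent and is then floored by the $4p_i^{j_i}$ terms, so nothing new occurs.
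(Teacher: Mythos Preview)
Your argument is correct and follows the same skeleton as the paper's proof: decompose $m$ via Theorem~\ref{lcm}, invoke Theorem~\ref{Iteration1} to keep the $p_i\mid(K^2+4)$ and Theorem~\ref{Iteration2} (with the largest-prime descent you spell out) to kill the remaining primes. In fact you prove more than the paper does here: the paper's proof of this theorem stops once the prime support is contained in $\{p_1,\ldots,p_r\}$, whereas you go on to track the $2$-part explicitly in each of the cases $\nu_2(K)=1$ and $\nu_2(K)\ge 2$ and show it stabilizes, which is really part of the $K$-Iteration Theorem proper rather than this lemma.
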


\begin{proof}
    Suppose $K\equiv 2\ \text{or}\ 0 \pmod4$ and suppose $m=p_1^{j_1}\cdots p_r^{j_r} q_1^{\lambda_t}\cdots q_t^{\lambda_t}>1$ for $j_i\vargeq0$ and $\lambda_i\vargeq1$ (and $m>3$ if $K\equiv3\pmod{6}$), where $p_1^{a_1}\cdots p_r^{a_r}$ is the prime factorization of $K^2+4$. By Theorem \ref{lcm}, 
    \begin{align}\label{K=2,0mod4_P1}
    \pi_K(m)=\lcm\left[\pi_K(p_1^{j_1}),\ldots, \pi_K(p_r^{j_r}), \pi_K(q_1^{\lambda_1}),\ldots, \pi_K(q_s^{\lambda_2})\right]
    \end{align}
    By Theorem \ref{Iteration1}, if any $j_i\vargeq1$, then the corresponding $p_i$ is preserved for all $N$, and $p_i\mid\pi_K^N(m)$. By Theorem \ref{Iteration2}, for any $q_i\not\mid(K^2+4)$, $q_i$ vanishes for some sufficiently large $N$, and $q_i\not\mid\pi_K^N(m)$. Hence, there is a positive integer $N$ such that for all $n\vargeq N$, $\pi_K^n(m)=p_1^{j_1}\cdots p_r^{j_r}$ where $p_1^{a_1}\cdots p_r^{a_r}$ is the prime factorization of $K^2+4$. 
\end{proof}

\begin{section}{Proof of Theorem \ref{trajectory}}

Let $q$ be an odd prime factor of $m$ that is coprime to all fixed points, and define $S(m)$ by
\[
S(m) = \sum_{q\mid m} = \nu_q(m) (\log(q)-\log(3)) = \log m - \sum_{q \mid m} \nu_q(m)\log(3).
\] where $\nu_q(m)$ is the $q$-adic valuation of $m$. By Theorem \ref{quad_res}, $\pi_K(q) \varleq 2(q+1) \varleq \frac{8}{3}q$. Since this number is even if $q > 2$, $\pi_K(q)$ must have at least two prime factors. For arbitrarily large $q$, 
\[
S(\pi_K(q)) \varleq \log(q)-2\log(3) = S(q)+\log(8/3)-\log(3) = S(q)+3\log(2)-2\log(3).
\]
Since $\pi_K(q^e) \mid q^{e-1}\pi_K(q)$, $S(\pi_K(q^e)) < S(q^e)+3\log(2)-2\log(3)$. Hence $S=0$ is reached in at most $\frac{\log m}{2\log(3)-3\log(2)}$ steps, where one step represents the calculation of the Pisano period for a certain $q_i^{e_i}$. While one iteration of the Pisano period always accomplishes at least one step, as any $q_i^{e_i}$ term may be chosen and separated out by Theorem \ref{lcm}, it may accomplish more than one step if multiple $q_i$'s are present.

Any number $m$ where $S(m)=0$ must be of the form $2^a3^b\prod_{i=1}^t p_i^{j_i}$, where $p_i \mid K^2+4$ for all $i$ as seen in the previous section. Note that by Theorem \ref{quad_res}, all factors of $p_i^{j_i}$ remain constant upon iteration of the period, as $\pi_K(p_i^{j_i})=4p_i^{j_i}$, and the only factors that vary are powers of $2$ and $3$. If $m'$ is not a multiple of a fixed point, then the Pisano period may be applied recursively to achieve a fixed point. The maximum number of such iterations is 5, when $K \equiv \pm1 \pmod 6$.

Now suppose $m'$ is a multiple of a fixed point but not a fixed point itself. The only primes $p$ whose $p$-adic valuations may change on the iteration of the Pisano period when $S=0$ are 2 and 3; hence let $g(m')=\nu_2(m')+\nu_3(m')$. Suppose $K \equiv \pm1 \pmod 6$. Then, $\nu_2(\pi_K(2^a))=a-1$, $\nu_2(\pi_K(3^b))=3$, and $\nu_2(\pi_K(p_i^{e_i}))=2$; likewise, $\nu_3(\pi_K(2^a))=1$, $\nu_3(\pi_K(3^b))=b-1$, and $\nu_3(\pi_K(p_i^{e_i}))=0$. Since $m$ is a multiple of a fixed point, $\nu_2(m') \vargeq 3$ and $\nu_3(m') \vargeq 1$, and because $m'$ is not a fixed point itself there must be at least one additional factor of either 2 or 3. If $\nu_2(m') > 3$, then $\nu_2(\pi_K(m')) = \nu_2(m') - 1$, and if $\nu_3(m') > 1$, then $\nu_3(\pi_K(m')) = \nu_3(m') - 1$. Hence $g(m')$ decreases by at least 1 until a fixed point is reached, and may not be greater than $\log_2m' = \frac{\log m}{\log(2)} + \frac{\log m}{2\log(3)-3\log(2)} + 5$. The maximum number of iterations of the Pisano period necessary to reach a fixed point is thus $\frac{\log m}{\log(2)} + \frac{\log m}{2\log(3)-3\log(2)} + 1$.

If $K \equiv 3 \pmod 6$, then the trajectory of $m'$ will end at either the 2-periodic point $(2,3)$ or a fixed point that is a multiple of 12. If $p_i \mid m$ for at least one $p_i \not \in \{2,3\}$, then $12 \mid \pi_K^N(m')$ for $N \vargeq 2$ since $\pi_K(p_i) = 4p_i$ and $3 \mid \pi_K(4)$. As above, $\nu_2(\pi_K(2^a))=a-1$, $\nu_2(\pi_K(p_i^{e_i}))=2$, $\nu_3(\pi_K(3^b)) = b - 1$, and $\nu_3(\pi_K(p_i^{e_i}))=0$, but $\nu_2(\pi_K(3^b))=1$. If the trajectory of $m'$ ends at a multiple of 12 and $g(m) > 3$, then $g(m)$ will again decrease by at least 1 until a fixed point is reached, and the maximum number of iterations is $\frac{\log m}{\log(2)} + \frac{\log m}{2\log(3)-3\log(2)} + 2$. Two extra steps may be necessary if the trajectory terminates at the 2-periodic point.

If $K \equiv 2 \pmod 4$, then any number $m'$ with $S=0$ will reach a fixed point after at most one more iteration, since any combination of powers of $2$ and $p_i$'s is a fixed point.

If $K \equiv 0 \pmod 4$, then $\pi_K(2^a) \mid 2^{a-1}$ for all $a$; hence, the powers of $2$ vanish in at most $\frac{\log m}{\log 2}$ steps, giving $\frac{\log m}{\log(2)} + \frac{\log m}{2\log(3)-3\log(2)}$ total iterations.

Combining these yields an overall upper bound of $\frac{\log m}{\log(2)}+\frac{\log m}{2\log(3) - 3\log(2)} + 2$ on the number of iterations of the Pisano period required to reach a fixed point; hence 
\[
\limsup \frac{\mathcal{T}_K(m)}{\log m} \varleq \frac{1}{\log 2}+\frac{1}{2\log(3)-3\log(2)} \approx 9.933.
\]

\begin{corollary}\label{fixedpointbound}
    Let $\mathcal{P}_K(m)$ be the fixed point that terminates the trajectory of $m$; if $K \equiv 3 \pmod 6$ and $m$ terminates at the 2-periodic point, define $\mathcal{P}_K(m)=0$. Then $\limsup \frac{\log \mathcal{P}_K(m)}{\log m} < \infty$.
\end{corollary}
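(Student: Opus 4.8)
The plan is to write $\mathcal P_K(m)$ as its $3$-smooth part times its part supported on the prime divisors of $K^2+4$, and bound the logarithm of each by $O(\log m)$. For a positive integer $n$, let $B(n)=\prod_{q\ge5}q^{\nu_q(n)}$ be the largest divisor of $n$ coprime to $6$, so that $n=2^{\nu_2(n)}3^{\nu_3(n)}B(n)$ and $B(n)=1$ exactly when $n$ is $3$-smooth. Since $-4$ is a quadratic non-residue mod $3$, no $K^2+4$ is divisible by $3$; hence by Theorem \ref{K_Fix_Point} every genuine fixed point has the form $c\cdot\prod_{p\mid K^2+4,\ p\ge5}p^{j_p}$, where $c\mid24$ when $K\equiv\pm1\pmod6$, $K\equiv3\pmod6$, or $K\equiv0\pmod4$, and $c=2^{j_1}$ or $2^{j_1+1}$ when $K\equiv2\pmod4$. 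Therefore $B(\mathcal P_K(m))=\prod p^{j_p}$ and $\log\mathcal P_K(m)=\log c+\log B(\mathcal P_K(m))$, so it suffices to bound $\log c$ and $\log B(\mathcal P_K(m))$ by $O(\log m)$. (When $K\equiv3\pmod6$ and the trajectory of $m$ ends at the $2$-periodic point we have set $\mathcal P_K(m)=0$, and those $m$ are irrelevant to the $\limsup$.)

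The heart of the proof is the claim that $B(\pi_K(n))\le B(n)$ for every $n$, i.e. $B$ never increases along the Pisano trajectory; given this, $B(\mathcal P_K(m))\le B(m)\le m$ and so $\log B(\mathcal P_K(m))\le\log m$. I would prove the claim by factoring $n$ and applying Theorem \ref{lcm}: writing the product over prime powers $r^{a}$ exactly dividing $n$, we have $B(\pi_K(n))=\lcm_{r}B(\pi_K(r^{a}))\le\prod_{r}B(\pi_K(r^{a}))$, so it is enough to check $B(\pi_K(r^{a}))\le B(r^{a})$ for each prime power. If $r\in\{2,3\}$, then $\pi_K(r^{a})$ is $3$-smooth by Theorems \ref{powersof2} and \ref{K3}, so both sides equal $1$. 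If $r=p\mid K^2+4$ with $p\ge5$, then $p$ is not a $K$-Wall-Sun-Sun prime (Theorem \ref{H&J}), so $\pi_K(p^{a})=4p^{a}$ by Theorems \ref{W-S-S}, \ref{quad_res}, and \ref{ord=4}, and both sides equal $p^{a}$. If $r=q\ge5$ with $q\nmid K^2+4$, then $\pi_K(q^{a})=q^{a-f}\pi_K(q)$ for some $f\ge1$ by Theorem \ref{W-S-S} (with $q^{a-f}:=1$ when $a\le f$), while $\pi_K(q)$ is even (Theorem \ref{Iteration3}) and divides $q-1$ or $2(q+1)$ (Theorem \ref{quad_res}); the largest odd divisor of such a number is at most $(q+1)/2<q$, so $B(\pi_K(q))<q$, and since $q\nmid\pi_K(q)$ we get $B(\pi_K(q^{a}))=q^{a-f}B(\pi_K(q))<q^{a-f+1}\le q^{a}=B(q^{a})$. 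Multiplying these inequalities proves the claim.

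It remains to bound $\log c$. When $K\equiv\pm1\pmod6$, $K\equiv3\pmod6$, or $K\equiv0\pmod4$, we have $c\mid24$, hence $\log c\le\log24$. The one remaining case, $K\equiv2\pmod4$, has $c=2^{\nu_2(\mathcal P_K(m))}$, so I would bound $\nu_2$ along the trajectory $n_0=m$, $n_{j+1}=\pi_K(n_j)$. Let $Q$ be the largest prime $\ge5$ dividing $m$ that does not divide $K^2+4$ (and $Q=1$ if there is none). First, an induction on $j$ shows that every prime $\ge5$ dividing $n_j$ and not dividing $K^2+4$ is at most $Q$: any such prime $q'$ divides $\pi_K(q^{a})$ for some prime power $q^{a}$ exactly dividing $n_j$ with $q\le Q$ (the prime powers over $2,3$ and over the divisors of $K^2+4$ contribute no such primes), and then $q'=q$ or $q'$ divides $\pi_K(q)$ and hence divides $q-1$ or $2(q+1)$, which forces $q'\le(q+1)/2<q\le Q$. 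Second, since $\nu_2(K)=1$, Theorem \ref{powersof2} gives $\pi_K(2^{e})=2^{e}$ for $e\ge1$; this, together with $\nu_2(\pi_K(3^{b}))\le3$, $\nu_2(\pi_K(p^{a}))=2$ for odd $p\mid K^2+4$, and $\nu_2(\pi_K(q^{a}))=\nu_2(\pi_K(q))\le1+\nu_2(q+1)\le1+\log_2(Q+1)$ for bad primes $q$, gives upon taking the maximum over the prime powers dividing $n_j$ that $\nu_2(n_{j+1})\le\max\{\nu_2(n_j),\,3,\,1+\log_2(Q+1)\}$. Iterating from $\nu_2(n_0)=\nu_2(m)\le\log_2 m$ and using $Q\le m$ yields $\nu_2(n_j)\le2+\log_2 m$ for all $j$, so $\log c\le(2+\log_2 m)\log2$. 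Combining everything, $\log\mathcal P_K(m)\le\log m+(2+\log_2 m)\log2+O(1)=2\log m+O(1)$, and hence $\limsup\frac{\log\mathcal P_K(m)}{\log m}\le2<\infty$.

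The step I expect to be the main obstacle is the monotonicity $B(\pi_K(n))\le B(n)$. Its content is that although one application of the Pisano period to $n$ can create many new prime factors — the prime divisors of $q\pm1$ for each ``bad'' prime $q$ of $n$ — their product is already dominated by $q$ itself, so the largest divisor coprime to $6$ can never grow. Once that is established, the only genuinely new technicality is the $2$-adic valuation in the case $K\equiv2\pmod4$, and it is handled by the observation that the bad primes appearing anywhere along the trajectory of $m$ are all bounded by a fixed prime factor of $m$.
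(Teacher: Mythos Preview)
Your proof is correct and takes a genuinely different route from the paper's. The paper argues step-by-step: each ``step'' (replacing one bad prime power $q^{e}$ by $\pi_K(q^{e})$) multiplies by at most $8/3$, and the proof of Theorem~\ref{trajectory} bounds the number of such steps by $\frac{\log m}{2\log 3-3\log 2}$; after $S=0$ a further factor of at most $24$ can appear, yielding $\limsup\frac{\log\mathcal P_K(m)}{\log m}\le\frac{\log(8/3)}{2\log 3-3\log 2}\approx 8.327$. You instead identify a Lyapunov-type quantity $B(n)$, the part of $n$ coprime to $6$, and prove it is non-increasing under $\pi_K$ by a clean prime-power check. This immediately controls the ``interesting'' part of the fixed point and reduces the problem to a bound on the $3$-smooth part, which is trivial except in the case $K\equiv 2\pmod 4$; there your induction bounding the bad primes along the trajectory by $Q\le m$ handles the unbounded power of $2$. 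The payoff is a sharper constant, $\limsup\le 2$, and an argument that does not depend on first counting iterations.

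One small slip to fix: the intermediate inequality $\nu_2(\pi_K(q))\le 1+\nu_2(q+1)$ is only valid in the non-residue branch of Theorem~\ref{quad_res}. In the residue branch $\pi_K(q)\mid q-1$, so one should write $\nu_2(\pi_K(q))\le\max\{\nu_2(q-1),\,1+\nu_2(q+1)\}$. Your final bound $\nu_2(\pi_K(q))\le 1+\log_2(Q+1)$ is unaffected, since $\nu_2(q-1)\le\log_2(q-1)<1+\log_2(Q+1)$ as well.
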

\begin{proof}
    The first part of the proof of Theorem \ref{trajectory} shows that there can be at most $\frac{\log m}{2\log(3)-3\log(2)}$ steps in the reduction to $S=0$, and by Theorem \ref{quad_res}, 
    \[
    \frac{\pi_K(q_i^{e_i})}{q_i^{e_i}} \varleq \frac{2q+1}{q} \varleq \frac{8}{3},
    \]
    since the minimum possible $q_i$ is 3. If $m'=\pi_K^n(m)$ is such that $S(m')=0$, and $m'$ is a multiple of a fixed point, each successive application of the Pisano period reduces $m'$. If $m'$ is not a multiple of a fixed point, it is multiplied by no more than 24 until a fixed point is reached. Therefore, 
    \[
    \log \mathcal{P}_K(m) \varleq (\log(8)-\log(3))\left(\frac{\log m}{2\log(3) - 3\log(2)}\right)+\log(24)
    \]
    and
    \[\limsup \frac{\log \mathcal{P}_K(m)}{\log m} \varleq \frac{\log(8)-\log(3)}{2\log(3)-3\log(2)} \approx 8.327.
    \]
\end{proof}

\end{section}

\section{Final Thoughts}
A general binary recurrence sequence $\mathcal{U}_{n}$ is determined by the four parameters $(a,b,c,d)$ where 
\[
\mathcal{U}_0=c, \qquad \mathcal{U}_1=d, \qquad \mathcal{U}_n=a\times\mathcal{U}_{n-1}+b\times\mathcal{U}_{n-2}.
\]
For example, the Fibonacci sequence is determined by $(1,1,0,1)$, the Lucas sequence by $(1,1,2,1)$, the Pell sequence by $(2,1,0,1)$, the Jacobsthal sequence by $(1,2,0,1)$, etc. When the initial values are $0$ and $1$, the recurrence is called an $(a,b)$-Fibonacci sequence. In all of these cases, the binary recurrence sequence is periodic modulo a positive integer $m>1$ \cite{[Everest]}. Naturally, the Pisano period has been considered for many binary recurrence sequences. Is it possible to determine the fixed points for general binary recurrence sequences?
\begin{table}[H]
\begin{tabular}{l|l}
Sequence&Pisano Periods for $m=2,3,\ldots,24$\\
\hline
Fibonacci&3,8,6,20,24,16,12,24,60,10,24,28,48,40,24,36,24,18,60,16,30,48,\textbf{24},\ldots\\
Lucas&3,8,6,4,24,16,12,24,12,10,24,28,48,8,24,36,24,18,12,16,30,48,\textbf{24},\ldots\\
Pell&\textbf{2},8,\textbf{4},12,8,6,\textbf{8},24,12,24,8,28,6,24,16,\textbf{16},24,40,12,24,24,22,8\ldots\\
Jacobsthal&6,2,4,6,\textbf{6},2,18,4,10,6,12,6,12,2,8,18,\textbf{18},4,6,10,22,6,\ldots\\
\end{tabular}
\end{table}
It is well-established that the Pisano period of the Lucas sequence differs from the Pisano period of the Fibonacci sequence only when $m$ is a multiple of $5$ \cite{[A106291]}. And note that the Pell sequence is simply the $K$-Fibonacci sequence when $K=2$. 
\begin{corollary}
    In the Lucas sequence, $m>1$ is a fixed point if and only if $m=24$.
\end{corollary}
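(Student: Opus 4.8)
The plan is to deduce the corollary from the Fulton--Morris Fixed Point Theorem (Theorem~\ref{Fib_Fixed_Point}), using the relation between the Lucas and Fibonacci Pisano periods recalled just above the statement. Write $\pi_L(m)$ for the shortest period of the Lucas sequence modulo $m$, so the assertion is that for $m>1$ one has $\pi_L(m)=m$ exactly when $m=24$. I would rely on four facts: (a) $\pi_L(m)=\pi(m)$ whenever $5\not\mid m$ (the relation recorded in \cite{[A106291]}); (b) $\pi_L$, like $\pi$, equals the least common multiple of its values on the prime-power divisors of $m$ (Chinese Remainder Theorem, exactly as in Theorem~\ref{lcm}); (c) $\pi(5^a)=4\cdot 5^a$ for every $a\vargeq1$ (from $\pi(5)=20$ and the fact that $5$ is not a Wall--Sun--Sun prime for the Fibonacci sequence, so that $e=1$ in Theorem~\ref{W-S-S} with $K=1$); and (d) $\pi_L(5^a)=4\cdot 5^{a-1}$ for every $a\vargeq1$ (a direct computation gives $\pi_L(5)=4$ and $\pi_L(25)=20\neq\pi_L(5)$, and since the prime-power lifting behind Theorems~\ref{W-S-S}--\ref{e1<e2} depends only on the recurrence $x_n=x_{n-1}+x_{n-2}$ and not on initial values, it applies to the Lucas sequence, giving $\pi_L(5^a)=5^{a-1}\pi_L(5)$). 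Only fact~(d) goes beyond what is already stated in the excerpt.

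First suppose $5\not\mid m$. By~(a), $\pi_L(m)=m$ is equivalent to $\pi(m)=m$, which by Theorem~\ref{Fib_Fixed_Point} means $m=24\cdot 5^{k-1}$ for some $k\vargeq1$; since $5\not\mid m$ this forces $k=1$, so $m=24$. Conversely $\pi_L(24)=\pi(24)=24$ because $5\not\mid 24$ and $24$ is a Fibonacci fixed point, so $24$ is indeed a Lucas fixed point. Hence $24$ is the unique Lucas fixed point coprime to $5$, and it remains to rule out fixed points divisible by $5$.

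Now suppose $\pi_L(m)=m$ with $5\mid m$, and write $m=5^a n$ where $a=\nu_5(m)\vargeq1$ and $5\not\mid n$. By (b), (a) and (d),
\[
5^a n = \pi_L(m) = \lcm\left[\pi_L(5^a),\pi_L(n)\right] = \lcm\left[4\cdot 5^{a-1},\pi(n)\right].
\]
Comparing $5$-adic valuations gives $a=\max(a-1,\nu_5(\pi(n)))$, hence $\nu_5(\pi(n))=a$; in particular $5^a\mid\pi(n)$, so $5^{a-1}$ and $5^a$ are each redundant inside an lcm with $\pi(n)$. Using (c),
\[
\pi(m)=\lcm\left[\pi(5^a),\pi(n)\right]=\lcm\left[4\cdot 5^a,\pi(n)\right]=\lcm\left[4,\pi(n)\right]=\lcm\left[4\cdot 5^{a-1},\pi(n)\right]=\pi_L(m)=m,
\]
so $m$ is a Fibonacci fixed point. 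By Theorem~\ref{Fib_Fixed_Point}, $m=24\cdot 5^{k-1}$, and $5\mid m$ forces $k\vargeq2$, so the part of $m$ coprime to $5$ is $n=24$. But then $\nu_5(\pi(n))=\nu_5(\pi(24))=\nu_5(24)=0$, contradicting $\nu_5(\pi(n))=a\vargeq1$. Hence no multiple of $5$ is a Lucas fixed point, and with the previous case this proves the corollary.

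The hard part is the case $5\mid m$, and the one genuinely new input it needs is fact~(d), or equivalently just that $5^a\not\mid\pi_L(5^a)$: knowing merely that the Lucas and Fibonacci periods agree off multiples of $5$ tells us nothing about what happens \emph{at} powers of $5$, which is exactly where a hypothetical fixed point divisible by $5$ would have to be concealed. Once~(d) is available, the rest is valuation bookkeeping and a single appeal to Fulton--Morris. The step that most deserves care is the justification of~(d): one should check that Renault's prime-power results (Theorems~\ref{W-S-S}--\ref{e1<e2}), stated for $K$-Fibonacci sequences, carry over to the Lucas sequence, which they do because those statements concern the recurrence $x_n=x_{n-1}+x_{n-2}$ and are insensitive to the initial values $L_0=2$ and $L_1=1$.
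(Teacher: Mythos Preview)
The paper states this corollary without proof, relying only on the preceding sentence that the Lucas and Fibonacci Pisano periods agree whenever $5\nmid m$. Your argument for the case $5\nmid m$ is exactly the implicit one-line deduction the paper has in mind: transfer the fixed-point condition to the Fibonacci sequence and invoke Theorem~\ref{Fib_Fixed_Point}. Where you go further than the paper is in treating the case $5\mid m$, which the cited fact alone does not settle (it says the periods \emph{may} differ there, not how). Your reduction of that case to Fulton--Morris via the valuation bookkeeping is clean and correct, and you rightly isolate fact~(d), $\pi_L(5^a)=4\cdot5^{a-1}$, as the one genuinely new input. The only soft spot is your justification of~(d): Theorems~\ref{W-S-S}--\ref{e1<e2} are stated for sequences with initial data $(0,1)$, and while the prime-power lifting $\pi_L(p^{e+1})\in\{\pi_L(p^e),\,p\cdot\pi_L(p^e)\}$ does hold for arbitrary initial conditions (it is a standard fact for linear recurrences with invertible companion matrix), this is not literally what Renault proves, so a sentence pointing to the general lemma (or a short direct proof in $\mathbb{Z}[\phi]$ using the ramification of $5$) would tighten the argument. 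In fact your proof needs only the weaker statement $\nu_5(\pi_L(5^a))<a$, which already follows from $\pi_L(5^a)\mid\pi(5^a)=4\cdot5^a$ together with your verified base cases $\pi_L(5)=4$ and $\pi_L(25)=20$ and the lifting; everything downstream then goes through unchanged.
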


\begin{corollary}
    In the Pell sequence, $m>1$ is a fixed point if and only if $m=2^k$ for $k=1, 2, \ldots$
\end{corollary}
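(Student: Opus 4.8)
The plan is to specialize the $K$-Fixed Point Theorem (Theorem~\ref{K_Fix_Point}) to the parameter $K=2$, since the Pell sequence is precisely the $K$-Fibonacci sequence with $K=2$. As $2\equiv2\pmod4$, only case (ii) of that theorem is in play: a modulus $m>1$ satisfies $\pi_2(m)=m$ if and only if $m=2^{j_1}$ or $m=2^{j_1+1}p_2^{j_2}\cdots p_t^{j_t}$ for non-negative integers $j_i$, where $p_1^{e_1}p_2^{e_2}\cdots p_t^{e_t}$ is the prime factorization of $K^2+4$ and $p_1=2$.

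The single computational input needed is that $K^2+4=2^2+4=8=2^3$ when $K=2$. Thus the prime factorization of $K^2+4$ consists of the one prime $p_1=2$ with exponent $e_1=3$, and there are no odd primes $p_2,\dots,p_t$. Consequently the product $p_2^{j_2}\cdots p_t^{j_t}$ is empty, the second family $m=2^{j_1+1}p_2^{j_2}\cdots p_t^{j_t}$ collapses to $m=2^{j_1+1}$, and both families in case (ii) describe powers of $2$. Taking the union over $j_1\ge 0$ and imposing $m>1$ yields exactly $\{2^k:k\ge 1\}$, which is the claimed characterization.

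If one prefers to verify the ``if'' direction directly rather than citing Theorem~\ref{K_Fix_Point}, Theorem~\ref{powersof2} gives, for even $K$ and $e\ge 1$, that $\pi_K(2^e)=2^{e+1-\nu_2(\gcd(K,2^e))}$; for $K=2$ this is $2^{e+1-1}=2^e$, so every power of $2$ is a fixed point. There is essentially no obstacle here: the only point requiring a moment's care is the degenerate shape of the factorization of $K^2+4$ (it has no odd prime divisors), which is exactly what makes the general classification collapse to a pure power of $2$ in this case; one should also note that the two families in case (ii) overlap, so their union is cleanly $\{2^k : k\ge 1\}$.
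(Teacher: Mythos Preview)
Your proof is correct and matches the paper's approach: the paper presents this corollary immediately after remarking that the Pell sequence is the $K$-Fibonacci sequence with $K=2$, so the intended argument is precisely the specialization of Theorem~\ref{K_Fix_Point}(ii) using $K^2+4=8=2^3$ that you carry out. Your additional direct verification via Theorem~\ref{powersof2} is a nice sanity check but not needed.
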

\noindent The Jacobsthal sequence is particularly interesting as a different type of sequence where $b\neq1$. Note that the results of Renault used in the proofs of Theorem \ref{K_Fix_Point} used various aspects of $a^2+4b$ where $a=K$ and $b=1$. In the Jacobsthal sequence, note that $a^2+4b=9=3^2$.
\begin{conjecture}
    In the Jacobsthal sequence, $m>1$ is a fixed point if and only if $m=2\cdot3^k$ for $k=1, 2, \ldots$
\end{conjecture}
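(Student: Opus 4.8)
The plan is to mirror the prime-by-prime argument used for Theorem~\ref{K_Fix_Point}, now with the closed form $J_n=\frac{2^n-(-1)^n}{3}$ (the characteristic polynomial $x^2-x-2$ has roots $2$ and $-1$) and with the role of $K^2+4$ played by $a^2+4b=9$. Because $b=2$ is not invertible modulo powers of $2$, I would first fix conventions: the Jacobsthal sequence is only ultimately periodic modulo even $m$, so $\pi_J(m)$ denotes the length of its eventual cycle, and the Chinese Remainder Theorem still gives $\pi_J(m)=\lcm[\pi_J(p_1^{e_1}),\dots,\pi_J(p_r^{e_r})]$ for the factorization $m=p_1^{e_1}\cdots p_r^{e_r}$.

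Next I would record the local periods. Modulo $2^a$, the identity $3J_n\equiv 2^n-(-1)^n$ gives $J_n\equiv -3^{-1}(-1)^n\pmod{2^a}$ for all $n\ge a$, so the sequence enters a $2$-cycle when $a\ge2$ and is eventually constant when $a=1$; hence $\pi_J(2)=1$ and $\pi_J(2^a)=2$ for $a\ge2$. Modulo $3^b$, clearing the denominator shows that $J_{n+T}\equiv J_n\pmod{3^b}$ for all $n$ holds exactly when $2^T\equiv1\pmod{3^{b+1}}$ and $T$ is even; since $2$ is a primitive root modulo every power of $3$, $\ord_{3^{b+1}}(2)=2\cdot3^b$ is already even, so $\pi_J(3^b)=2\cdot3^b$ (in particular $3$ is not a Jacobsthal analogue of a Wall--Sun--Sun prime). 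Finally, for a prime $p\ge5$ we have $p\nmid b$, so the companion matrix is invertible modulo $p^e$ and the usual lifting argument (as in Theorem~\ref{W-S-S}) gives $\pi_J(p^e)\mid p^{e-1}\pi_J(p)$; since $x^2-x-2$ splits into distinct linear factors over $\mathbb{F}_p$, $\pi_J(p)$ divides $\lcm(\ord_p(2),2)$, which in turn divides $p-1$, so $p\nmid\pi_J(p)$ and hence $\nu_p(\pi_J(p^e))\le e-1$.

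With these facts the conjecture follows by the bookkeeping in the proof of Theorem~\ref{K_Fix_Point}. If $m$ had a prime factor $p\ge5$, choose the largest such $p$ and write $p^e\mid\mid m$; every other local period $\pi_J(q^c)$ is coprime to $p$ (for $q\in\{2,3\}$ it lies in $\{1,2,2\cdot3^c\}$, and for a prime $5\le q<p$ it divides $q^{c-1}(q-1)$, whose prime factors are all $<p$), so $\nu_p(\pi_J(m))=\nu_p(\pi_J(p^e))\le e-1<e=\nu_p(m)$ and $m$ is not a fixed point. Hence every fixed point has the form $m=2^a3^b$. Now $\pi_J(2^a3^b)=\lcm[\pi_J(2^a),\pi_J(3^b)]=2\cdot3^b$ whenever $b\ge1$, for every $a\ge0$, while $\pi_J(2^a)\le2$ when $b=0$; comparing with $m=2^a3^b$ forces $a=1$ and $b\ge1$. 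Conversely $\pi_J(2\cdot3^k)=\lcm[1,2\cdot3^k]=2\cdot3^k$ for every $k\ge1$, so the fixed points are precisely $\{2\cdot3^k:k\ge1\}$.

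The main obstacle is the degeneracy at $p=2$: since $b=2$, one cannot cite Theorems~\ref{lcm}--\ref{W-S-S} verbatim and must instead justify ``eventual period'' as the correct notion and verify multiplicativity through the Chinese Remainder Theorem directly. A secondary point is confirming that $2$ is a primitive root modulo every power of $3$ (by checking orders modulo $3$ and $9$ and applying the standard primitive-root lifting lemma), which is what pins down $\pi_J(3^b)=2\cdot3^b$ exactly rather than only as a divisor.
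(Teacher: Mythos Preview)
The paper does not prove this statement: it is presented as a \emph{conjecture} in the ``Final Thoughts'' section, with no argument offered beyond the tabulated small values. So there is no proof in the paper to compare against, and your proposal actually goes beyond the paper.

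Your argument is correct. The closed form $J_n=(2^n-(-1)^n)/3$, the direct computation of eventual periods modulo powers of $2$, the primitive-root fact for $2$ modulo powers of $3$ giving $\pi_J(3^b)=2\cdot 3^b$, and the splitting $x^2-x-2=(x-2)(x+1)$ over $\mathbb{F}_p$ for $p\ge 5$ (forcing $\pi_J(p)\mid p-1$, hence $\nu_p(\pi_J(p^e))\le e-1$) combine exactly as you describe. The ``largest prime $\ge 5$'' step is the same mechanism the paper uses in the proof of Theorem~\ref{K_Fix_Point}, and your final comparison $\pi_J(2^a3^b)=2\cdot 3^b$ versus $m=2^a3^b$ cleanly forces $a=1$, $b\ge 1$.

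The only genuine subtlety is the one you already flag: because $b=2$ is not a unit modulo even $m$, the Jacobsthal sequence is only \emph{eventually} periodic there, so $\pi_J$ must be taken as the eventual period and the $\lcm$ formula must be rejustified via CRT (which you do). Note that the paper itself is ambiguous on this point --- its table appears to list a value for $\pi_J(2)$ that matches no standard interpretation --- so if you write this up you should state your convention explicitly and remark that the conjecture is being read accordingly.
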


Renault's Theorem \ref{quad_res} is originally stated for $(a,b)$-Fibonacci sequences, which is less controllable when $b\neq1$. Let $\pi_{(a,b)}(m)$ denote the $(a,b)$-Pisano period of the $(a,b)$-Fibonacci sequence modulo $m$.  
\begin{theorem}[Renault \cite{Renault}]
Let $p$ be an odd prime such that $p\not\mid b$. Then,
\begin{itemize}
    \item if $a^2+4b$ is a nonzero quadratic residue modulo $p$, then $\pi_{(a,b)}(p)\mid(p-1)$\\
    \item if $a^2+4b$ is a quadratic nonresidue, then $\pi_{(a,b)}(p) \mid (p+1)\text{ord}_p(-b)$ except when $b=-1$, in which case $\pi_{(a,b)}(p)\not\mid(p+1)$.\\
    \item if $p\mid(a^2+4b)$, then $\pi_{(a,b)}(p)=p\cdot\text{ord}_p(2^{-1}\cdot a)$.
\end{itemize}
\end{theorem}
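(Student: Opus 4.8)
The plan is to realize the $(a,b)$-Fibonacci sequence through its companion matrix $M=\left(\begin{smallmatrix} a & b\\ 1 & 0\end{smallmatrix}\right)$ over $\mathbb{F}_p$ and to identify the period with a matrix order. Because $p\nmid b$ we have $\det M=-b\neq 0$ in $\mathbb{F}_p$, so $M\in GL_2(\mathbb{F}_p)$ has finite order and the sequence of vectors $\binom{U_{n+1}}{U_n}=M^{n}\binom{1}{0}$ is purely periodic. The vectors $\binom{1}{0}$ and $M\binom{1}{0}=\binom{a}{1}$ form a basis, and if $M^{k}\binom{1}{0}=\binom{1}{0}$ then also $M^{k}\binom{a}{1}=M\,M^{k}\binom{1}{0}=\binom{a}{1}$, so $M^{k}$ fixes a basis and equals $I$; conversely $M^{k}=I$ makes $k$ a period. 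Hence $\pi_{(a,b)}(p)=\operatorname{ord}_{GL_2(\mathbb{F}_p)}(M)$. The characteristic polynomial of $M$ is $f(x)=x^{2}-ax-b$, with discriminant $a^{2}+4b$ and product of roots $-b$, so the three bullets are exactly the cases where $f$ modulo $p$ has a repeated root ($p\mid a^2+4b$), splits into distinct linear factors ($a^2+4b$ a nonzero quadratic residue), or is irreducible ($a^2+4b$ a quadratic non-residue). The proof is then to compute $\operatorname{ord}(M)$ in each case by putting $M$ in normal form over the appropriate field.

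In the split case, diagonalize $M$ over $\mathbb{F}_p$ with eigenvalues $\alpha,\beta\in\mathbb{F}_p^{*}$ (nonzero since $\alpha\beta=-b\neq0$); then $\operatorname{ord}(M)=\lcm(\operatorname{ord}_p(\alpha),\operatorname{ord}_p(\beta))\mid p-1$. In the irreducible case, $f$ has Frobenius-conjugate roots $\alpha,\alpha^{p}\in\mathbb{F}_{p^{2}}^{*}$ with $\alpha\cdot\alpha^{p}=\alpha^{p+1}=-b$; diagonalizing over $\mathbb{F}_{p^{2}}$ gives $\operatorname{ord}(M)=\lcm(\operatorname{ord}(\alpha),\operatorname{ord}(\alpha^{p}))=\operatorname{ord}(\alpha)$ (the two orders agree because $\operatorname{ord}(\alpha)\mid p^{2}-1$ is coprime to $p$), and from $\alpha^{(p+1)\operatorname{ord}_p(-b)}=(-b)^{\operatorname{ord}_p(-b)}=1$ one gets $\operatorname{ord}(\alpha)\mid(p+1)\operatorname{ord}_p(-b)$, i.e.\ $\pi_{(a,b)}(p)\mid(p+1)\operatorname{ord}_p(-b)$. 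In the repeated-root case the double root is $\alpha=2^{-1}a$, which is nonzero (else $p\mid a$, forcing $p\mid 4b$ and contradicting $p\nmid b$); $M$ is never scalar, since its $(2,1)$ entry is $1$, so $M$ is conjugate to the Jordan block $J=\left(\begin{smallmatrix}\alpha&1\\0&\alpha\end{smallmatrix}\right)=\alpha I+N$ with $N^{2}=0$. Then $J^{n}=\alpha^{n}I+n\alpha^{n-1}N$, so $J^{n}=I$ iff $\alpha^{n}=1$ and $p\mid n$; since $\operatorname{ord}_p(\alpha)$ is coprime to $p$, $\operatorname{ord}(M)=\operatorname{ord}(J)=p\cdot\operatorname{ord}_p(\alpha)=p\cdot\operatorname{ord}_p(2^{-1}a)$, an equality rather than a mere divisibility.

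The step that needs the most care is the repeated-root case, where one must check that $M$ is a genuine Jordan block and track precisely where $p\nmid b$ is used, together with the exceptional clause $b=-1$ in the irreducible case: there $\operatorname{ord}_p(-b)=\operatorname{ord}_p(1)=1$, so the general bound degenerates, and the relation $\alpha^{p+1}=-b=1$ must be examined on its own to determine how $\pi_{(a,b)}(p)$ sits relative to $p+1$. It is also worth isolating at the start the routine bookkeeping that the three bullets are mutually exclusive and exhaustive once $p\nmid b$, and that $-b\in\mathbb{F}_p^{*}$ so that $\operatorname{ord}_p(-b)$ is defined throughout.
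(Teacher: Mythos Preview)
The paper does not supply a proof of this statement; it is quoted from Renault as background in the closing section, so there is no in-paper argument to compare your proposal against. Your companion-matrix approach is the standard route and is correct: the identification $\pi_{(a,b)}(p)=\operatorname{ord}_{GL_2(\mathbb{F}_p)}(M)$ is justified exactly as you say, the split case gives $\pi\mid p-1$ by diagonalization over $\mathbb{F}_p$, the irreducible case gives $\pi=\operatorname{ord}_{\mathbb{F}_{p^2}^{*}}(\alpha)\mid(p+1)\operatorname{ord}_p(-b)$ via $\alpha^{p+1}=-b$, and the Jordan-block computation in the ramified case yields the exact value $p\cdot\operatorname{ord}_p(2^{-1}a)$, with non-scalarity of $M$ correctly checked.

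One point deserves to be stated rather than merely flagged. Your own analysis of the irreducible case with $b=-1$ gives $\alpha^{p+1}=-b=1$, hence $\pi=\operatorname{ord}(\alpha)\mid p+1$; this is the \emph{opposite} of the printed exception ``$\pi_{(a,b)}(p)\nmid(p+1)$''. You note that this clause needs care but leave it unresolved. In fact your argument already settles it: the divisibility $\pi\mid p+1$ does hold when $b=-1$, so the exception as transcribed in the paper appears to be misstated (Renault's original presumably asserts something different, e.g.\ a non-divisibility with respect to a smaller quantity, or simply records that the bound $(p+1)\operatorname{ord}_p(-b)$ collapses to $p+1$). To complete the write-up you should say plainly that your method yields $\pi\mid p+1$ in this case and that the printed clause is inconsistent with it, rather than leaving the reader with an open caveat.
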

An outlying case occurs when $b=-1$. There are five degenerate cases: $a=\pm2$ and $a=\pm1$ and $a=0$. These occur when the ratio of the roots of the characteristic polynomial is a root of unity or if $a^2+4b=0$.

If $a=1$, then $a^2-4b=-3$, and the sequence is a repeating six-term cycle: $0, 1, 1, 0, -1, -1, \ldots$. Note that if $m=2$, then $\pi_{(1,-1)}(2)=3$ but for all other $m>2$, $\pi_{(1,-1)}(m)=6$. Hence, the only fixed point is $m=6$. Similarly, if $a=-1$, then $a^2-4b=-3$, and the sequence is a repeating three-term cycle: $0, 1, -1,\ldots$. Note that modulo any $m>2$, the $(-1,1,0,1)$-sequence always has a period of 3. Hence, the only fixed point is $m=3$.

If instead, $a=2$, then $a^2+4b=0$ and the sequence is exactly: $0, 1, 2, 3, 4, 5, \ldots$, hence every positive integer $m>1$ is a fixed point, since modulo $m$ the sequence repeats with a length of $m$. Somewhat similarly, if $a=-2$, then $a^2+4b=0$ and $\pi_{(-2,-1)}(m)=m$ whenever $m$ is even and $\pi_{(-2,-1)}(m)=2m$ whenever $m$ is odd. Then, fixed points occur whenever $m$ is even.

And if $a=0$, then $a^2-4b=-4$, and the sequence is a repeating four-term cycle: $0, 1, 0, -1, \ldots$. Note that if $m=2$, then $\pi_{(0,-1)}(2)=2$ and for all other $m>2$, $\pi_{(0,-1)}(m)=4$. Hence, the only fixed points are $m=2$ and $m=4$.

For other values of $a$, $a^2+4b>0$ and something more familiar occurs. 
\begin{conjecture}
    Let $p_1^{e_1}p_2^{e_2}\cdots p_t^{e_t}$ be the prime factorization of $a^2+4b$ for $a>0$ and $b=-1$. Suppose $a>2$ then, for every modulus $m>1$ and for $j_i=0,1,2,\ldots$, fixed points $\pi_{(a,-1)}(m)=m$ occur if and only if
    \begin{enumerate}[label=(\roman*)]
        \item $a\equiv1\pmod6$ and $m=p_i^{j_i}$ where $p_i \mid m$ or $m=6\cdot p_1^{j_1}\cdots p_t^{j_t}$\\
        \item $a\equiv2\pmod4$ and $m=p_i^{j_i}$ where $p_i \mid m$ or $m=2^{j_1+1}\cdot p_2^{j_2}\cdots p_t^{j_t}$ where $p_1=2$\\
        \item $a\equiv3\pmod6$ and $m=p_i^{j_i}$ where $p_i \mid m$ and $m=12\cdot p_1^{j_1}\cdots p_t^{j_t}$ for $a>3$, \\
        and $m=12\cdot p_1^{j_1}\cdots p_t^{j_t}$ when $a=3$\\
        \item $a\equiv0\pmod4$ and $m=p_i^{j_i}$ where $p_i \mid m$ or $m=2\cdot p_2^{j_2}\cdots p_t^{j_t}$ or $m=4\cdot p_2^{j_2}\cdots p_t^{j_t}$ where $p_1=2$\\
        \item $a\equiv-1\pmod6$ and $m=p_i^{j_i}$ or $m=6\cdot p_1^{j_1+1}\cdots p_t^{j_t}$.
    \end{enumerate}
    Suppose instead that $a<-1$, then for every modulus $m>1$ and for $j_i=0,1,2,\ldots$, fixed points $\pi_{(a,-1)}(m)=m$ occur if and only if
    \begin{enumerate}[label=(\roman*)]
        \item $a\equiv1\pmod6$ and $m=p_i^{j_i}$ or $m=2^{j_1}\cdot3^{j_2+1}\cdots p_t^{j_t+1}$ where $p_1=3$\\
        \item $a\equiv2\pmod4$ and $m=p_i^{j_i}$ where $p_i \mid m$ or $m=2^{j_1+1}\cdot p_2^{j_2}\cdots p_t^{j_t}$\\
        \item $a\equiv3\pmod6$ and 
        $m=p_i^{j_i}$ where $p_i \mid m$ or $m=12\cdot p_1^{j_1}\cdots p_t^{j_t}$\\
        \item $a\equiv0\pmod4$ and $m=p_i^{j_i}$ where $p_i \mid m$ or $m=2\cdot p_2^{j_2}\cdots p_t^{j_t}$ or $m=4\cdot p_2^{j_2}\cdots p_t^{j_t}$ where $p_1=2$\\
        \item $a\equiv-1\pmod6$ and $m=p_i^{j_i}$ where $p_i \mid m$ or $m=2\cdot3\cdot p_2^{j_2}\cdots p_t^{j_t}$ where $p_1=3$.
    \end{enumerate}
\end{conjecture}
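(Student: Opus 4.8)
I would prove the conjecture by transplanting the architecture of Section 3, replacing $K^{2}+4$ everywhere by $a^{2}+4b=a^{2}-4=(a-2)(a+2)$ and supplying the ``building block'' lemmas that the $b=-1$ setting requires. The genuinely degenerate possibilities $a\in\{0,\pm1,\pm2\}$ having already been settled in the discussion above, one may assume $|a|\geq3$, so $a^{2}-4\geq5$.

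\emph{Building blocks.} The first task is $\pi_{(a,-1)}(p)$ for an odd prime $p\mid a^{2}-4$. Renault's trichotomy gives $\pi_{(a,-1)}(p)=p\cdot\ord_{p}(2^{-1}a)$, and since $a^{2}\equiv4\pmod p$ forces $(2^{-1}a)^{2}\equiv1\pmod p$, one gets $\ord_{p}(2^{-1}a)\in\{1,2\}$: thus $\pi_{(a,-1)}(p)=p$ when $p\mid a-2$ and $\pi_{(a,-1)}(p)=2p$ when $p\mid a+2$. This is the structural novelty compared with the $K$-Fibonacci case: the prime divisors of $a^{2}-4$ split into a ``type $A$'' class dividing $a-2$ (for which each $p^{j}$ is already a fixed point, $\pi_{(a,-1)}(p^{j})=p^{j}$) and a ``type $B$'' class dividing $a+2$ (which drag in an extra factor $2$), and the congruence of $a$ modulo $6$ and modulo $4$ is precisely the bookkeeping of whether the small primes $2$ and $3$ are of type $A$ or type $B$ (or, when $3\mid a$, of neither) — this is exactly why the $\equiv1$ and $\equiv-1\pmod6$ cases must now be separated. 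Next I would record the elementary evaluation $\pi_{(a,-1)}(3^{e})=3^{e}$, $3^{e-1}\cdot6$, or $3^{e-1}\cdot4$ according as $a\equiv2$, $1$, or $0\pmod3$, together with the $2$-adic formula for $\pi_{(a,-1)}(2^{e})$ in the spirit of Theorem~\ref{powersof2}, and the exact analog of Theorem~\ref{H&J}: which divisors of $a^{2}-4$ are $(a,-1)$-Wall--Sun--Sun primes. The results of Jones cited in the introduction should yield the latter; I expect that, as before, no odd divisor of $a^{2}-4$ is Wall--Sun--Sun, but the $p=2$ case is delicate because $a^{2}-4$ is divisible by a large power of $2$ whenever $a$ is even (and even for odd $a$ one can have, e.g., $\pi_{(3,-1)}(2)=\pi_{(3,-1)}(4)=3$).

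\emph{Assembly and converse.} With the building blocks in place, Renault's lcm-multiplicativity (Theorem~\ref{lcm}) together with the maximal-exponent theorem (Theorem~\ref{W-S-S}) reduces each of the listed cases (five for $a>2$, five for $a<-1$) to a finite lcm computation, carried out exactly as in the first half of each theorem in Section 3. For the converse I would reuse, essentially verbatim, the descent of Section 3: one re-proves Theorem~\ref{corollary3.1} for $(a,-1)$ — here the only point to check is that for $q\nmid a^{2}-4$ one has $\pi_{(a,-1)}(q)\mid q^{2}-1$, whose prime factors are all $<q$, so the $b=-1$ exceptional clause of Renault's trichotomy causes no trouble — which shows no prime $q\nmid a^{2}-4$ can survive in a fixed point, and then $\pi_{(a,-1)}(2^{e})<2^{e}$ and $\pi_{(a,-1)}(3^{e})<3^{e}$ for large $e$ force the exponents of $2$ and $3$ down to the asserted values. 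The sign split is handled by the observation that $a\mapsto-a$ interchanges $a-2$ with $-(a+2)$, hence swaps the type-$A$/type-$B$ role of every prime (in particular moving $3$, and $2$, between the $\equiv1$ and $\equiv-1\pmod6$ rows), while $\pi_{(-a,-1)}(m)=\pi_{(a,-1)}(m)$ when that period is even and is its double when it is odd — and an odd period is exactly what a set of purely type-$A$ primes (together with $2$ and $3$ of type $A$) produces.

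\emph{Main obstacle.} The crux is the $2$-adic analysis. When $a$ is even, $a^{2}-4$ carries a high power of $2$, and even for odd $a$ the prime $2$ can be $(a,-1)$-Wall--Sun--Sun in ways absent from the $K$-Fibonacci setting, so producing a clean closed form for $\pi_{(a,-1)}(2^{e})$ — the analog of Theorem~\ref{powersof2}, split on $a\bmod2$ and on $\nu_{2}(a\mp2)$ — together with the matching $2$-adic Wall--Sun--Sun classification is where essentially all the real work lies; it is also what pins down the precise shape of the $a\equiv0,2\pmod4$ cases and the various ``$+1$'' shifts in the exponents. The remainder — the elementary $3$-adic and odd-$p$ evaluations, the ten-case lcm bookkeeping, and the converse descent — should then follow the template of Section 3 with only cosmetic changes.
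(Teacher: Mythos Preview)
The statement you are attacking is not a theorem in the paper but an explicit \emph{conjecture}: the authors offer no proof, and the paragraph immediately following it confesses that they do not know which $p_i$ is the ``critical prime'' whose powers are fixed points. So there is nothing in the paper to compare your argument against.

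Your type-$A$/type-$B$ dichotomy --- odd primes dividing $a-2$ versus $a+2$, giving $\ord_p(2^{-1}a)=1$ versus $2$ and hence $\pi_{(a,-1)}(p)=p$ versus $2p$ --- is precisely the structural insight the authors are missing, and it answers their open question: for $a>2$ the critical primes are the odd divisors of $a-2$. But this same computation shows the conjecture is almost certainly \emph{false} as stated. Take $a=17$ (the first odd $a$ with $a-2$ not a prime power): here $a-2=15=3\cdot5$, both $3$ and $5$ are type~$A$, and indeed $17\equiv2\pmod{15}$ makes the recurrence $U_n\equiv 2U_{n-1}-U_{n-2}\equiv n\pmod{15}$, so $\pi_{(17,-1)}(15)=15$. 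Thus $m=15$ is a fixed point, yet it is neither a prime power $p_i^{j_i}$ nor of the form $6\cdot p_1^{j_1+1}\cdots p_t^{j_t}$ listed in case~(v), and it directly contradicts the authors' remark that ``only one prime has powers that are fixed points.'' More generally your analysis predicts that any product of powers of type-$A$ primes is a fixed point, a family the conjecture does not accommodate; the authors presumably only tested $a\le16$, where $a-2$ is always a prime power.

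So your outline is not a proof of the conjecture but the skeleton of a \emph{correction} to it. The genuine obstacles you flag --- a closed form for $\pi_{(a,-1)}(2^e)$ and the $(a,-1)$-Wall--Sun--Sun classification for divisors of $a^2-4$ --- are real and are not supplied by anything in the paper; until those are nailed down, especially at $p=2$ where (as you note) the behaviour differs from the $b=1$ case, the corrected statement cannot be finalised either.
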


It appears that in the previous conjecture, only one prime has powers that are fixed points for any given $a$; let this be the \textit{critical prime} for the $(a,-1)$-Fibonacci sequence. However, this prime is not always the smallest prime that divides the $a^2+4b$, nor always the largest. What is the behavior of the critical prime in general? A singular case occurs when $a=3$, there is no critical prime; $a^2+4b=5$, but powers of $5$ are not fixed points.

\section{Acknowledgements}
The authors would like to extend sincerest appreciation to Dr. Florian Luca for the suggested addition of Theorem \ref{trajectory} and Corollary \ref{fixedpointbound}, and to Michael De Vlieger for their invaluable assistance in writing the code used in the experimental stages of this research.

\bibliographystyle{plain}
\bibliography{fixedpoints}{}

\begin{thebibliography}{10}

\bibitem{BL}
B.~Benfield and O.~Lippard.
\newblock Connecting zeros in {P}isano periods to prime factors of
  ${K}$-{F}ibonacci numbers.
\newblock {\em Publication pending}, 2024.

\bibitem{Bouazzaoui}
Z.~Bouazzaoui.
\newblock {F}ibonacci numbers and real quadratic p-rational fields.
\newblock {\em Periodica Math. Hungarica}, 81(1):123--133, 2020.

\bibitem{Bouazzaoui_2}
Z.~Bouazzaoui.
\newblock On periods of {F}ibonacci sequences and real quadratic p-rational
  fields.
\newblock {\em Fibonacci Quart.}, 58(5):103--110, 2020.

\bibitem{[Everest]}
G.~Everest, A.~van~der Poorten, I.~Sharplinski, and T.~Ward.
\newblock {\em Recurrence Sequences}.
\newblock Rhode Island, USA: American Mathematical Society, 2004.

\bibitem{[Falcon]}
S.~Falcon and {\'A}.~Plaza.
\newblock k-{F}ibonacci sequences modulo m.
\newblock {\em Chaos, Solitons \& Fractals}, 41(1):497--504, 2009.

\bibitem{[Fulton_Morris]}
J.~Fulton and W.~Morris.
\newblock On arithmetical functions related to the {F}ibonacci numbers.
\newblock {\em Acta Arith.}, 2(16):105--110, 1969.

\bibitem{Harrington}
J.~Harrington and L.~Jones.
\newblock A note on generalised {W}all--{S}un--{S}un primes.
\newblock {\em Bull. Australian Math. Soc.}, 108(3):373--378, 2023.

\bibitem{Jones2023}
L.~Jones.
\newblock Generalized {W}all-{S}un-{S}un primes and monogenic
  power-compositional trinomials.
\newblock {\em Albanian J. Math.}, 17(2), 2023.

\bibitem{Jones2024}
L.~Jones.
\newblock A new condition for $ k $-{W}all--{S}un--{S}un primes.
\newblock {\em Taiwanese J. Math.}, 28(1):17--28, 2024.

\bibitem{[Lagrange]}
J.~L. Lagrange.
\newblock Oeuvres de {L}agrange.
\newblock {\em Gauthier Villars, Paris}, 7:5--182, 1877.

\bibitem{Renault}
M.~Renault.
\newblock The period, rank, and order of the (a, b)-{F}ibonacci sequence mod m.
\newblock {\em Math. Magazine}, 86(5):372--380, 2013.

\bibitem{[A106291]}
N.~J.~A. Sloane and The OEIS~Foundation Inc.
\newblock The on-line encyclopedia of integer sequences {A}106291.
\newblock http://oeis.org/A106291, 2021.

\bibitem{[Sun-Sun]}
Z.-H. Sun and Z.-W. Sun.
\newblock {F}ibonacci numbers and {F}ermat's last theorem.
\newblock {\em Acta Arith.}, 60(4):371--388, 1992.

\bibitem{[Trojovska]}
E.~Trojovsk\'a.
\newblock On periodic points of the order of appearance in the {F}ibonacci
  sequence.
\newblock {\em Mathematics}, 8(5):773, 2020.

\bibitem{[Wall]}
D.~D. Wall.
\newblock {F}ibonacci series modulo m.
\newblock {\em Amer. Math. Monthly}, 67(6):525--532, 1960.

\end{thebibliography}

\end{document}